\newtheorem{thm}{Theorem}[section]
\newtheorem{cor}[thm]{Corollary}
\newtheorem{lema}[thm]{Lemma}
\newtheorem{prop}[thm]{Proposition}
\theoremstyle{definition}
\newtheorem{rem}[thm]{Remark}
\numberwithin{equation}{section}
\newcommand{\R}{\mathbb R}
\newcommand{\N}{\mathbb N}
\newcommand{\W}{\mathcal{W}^{s,p}}
\newcommand{\ve}{\varepsilon}
\newcommand{\lam}{\lambda}
\newcommand{\lf}{(-\Delta)^s_p}
\newcommand{\nn}{\mathcal{N}_{s,p}}
\newcommand{\hh}{\mathcal{H}_{s,p}}
\newcommand{\C}{\mathcal{K}_{n,p}}
\newcommand{\cd}{\rightharpoonup}
\newcommand{\lp}{L^{p}(\Omega)}
\newcommand{\wsp}{W^{s,p}(\Omega)}
\begin{document}
\title[Fractional eigenvalue problems]{Fractional eigenvalue problems that approximate Steklov eigenvalues}
\author[L. M. Del Pezzo, J. D. Rossi, A. M. Salort]
{Leandro M. Del Pezzo, Julio D. Rossi,  Ariel M. Salort}
\address{Departamento de Matem\'atica
 \hfill\break \indent FCEN - Universidad de Buenos Aires and
 \hfill\break \indent   IMAS - CONICET.
\hfill\break \indent Ciudad Universitaria, Pabell\'on I \hfill\break \indent   (1428)
Av. Cantilo s/n. \hfill\break \indent Buenos Aires, Argentina.}
\email[A.M. Salort]{asalort@dm.uba.ar}
\email[L. M. Del Pezzo]{ldpezzo@dm.uba.ar}
\email[J. D. Rossi]{jrossi@dm.uba.ar}

%35P15  	Estimation of eigenvalues, upper and lower bounds
%35P30  	Nonlinear eigenvalue problems, nonlinear spectral theory
%35B27  	Homogenization; equations in media with periodic structure
%\subjclass[2010]{35B27, 35P15, 35P30, 34A08}

\keywords{}

\begin{abstract}
	In this paper we analyze possible extensions of the classical Steklov eigenvalue problem
	to the fractional setting. In particular, we find
	a nonlocal eigenvalue problem of fractional 
	type that approximate, when taking a suitable limit, 
	the classical Steklov eigenvalue problem.
\end{abstract}

\maketitle
%%%%%%%%%%%%%%%%%%%%%%%%%%%%%%%%%%%%%%%%%%%%%%%%%%%%%%%%%%%%%%%%%%%%%%%%%
\section{Introduction}
	
	Of crucial importance in the study of boundary value problems for
	differential operators are the Sobolev spaces and
	inequalities. Hence, the Sobolev inequalities and their optimal
	constants is a subject of interest in the analysis of PDE's and
	related topics. They have been widely studied in the past by many
	authors and is still an area of intensive research, see the book 
	\cite{A} and the survey \cite{DH} for an introduction to this field.

	When analyzing elliptic or parabolic problems with 
	nonlinear boundary conditions it turns out that among the Sobolev 
	embeddings, a fundamental role is played by the Sobolev trace 
	theorem. The study of the best constant in the Sobolev 
	trace theorem leads 
	naturally to eigenvalue problems known in the literature as
	Steklov eigenvalues.
	
	\medskip
		
	Our main goal in this paper is to analyze a fractional
	approximation for Steklov eigenvalues.
	Given a bounded domain $\Omega\subset \R^n,$ $s\in(0,1)$
	and $p\in(1,\infty),$  we are aimed at studying the following nonlocal problem
	\begin{align} \label{ecu}
		\begin{cases}
		  \C(1-s)\lf u + |u|^{p-2}u=\dfrac{\lam}\ve\chi_{ \Omega_\ve}|u|^{p-2}u 
		  &\quad \mbox{ in } \Omega,  \\
	  	  \nn u=0 &\quad\mbox{ in }  \Omega^c=\R^n \setminus \overline\Omega,
		\end{cases}
	\end{align}
	where $s$ and $\ve$ are real numbers belonging to $(0,1)$ and  $\Omega_\ve\coloneqq \{x\in \Omega \colon d(x,\Omega)\leq \ve\}$. 
	The fractional $p-$Laplacian is defined as
		$$
		  \lf u (x)=  2\,\mbox{p.v.} \int_{\R^n} \frac{|u(x)-u(y)|^{p-2}(u(x)-u(y))}{|x-y|^{n+sp}}\, dy,
		$$
	and $\nn$ is the associated nonlocal derivative defined in \cite{DROV}  by
	\begin{equation} \label{deriv}
		\nn u (x)\coloneqq 2\, \int_\Omega \frac{|u(x)-u(y)|^{p-2}(u(x)-u(y))}{|x-y|^{n+sp}} \, dy, \qquad 
		x\in \R^n \setminus \overline\Omega.
	\end{equation}
	The constant $\C$ is the normalization constant computed in \cite{BBM}. In fact,
	although the fractional seminorm $[u]_{s,p}\to \infty$ as $s\to 1^-$, Bourgain, Brezis and Mironescu in \cite{BBM} proved that for any smooth bounded domain $\Omega\subset \R^n$, $u\in W^{1,p}(\Omega)$ with $p\in(1,\infty)$ there exists a constant $\C$ such that
	\begin{equation} \label{lim.grad}
		\lim_{s\to 1^-}\C (1-s)\iint_{ \Omega \times \Omega} \frac{|u(x)-u(y)|^p}{|x-y|^{n+sp}}dx dy=\int_\Omega |\nabla u|^p\,dx.
	\end{equation}
	The constant can be explicitly computed and is given by 
	\[
		\C=\frac{p\Gamma(\frac{n+p}{2})}
		{2\pi^\frac{n-1}{2}\Gamma(\frac{p+1}{2})}.
	\]
	
	As the authors of \cite{DROV} pointed out, one of the main advantages in using this form of nonlocal derivative arises in the following nonlocal divergence theorem: for any bounded smooth enough functions 
	$u$ and $v$  it holds that
	\begin{equation} \label{teo.div}
		\int_\Omega \lf u (x) \, dx = - 
		\int_{\Omega^c} \nn u(x)\, dx.
	\end{equation}
	Moreover, the following integration by parts formula is true
	\begin{equation} \label{partes}
		  \hh (u,v)=\int_\Omega v(x) \lf u(x)
		\, dx + \int_{\Omega^c} v(x) \nn u(x)\, dx,
	\end{equation}	
	 where 
	$$
		\hh(u,v):= 
		\iint_{\R^{2n} \setminus (\Omega^c)^2} 
		\frac{|u(x)-u(y)|^{p-2}(u(x)-u(y))(v(x)-v(y))}{|x-y|^{n+sp}} 
		\, dx\, dy.
	$$
	
	By multiplying \eqref{ecu} by bounded smooth enough function  
	$v$, integrating in $ \Omega$ and by using \eqref{partes} we obtain 
	the following weak formulation for \eqref{ecu}
	\begin{equation} \label{debil}
		\C(1-s) \hh(u,v)   + \int_{\Omega} |u|^{p-2}uv
		\, dx = \dfrac{\lam}{\ve} 
		\int_{ \Omega_\ve} |u|^{p-2}uv\, dx.
	\end{equation}
	We introduce some notation we will use along the paper.	Given a measurable function $u:\R^n \to \R$ we set
	$$
		\|u\|_{s,p}:=(\|u\|_{L^p(\Omega)}^p +[u]_{s,p}^p )^\frac{1}{p}, \qquad \mbox{ where} \quad [u]_{s,p}:= \left(\hh(u,u) \right)^\frac{1}{p}.
	$$	
	Associated with this norm the natural space to consider is the following 
	$$
		\W(\Omega) := \{ u \colon \mathbb{R}^n\to \mathbb{R}
		\text{ measurable } \colon \, \|u\|_{s,p}<
		\infty\}.
	$$
	For a fixed value $\ve>0,$ we say that the value 
	$\lambda\in \mathbb{R}$ is an eigenvalue of problem 
	\eqref{ecu} if there is $u\in \W(\Omega)$
	such that \eqref{debil} holds	for any $v\in \W(\Omega)$.	Note that if $\lambda\in \mathbb{R}$ is an eigenvalue of problem 
	\eqref{ecu} and $u$ is an associated eigenfunction, 
	then $\lambda>0$ and $u\not \equiv0$ in $\Omega_\ve.$ 
	Thus the first eigenvalue of \eqref{ecu} is given by
	\begin{equation} \label{lam1}
			\lam_{1,\ve}(s,p)=
			\inf_{\substack{u\in \W(\Omega),\\ 
			\|u\|^p_{L^p(\Omega_\ve )}\neq0} }
			\frac{\C(1-s)[u]_{s,p}^p + 
			\|u\|^p_{L^p(\Omega)} }{ \dfrac{1}{\ve}
			\|u\|^p_{L^p( \Omega_\ve ) }}.
	\end{equation}
	
	 Recall that it is well-known that the first eigenvalue of the Steklov problem
	\begin{align} \label{ecu2}
	\begin{cases}
		-\Delta_p u + |u|^{p-2}u = 0 &\mbox{ in } \Omega,\\
		|\nabla u|^{p-2} \frac{\partial u}{\partial \nu} = \lam |u|^{p-2}u &\mbox{ on } \partial \Omega,
	\end{cases}
	\end{align}
	is given by
	\begin{equation} \label{lam2}
		\lam_1(p)=\inf_{\substack{u\in W^{1,p}(\Omega),\\ 
		u\neq0} } \frac{\|\nabla u\|_{L^p(\Omega)}^p+ 
		\|u\|_{L^p(\Omega)}^p}{ \|u\|_{L^p(\partial\Omega)}^p }.
	\end{equation}
	Here the $p-$Laplacian is defined as $\Delta_p u = div(|\nabla 
	u|^{p-2} \nabla u)$ for $p\in (1,\infty)$.
	
	Taking $\epsilon = 1-s$, we are interested in studying the behavior of 
	$\lam_{1,1-s}(s,p)$ as $s \to 1^-$. Intuitively, a connection between 
	the limit of such eigenvalue and  $\lam_1(p)$, the first eigenvalue of 
	the Steklov $p-$Laplacian in $\Omega$, is expected to be found. 
	Indeed, note that from \eqref{lim.grad} one has that, for a fixed $u$,
	$$
		\lim_{s\to 1^{-}} \C(1-s)[u]_{s,p\,\Omega}^p + 
			\|u\|^p_{L^p(\Omega)}  = \|\nabla u\|_{L^p(\Omega)}^p+ 
			\|u\|_{L^p(\Omega)}^p;
	$$	
	and, moreover, since $\Omega_\ve\coloneqq \{x\in \Omega \colon 
	d(x,\Omega)\leq \ve\}$ is a strip around the boundary $\partial 
	\Omega$ of size $|\Omega_\ve | \sim \ve \times |\partial \Omega|$ one 
	expects that
	$$
		\lim_{s\to 1^-} \dfrac{1}{1-s} \int_{\Omega_{1-s}}
		|u|^p \, dx = \int_{\partial \Omega} |u|^p d\sigma.
	$$
	Note that the choice $\ve = 1-s$ is precise for this limit to hold.

	Our main results can be summarized as follows.

	\begin{thm}  \label{teo.1}
		There exists a
		sequence of eigenvalues of \eqref{ecu} $\lambda_{k,\epsilon} 
		(s,p)$ such that $\lambda_{k,\epsilon} (s,p)\to+\infty$ 
		as $k\to+\infty.$
 Every eigenfunction of \eqref{ecu} is in $L^{\infty}(\Omega).$
		
		The first eigenvalue $\lam_{1,\epsilon} (s,p)$ of \eqref{ecu} is 
		isolated and simple and has eigenfunctions that do not change sign.
		
		Moreover, choosing $\epsilon = 1-s$, we have the convergence of the first eigenvalue to the first Steklov eigenvalue as $s\to 1^-$, that is,
		$$
			\lim_{s\to 1^-} \lam_{1,1-s} (s,p)= \lam_1(p).
		$$	
	\end{thm}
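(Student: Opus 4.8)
The plan is to establish the four assertions in turn: the discrete spectrum by Ljusternik--Schnirelmann theory, the $L^\infty$ bound by a De Giorgi/Moser iteration, the simplicity, isolation and sign of $\lam_{1,\ve}(s,p)$ by a hidden--convexity argument together with a minimum principle, and finally the convergence as $s\to1^-$ by a $\Gamma$-convergence--type analysis; the last point is the genuinely new one and is where I expect the real work. For the discrete spectrum I would work on $\mathcal M:=\{u\in\W(\Omega):\ \tfrac{1}{\ve}\|u\|^p_{L^p(\Omega_\ve)}=1\}$ with the even $C^1$ functional $J(u):=\C(1-s)[u]_{s,p}^p+\|u\|^p_{L^p(\Omega)}$, so that \eqref{lam1} reads $\lam_{1,\ve}(s,p)=\min_{\mathcal M}J$. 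The structural fact that makes everything work is that $\W(\Omega)\hookrightarrow L^p(\Omega_\ve)$ is compact: since $\hh(u,u)\ge[u]^p_{W^{s,p}(\Omega)}$, the norm $\|\cdot\|_{s,p}$ controls the $W^{s,p}(\Omega)$ norm, so $\W(\Omega)$ embeds compactly in $L^p(\Omega)$ and a fortiori in $L^p(\Omega_\ve)$; consequently $J$ is coercive on $\mathcal M$, bounded below by a positive constant, and satisfies Palais--Smale there (and a minimizer in \eqref{lam1} exists). Setting $\lam_{k,\ve}(s,p):=\inf_K\sup_{u\in K}J(u)$ over symmetric compact $K\subset\mathcal M$ of Krasnoselskii genus at least $k$, Ljusternik--Schnirelmann theory produces a sequence of eigenvalues of \eqref{ecu}, and $\lam_{k,\ve}(s,p)\to+\infty$ because sublevel sets of $J$ on $\mathcal M$ have finite genus; for $k=1$ this recovers \eqref{lam1}.

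Next I would treat the boundedness and the structure of the first eigenvalue. Writing \eqref{ecu} as $\C(1-s)\lf u=f$ in $\Omega$ with $|f|\le(1+\tfrac{\lam}{\ve})|u|^{p-1}$ and $\nn u=0$ in $\Omega^c$, a Caccioppoli inequality followed by a De Giorgi/Moser iteration adapted to $\lf$ and to the nonlocal derivative \eqref{deriv} (cf.\ \cite{DROV} and the references therein) gives $u\in L^\infty(\Omega)$ for every eigenfunction. Since $[\,|u|\,]_{s,p}\le[u]_{s,p}$ while the remaining terms in \eqref{lam1} are unchanged, $|u|$ is a minimizer whenever $u$ is; a strong minimum principle (Harnack inequality for $\lf$) then upgrades $|u|\ge0$ to $|u|>0$ in $\Omega$, so first eigenfunctions have constant sign. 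Simplicity I would obtain from the hidden--convexity (D\'iaz--Sa\'a/Lindqvist) argument transplanted to the form $\hh$: for two positive, $L^p(\Omega_\ve)$--normalized first eigenfunctions $u,v$, the curve $\sigma_t=((1-t)u^p+tv^p)^{1/p}$ makes $t\mapsto\hh(\sigma_t,\sigma_t)$ convex, and the equality case of the underlying pointwise inequality forces $u\equiv cv$. For isolation, if there were eigenvalues $\lam_k\downarrow\lam_{1,\ve}(s,p)$ with $\lam_k>\lam_{1,\ve}(s,p)$, their $L^p(\Omega_\ve)$--normalized eigenfunctions would be bounded in $\W(\Omega)$ and hence, by the compact embedding, converge strongly in $L^p(\Omega_\ve)$ and a.e.\ to a first eigenfunction, which is one-signed; but each $\lam_k$--eigenfunction changes sign and, testing its equation, has $|\{u_k>0\}|$ and $|\{u_k<0\}|$ bounded below by a positive constant, contradicting the a.e.\ convergence.

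For the convergence I would fix $\ve=1-s$ and prove $\limsup_{s\to1^-}\lam_{1,1-s}(s,p)\le\lam_1(p)$ and $\liminf_{s\to1^-}\lam_{1,1-s}(s,p)\ge\lam_1(p)$ separately. For the upper bound, take $u\in C^\infty(\overline\Omega)$ with $\|u\|_{L^p(\partial\Omega)}\neq0$, extend it to $\bar u\in C^\infty_c(\R^n)$, and use $\bar u$ as a competitor in \eqref{lam1}. From $[\bar u]_{s,p}^p=[\bar u]^p_{W^{s,p}(\R^n)}-[\bar u]^p_{W^{s,p}(\Omega^c)}$ and two applications of \eqref{lim.grad}, one on $\R^n$ and one on the exterior domain $\Omega^c$ (the pieces of the Gagliardo energy involving the far field of $\bar u$ contributing terms that vanish as $s\to1^-$), one gets $\C(1-s)[\bar u]_{s,p}^p\to\|\nabla u\|^p_{L^p(\Omega)}$; moreover $\|\bar u\|_{L^p(\Omega)}=\|u\|_{L^p(\Omega)}$, and the coarea formula in a tubular neighbourhood of $\partial\Omega$ gives $\tfrac{1}{1-s}\|\bar u\|^p_{L^p(\Omega_{1-s})}\to\|u\|^p_{L^p(\partial\Omega)}$. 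Hence $\limsup_{s\to1^-}\lam_{1,1-s}(s,p)\le\bigl(\|\nabla u\|^p_{L^p(\Omega)}+\|u\|^p_{L^p(\Omega)}\bigr)/\|u\|^p_{L^p(\partial\Omega)}$, and since $C^\infty(\overline\Omega)$ is dense in $\wup$ and the trace operator is continuous, taking the infimum over such $u$ gives the bound by $\lam_1(p)$ (see \eqref{lam2}).

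The lower bound is where I expect the main difficulty. Let $s_j\to1^-$ realize $\liminf_{s\to1^-}\lam_{1,1-s}(s,p)$ and let $u_j$ be a minimizer of \eqref{lam1} for $s=s_j$, $\ve=1-s_j$, normalized by $\tfrac{1}{1-s_j}\|u_j\|^p_{L^p(\Omega_{1-s_j})}=1$; then $\C(1-s_j)[u_j]_{s_j,p}^p+\|u_j\|^p_{L^p(\Omega)}=\lam_{1,1-s_j}(s_j,p)$, which stays bounded by the upper bound just proved. Since $\hh(u_j,u_j)\ge[u_j]^p_{W^{s_j,p}(\Omega)}$, the Bourgain--Brezis--Mironescu compactness theorem provides, along a subsequence, $u_j\to u$ strongly in $L^p(\Omega)$ and a.e., with $u\in\wup$ and $\|\nabla u\|^p_{L^p(\Omega)}\le\liminf_j\C(1-s_j)[u_j]_{s_j,p}^p$; consequently $\liminf_j\lam_{1,1-s_j}(s_j,p)\ge\|\nabla u\|^p_{L^p(\Omega)}+\|u\|^p_{L^p(\Omega)}$. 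It then suffices to show $\|u\|^p_{L^p(\partial\Omega)}\ge1$, for then $\|\nabla u\|^p_{L^p(\Omega)}+\|u\|^p_{L^p(\Omega)}\ge\lam_1(p)\|u\|^p_{L^p(\partial\Omega)}\ge\lam_1(p)$, and the two bounds combine to $\lim_{s\to1^-}\lam_{1,1-s}(s,p)=\lam_1(p)$. This identification of the boundary mass is the crux: writing $\tfrac{1}{1-s_j}\|u_j\|^p_{L^p(\Omega_{1-s_j})}-\tfrac{1}{1-s_j}\|u\|^p_{L^p(\Omega_{1-s_j})}$, the subtracted term tends to $\|u\|^p_{L^p(\partial\Omega)}$ because $u\in\wup$, so one must show this difference vanishes, for which I would need a trace estimate \emph{uniform in $s$}---an inequality of the form $\tfrac{1}{1-s}\|w\|^p_{L^p(\Omega_{1-s})}\le C\bigl(\|w\|^p_{L^p(\Omega)}+\C(1-s)[w]_{s,p}^p\bigr)$, refined so that the left-hand side is $o(1)$ when $\|w\|_{L^p(\Omega)}\to0$ while $(1-s)[w]_{s,p}^p$ stays bounded---applied to $w=u_j-u$ (whose rescaled seminorm is bounded, since $(1-s_j)[u]_{s_j,p}^p\to\|\nabla u\|_{L^p(\Omega)}^p$ by \eqref{lim.grad}). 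I would prove such an estimate by localizing with a finite cover of $\partial\Omega$ by boundary balls, flattening the boundary, and reducing to a one-dimensional computation in the normal variable in which the weight $(1-s)|x-y|^{-n-sp}$ plays exactly the role it plays in \eqref{lim.grad}: for $x$ in the collar one controls $|w(x)|^p$ by the values of $w$ at depth $\sim\!1-s$ plus an increment bounded by the Gagliardo integrand integrated over a cone of outward directions, and then integrates over the collar, the factor $1-s$ compensating the blow-up of the kernel. Granting this, passing to the limit gives $1=\lim_j\tfrac{1}{1-s_j}\|u_j\|^p_{L^p(\Omega_{1-s_j})}=\|u\|^p_{L^p(\partial\Omega)}$, which closes the argument.
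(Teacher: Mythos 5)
Your architecture matches the paper's for most of the statement: Ljusternik--Schnirelmann theory on the $L^p(\Omega_\ve)$-normalized manifold for the sequence of eigenvalues, a De Giorgi-type iteration for the $L^\infty$ bound, a minimum principle for the constant sign, and isolation by contradiction using a lower bound on the measure of the nodal sets of sign-changing eigenfunctions; the upper bound $\limsup_{s\to1^-}\lam_{1,1-s}(s,p)\le\lam_1(p)$ is also obtained exactly as in the paper (extension to $\R^n$, Bourgain--Brezis--Mironescu applied on a large ball and on the pieces of the decomposition of $[Eu]_{s,p}^p$, coarea formula for the strip integral). Two places differ genuinely. For simplicity you use hidden convexity along $\sigma_t=((1-t)u^p+tv^p)^{1/p}$, while the paper uses the discrete Picone inequality with the regularization $v_k=v+1/k$; these are essentially equivalent (the Picone inequality is the pointwise form of the convexity), but note that since the energy is $\hh$, integrated over $\R^{2n}\setminus(\Omega^c)^2$, you need positivity of the eigenfunctions a.e.\ in $\R^n$, not merely in $\Omega$ --- the paper's minimum principle is stated on all connected components of $\R^n\setminus\partial\Omega$ precisely for this reason. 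The more substantial divergence is at what you correctly identify as the crux of the lower bound: showing that the normalized strip masses $\tfrac1{1-s_j}\|u_j\|^p_{L^p(\Omega_{1-s_j})}=1$ pass to the limit $\|u\|^p_{L^p(\partial\Omega)}$. You propose an interpolation-type trace inequality, uniform in $s$, of the form $\tfrac{1}{1-s}\|w\|^p_{L^p(\Omega_{1-s})}\le\delta\,\C(1-s)[w]_{s,p}^p+C_\delta\|w\|^p_{L^p(\Omega)}$, to be proved by localization and flattening; this is plausible and would suffice (applied to $w=u_j-u$, whose scaled seminorm is bounded), but it is the hardest analytic ingredient of your argument and you leave it as a sketch. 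The paper bypasses it: the Bourgain--Brezis--Mironescu compactness theorem gives not only $L^p$ convergence but strong convergence of $u_j$ to $u$ in $W^{t,p}(\Omega)$ for every fixed $t<1$, in particular for some fixed $t>1/p$; then the ordinary trace embedding $W^{t,p}(\Omega)\hookrightarrow L^p(\partial\Omega_r)$, with constant uniform over the parallel hypersurfaces $\partial\Omega_r$ for small $r$, combined with the coarea formula, yields $\tfrac1{1-s_j}\int_{\Omega_{1-s_j}}|u_j|^p\,dx\to\int_{\partial\Omega}|u|^p\,dS$ directly. Your route, if the uniform estimate is carried out, is more self-contained and in fact yields an independent scaled trace inequality; the paper's route is shorter because it offloads all the uniformity onto the fixed-order trace theorem.
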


	\begin{rem}
		It seems natural to consider 
		\begin{align} \label{ecu.33}
			\begin{cases}
				  \C(1-s)\lf u + |u|^{p-2}u=0 
				  &\quad \mbox{ in } \Omega,  \\
			  	  \nn u=\lambda |u|^{p-2}u &\quad\mbox{ in }  \Omega^c.
			\end{cases}
		\end{align}
		Associated with the first eigenvalue in this problem one has the 
		following minimization problem
		\begin{equation} \label{lam1.22}
			\tilde{\lam}_1(s,p)=
			\inf_{\substack{u\in \W(\Omega),\\ 
			\|u\|^p_{L^p(\Omega^c )}\neq0} }
			\frac{\C(1-s)[u]_{s,p\,\Omega}^p + 
			\|u\|^p_{L^p(\Omega)} }{ 
			\|u\|^p_{L^p( \Omega^c ) }}.
		\end{equation}
		However, this idea gives 
		$$
			\tilde{\lam}_1(s,p) =0
		$$	
		as can be easily obtained just by considering as a minimizing 
		sequence $u_k (x) = \phi (x+ ke_1)$ with $\phi $ a $C^\infty$ 
		compactly supported profile. 
	\end{rem}
	
	\begin{rem} \label{rem.trezas} {\rm
	When a trace embedding theorem holds (that is, for $ps>1$) we can consider the best fractional Sobolev trace constant that is given by
	\begin{equation} \label{trazas}
	\Lambda_1 (s,p) = \inf_{\begin{array}{c}
	u\in W^{s,p} (\Omega) \\
	u|_{\partial \Omega} \not\equiv 0 
	\end{array}} \dfrac{\displaystyle \C (1-s)\iint_{ \Omega \times \Omega} \frac{|u(x)-u(y)|^p}{|x-y|^{n+sp}}dxdy + \|u\|_{L^p(\Omega)}^p }
	{\displaystyle \int_{\partial\Omega} |u|^p\,d\sigma}.
	\end{equation}
	Thanks to the compactness of the embedding $W^{s,p} (\Omega) \hookrightarrow L^p (\partial \Omega)$ this infimum is attained and the minimizers are solutions to
	$$
	\begin{array}{l}
	\displaystyle
	\C(1-s) \iint_{\Omega\times \Omega} \frac{|u(x)-u(y)|^{p-2} (u(x)-u(y)) (v(x)-v(y))}{|x-y|^{n+sp}}dxdy  \\[10pt]
		\qquad \displaystyle  + \int_{\Omega} |u|^{p-2}uv
		\, dx = \Lambda_1(s,p) 
		\int_{\partial \Omega} |u|^{p-2}uv\, dx,
		\end{array}
	$$
for every $v\in W^{s,p} (\Omega)$.
Note that with this formulation it is not clear how to identify the ``boundary condition" satisfied 
by a minimizer $u$ (the equation inside the domain reads as 
$$ 
\C(1-s)\int_\Omega 
\frac{|u(x)-u(y)|^{p-2} (u(x)-u(y))}{|x-y|^{n+sp}}dy + |u|^{p-2}u (x) =0 
$$ for $x\in \Omega$). This is why we choose to analyze
\eqref{lam1} (that has \eqref{ecu} as associated PDE problem) instead of \eqref{trazas}.

With the same ideas used in the study of the limit as $s\to 1^-$ in Theorem \ref{teo.1} (see Section \ref{sect-limite}) one can show that
$$
\lim_{s\to 1^-} \Lambda_{1} (s,p)= \lam_1(p).
$$
We leave the details to the reader.}
\end{rem}

The paper is organized as follows: In Section \ref{Prel} we gather some preliminary results, in particular we show a minimum principle for our problem; in Section \ref{primer.atuvalor} we deal with the eigenvalue problem \eqref{ecu} and prove the first part of Theorem \ref{teo.1}; finally, in Section \ref{sect-limite} we analyze the limit
as $s\to 1^-$.

%%%%%%%%%%%%%%%%%%%%%%%%%%%%%%%%%%%%%%%%%%%%%%%%%%%%%%%%%%%%%%%%%%%%%%%%%%%
\section*{Acknowledgements}
We want to thank Ricardo Duran for several interesting discussions.
%%%%%%%%%%%%%%%%%%%%%%%%%%%%%%%%%%%%%%%%%%%%%%%%%%%%%%%%%%%%%%%%%%%%%%%%%%% 
\section{Preliminaries}\label{Prel}

	We denote the usual fractional Sobolev spaces by $W^{s,p}(\Omega)$ for 
	$p\in [1,\infty)$ and $s\in(0,1)$   endowed with the norm
	\[
		\|u\|^p_{W^{s,p}(\Omega)}\coloneqq
		  \|u\|_{L^p(\Omega)}^p + 
		  \iint_{\Omega^2} \frac{|u(x)-u(y)|^p}{|x-y|^{n+sp}} \, 
		  dx \,dy.
	\]
	In the following,  $|u|_{\wsp}$ denotes usual Gagliardo seminorm 
	defined as
	\[
		|u|_{W^{s,p}(\Omega)}\coloneqq 
			\left(\displaystyle\iint_{\Omega^2} 
			\dfrac{|u(x)-u(y)|^p}{|x-y|^{n+sp}} \, dx \,dy\right)^{\frac1p} 
				\]
				for $1\le p<\infty$.
	It is easy to check that $\W(\Omega)$  is a subset of  
	$W^{s,p}(\Omega)$ for all $s\in (0,1)$.
	
	It will be quite useful here to establish the fractional compact 
	embeddings. For the proof see \cite{DD}.
	\begin{thm}\label{teo:inclucomp} 
		Let $\Omega\subset\mathbb{R}^n$
		a bounded open set with Lipschitz boundary, $s\in(0,1)$ and 
		$p\in(1,\infty).$ Then we have the following compact embeddings:
		\begin{align*}
			&W^{s,p}(\Omega)\hookrightarrow L^q(\Omega) 
			\qquad \mbox{ for all } q\in[1,p_s^\star), &\mbox{ if } sp\le n;\\
			&W^{s,p}(\Omega)\hookrightarrow C^{0,\lambda}_b(\Omega) 
			\quad \mbox{ for all } 
			\lambda<s-\nicefrac{n}{p}, &\mbox{ if } sp>n.
		\end{align*}
		Where $p_s^\star$ is the fractional critical Sobolev exponent, 
		that is
		\[
			p_s^\star\coloneqq
			\begin{cases}
			\dfrac{np}{n-sp} &\text{ if } sp<n,\\
			\infty  &\text{ if } sp\ge n.\\
			\end{cases}
		\]
	\end{thm}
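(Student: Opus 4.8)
The plan is to push the problem to the whole space $\R^n$ by an extension argument and then invoke two classical compactness criteria, treating the ranges $sp\le n$ and $sp>n$ separately. Since $\partial\Omega$ is Lipschitz, there is a bounded linear extension operator $W^{s,p}(\Omega)\to W^{s,p}(\R^n)$; composing it with multiplication by a fixed cut-off $\eta\in C_c^\infty(\R^n)$ equal to $1$ on $\overline\Omega$, one obtains a bounded operator $T\colon W^{s,p}(\Omega)\to W^{s,p}(\R^n)$ with $(Tu)|_\Omega=u$ and with image consisting of functions supported in a fixed ball. Combining $T$ with the fractional Sobolev inequality on $\R^n$ (namely $\|v\|_{L^{p_s^\star}(\R^n)}\le C[v]_{W^{s,p}(\R^n)}$ when $sp<n$, and $W^{s,p}(\R^n)\hookrightarrow L^r(\R^n)$ for every finite $r\ge p$ when $sp=n$) gives the \emph{continuous} embeddings $W^{s,p}(\Omega)\hookrightarrow L^q(\Omega)$ for all $q\in[1,p_s^\star)$; combining it instead with the fractional Morrey inequality when $sp>n$ gives the continuous embedding $W^{s,p}(\Omega)\hookrightarrow C^{0,\alpha}_b(\Omega)$ with $\alpha:=s-n/p$. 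I regard this first step as the main obstacle: the existence of the extension operator is exactly where the Lipschitz regularity of $\partial\Omega$ enters, and it, together with the Sobolev and Morrey inequalities on $\R^n$, is the nontrivial analytic input (all of it classical, with full details in \cite{DD}). What remains is to upgrade continuity to compactness.

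For $sp\le n$, let $\{u_k\}$ be bounded in $W^{s,p}(\Omega)$ and set $f_k:=Tu_k$, which is bounded in $W^{s,p}(\R^n)$, supported in a fixed ball, and, by the embeddings above, bounded in $L^r(\R^n)$ for some finite $r>q$. It suffices to show that $\{f_k\}$ is precompact in $L^1(\R^n)$: the interpolation inequality $\|g\|_{L^q}\le\|g\|_{L^1}^{\theta}\|g\|_{L^{r}}^{1-\theta}$ then turns $L^1$-convergence of a subsequence into $L^q$-convergence. Precompactness in $L^1$ I would obtain from the Riesz--Fr\'echet--Kolmogorov compactness theorem; the only delicate point, uniform decay of the $L^1$ modulus of continuity, I would handle by mollification. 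Writing $f_k=(f_k-f_k*\rho_\delta)+f_k*\rho_\delta$ with $\rho_\delta$ a standard mollifier, for each fixed $\delta$ the family $\{f_k*\rho_\delta\}_k$ is uniformly bounded in $C^1$ and supported in a fixed bounded set, hence precompact in $L^1(\R^n)$ by Arzel\`a--Ascoli; on the other hand, the elementary bound $\rho_\delta(y)\le\|\rho\|_{L^\infty}\,\delta^{sp}|y|^{-n-sp}$ for $|y|\le\delta$, combined with Jensen's and H\"older's inequalities, yields
\[
\|f_k-f_k*\rho_\delta\|_{L^1(\R^n)}\le C\,\delta^{s}\Big(\iint_{|x-y|\le\delta}\frac{|f_k(x)-f_k(y)|^p}{|x-y|^{n+sp}}\,dx\,dy\Big)^{1/p}\le C\,\delta^{s}[f_k]_{W^{s,p}(\R^n)}\le C\,\delta^{s},
\]
uniformly in $k$. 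A routine $\varepsilon/3$ argument then shows that $\{f_k\}$ is totally bounded, hence precompact, in $L^1(\R^n)$, which settles this case.

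For $sp>n$, a bounded sequence $\{u_k\}$ in $W^{s,p}(\Omega)$ is bounded in $C^{0,\alpha}_b(\overline\Omega)$ by the continuous Morrey embedding, hence uniformly bounded and equicontinuous; by Arzel\`a--Ascoli a subsequence converges in $C^0(\overline\Omega)$. To upgrade this to convergence in $C^{0,\lambda}_b(\overline\Omega)$ for arbitrary $\lambda<\alpha$, I would apply the elementary interpolation inequality
\[
[g]_{C^{0,\lambda}}\le 2^{1-\lambda/\alpha}\,[g]_{C^{0,\alpha}}^{\lambda/\alpha}\,\|g\|_{L^\infty}^{1-\lambda/\alpha}
\]
to the differences of the subsequence: their $C^{0,\alpha}$-seminorms stay bounded while their sup-norms tend to $0$, so they form a Cauchy sequence in the complete space $C^{0,\lambda}_b(\overline\Omega)$ and therefore converge there. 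This gives the claimed compact embedding, and the proof is complete.
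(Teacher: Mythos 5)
Your argument is correct, but there is nothing in the paper to compare it with: for this theorem the authors give no proof at all and simply refer to the book \cite{DD}, so your proposal supplies a proof where the paper delegates everything to the literature. Your route is the standard one and it is sound: extend by a bounded operator $W^{s,p}(\Omega)\to W^{s,p}(\R^n)$ (this is exactly where the Lipschitz hypothesis is used) and cut off so that the extended family has a fixed compact support; for $sp\le n$ prove total boundedness in $L^1$ by mollification and conclude by interpolation with the uniform $L^{r}$ bound ($r=p_s^\star$ if $sp<n$, any finite $r>q$ if $sp=n$); for $sp>n$ use Arzel\`a--Ascoli together with the interpolation $[g]_{C^{0,\lambda}}\le C[g]_{C^{0,\alpha}}^{\lambda/\alpha}\|g\|_{\infty}^{1-\lambda/\alpha}$, $\alpha=s-\nicefrac{n}{p}$. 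The one step worth spelling out is the mollification estimate: as written, the pointwise bound $\rho_\delta(y)\le\|\rho\|_{\infty}\delta^{sp}|y|^{-n-sp}$ must be combined with Jensen's inequality in the $y$-variable (convexity of $t\mapsto t^p$ against the probability measure $\rho_\delta\,dy$), giving $\|f_k-f_k*\rho_\delta\|_{L^p}\le C\delta^{s}[f_k]_{W^{s,p}(\R^n)}$, and only then with H\"older on the fixed compact support to pass to the $L^1$ norm; a direct H\"older estimate on the double integral with exponent $p$ on the difference quotient would produce a divergent factor $\iint_{|x-y|\le\delta}|x-y|^{-n-sp}\,dx\,dy$. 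Since you invoke both Jensen and H\"older, the intended chain is the correct one. The remaining inputs (the extension theorem for Lipschitz domains and the fractional Sobolev and Morrey inequalities) are classical and are precisely what the paper's citation covers, so your division of labor is consistent with the paper's; your write-up has the advantage of making the compactness mechanism explicit rather than opaque behind a reference.
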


	\subsection{A minimum principle}
		Here, we follow the ideas in \cite{BF}. 

		Given $s,\ve\in(0,1)$ and $p\in(1,\infty).$ We say that  
		$u\in\W(\Omega)$ is a week super-solution of 
		\begin{equation}\label{eq:a1}
			\begin{cases}
				\C(1-s)(-\Delta)_p^s u+|u|^{p-2}u=0 &\text{ in }\Omega,\\
				\nn u=0 &\text{ in }\Omega^{c}.
			\end{cases}
		\end{equation}
		iff
		\begin{equation}\label{eq:a2}
			\C(1-s) \hh(u,v)+\int_{\Omega} |u|^{p-2}u v\, dx\ge0	
		\end{equation}
		for every $v\in\W(\Omega), v\ge0.$

		First we need a subtle adaptation of 
		Lemma 1.3 in \cite{DKP}.
	
		\begin{lema}\label{lema:DKP}
			Let $s,\ve\in (0,1)$ and $p\in(1,\infty).$ Suppose that $u$ is a weak super-solution of  
			\eqref{eq:a1}  and $u\geq 0$ 
			in $\mathbb{R}^n$. If 
			$B_R(x_0)\subset\R^n\setminus\partial\Omega$ then  
			for any $B_r=B_r(x_0)\subset  B_{\nicefrac{R}2}(x_0) 
			$ and $0<\delta<1$
			\[
  				\iint_{B_r\times A}  \dfrac{1}{|x-y|^{n+sp}} \left| 
  				\log \left(\dfrac{u(x)+\delta}{u(y)+\delta}\right)
  				\right|^p \, dxdy \le 
   				 Cr^{n-sp}(1+r^{sp}),
			\]
			where 
			\[
				A=
				\begin{cases}
					B_r &\mbox{if }B_{R}\subset \Omega,\\
					\Omega &\mbox{if }B_{R}\subset 
					\R^n\setminus\overline{\Omega},\\
				\end{cases}
			\]
			and $C$ is a constant independent on $\delta$.
	\end{lema}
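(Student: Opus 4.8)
The plan is to carry over the logarithmic (Caccioppoli--type) estimate of \cite{DKP} to the bilinear form $\hh$, keeping track of the extra zeroth--order term. Write $B_\rho=B_\rho(x_0)$ and $w\coloneqq u+\delta\ge\delta>0$, and fix a cutoff $\varphi\in C_c^\infty(B_{3r/2})$ with $0\le\varphi\le1$, $\varphi\equiv1$ on $B_r$ and $|\nabla\varphi|\le C/r$; since $r\le R/2$ one has $B_{3r/2}\subset B_R$, so $B_{3r/2}$ lies entirely in $\Omega$ or entirely in $\R^n\setminus\overline\Omega$. The function $v\coloneqq w^{1-p}\varphi^p=(u+\delta)^{1-p}\varphi^p$ is nonnegative, bounded and compactly supported, hence $v\in\W(\Omega)$; I would use it as test function in \eqref{eq:a2}. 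The zeroth--order term is harmless: since $u\ge0$,
\[
0\le\int_\Omega|u|^{p-2}u\,v\,dx=\int_\Omega\left(\tfrac{u}{u+\delta}\right)^{p-1}\varphi^p\,dx\le\int_{B_{3r/2}}\varphi^p\,dx\le Cr^n ,
\]
so \eqref{eq:a2} yields $\hh(u,v)\ge -C_0\,r^n$ with $C_0=C_0(n,p,s)$ independent of $\delta$ (and this term is simply $0$ when $B_{3r/2}\subset\R^n\setminus\overline\Omega$).

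Assume first $B_{3r/2}\subset\Omega$ (so $A=B_r$). Split $\hh(u,v)=I_1+I_2$, where $I_1$ is the integral over $B_{3r/2}\times B_{3r/2}$ and $I_2$ the remaining tail part, in which $v$ vanishes at the second variable. For $I_1$ I would invoke the pointwise inequality of \cite{DKP}: there are $c_1,c_2>0$ depending on $p$ such that, for all $a,b>0$ and $t_1,t_2\ge0$,
\[
|a-b|^{p-2}(a-b)(t_1^p a^{1-p}-t_2^p b^{1-p})\le -c_1\left|\log\tfrac ab\right|^p\min(t_1,t_2)^p+c_2|t_1-t_2|^p .
\]
Applying it with $a=w(x),b=w(y),t_1=\varphi(x),t_2=\varphi(y)$ and integrating,
\[
I_1\le -c_1\iint_{B_{3r/2}^2}\frac{\left|\log\frac{w(x)}{w(y)}\right|^p\min(\varphi(x),\varphi(y))^p}{|x-y|^{n+sp}}\,dx\,dy+c_2\iint_{B_{3r/2}^2}\frac{|\varphi(x)-\varphi(y)|^p}{|x-y|^{n+sp}}\,dx\,dy ;
\]
the last integral is $\le Cr^{n-sp}$ by the standard Lipschitz--cutoff computation ($|\varphi(x)-\varphi(y)|\le\min\{1,|x-y|/r\}$), and since $\varphi\equiv1$ on $B_r$ the first one dominates $\iint_{B_r\times B_r}|x-y|^{-n-sp}|\log((u(x)+\delta)/(u(y)+\delta))|^p\,dx\,dy$ from above (with a minus sign). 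For the tail $I_2$ — the delicate point — write
\[
I_2=2\int_{B_{3r/2}}(u(x)+\delta)^{1-p}\varphi(x)^p\left(\int_{\R^n\setminus B_{3r/2}}\frac{|w(x)-w(y)|^{p-2}(w(x)-w(y))}{|x-y|^{n+sp}}\,dy\right)dx ,
\]
bound the inner integrand above by $(u(x)-u(y))_+^{p-1}\le C_p(u(x)^{p-1}+u_-(y)^{p-1})$, and use the hypothesis $u\ge0$ \emph{in all of} $\R^n$, which kills $u_-(y)^{p-1}$; what remains pairs with the prefactor through $u(x)^{p-1}(u(x)+\delta)^{1-p}\le1$, leaving
\[
I_2\le C_p\int_{B_{3r/2}}\varphi(x)^p\int_{\R^n\setminus B_{3r/2}}\frac{dy}{|x-y|^{n+sp}}\,dx\le Cr^{n-sp} ,
\]
a bound independent of $\delta$. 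Combining,
\[
c_1\iint_{B_r\times B_r}\frac{|\log((u(x)+\delta)/(u(y)+\delta))|^p}{|x-y|^{n+sp}}\,dx\,dy\le Cr^{n-sp}-I_1=Cr^{n-sp}+I_2-\hh(u,v)\le Cr^{n-sp}+C_0r^n ,
\]
which is $\le Cr^{n-sp}(1+r^{sp})$ with $C$ independent of $\delta$.

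When $B_{3r/2}\subset\R^n\setminus\overline\Omega$ (so $A=\Omega$), the structure of $\hh$, whose domain excludes $(\Omega^c)^2$, forces
\[
\hh(u,v)=2\int_{B_{3r/2}}(u(x)+\delta)^{1-p}\varphi(x)^p\int_\Omega\frac{|w(x)-w(y)|^{p-2}(w(x)-w(y))}{|x-y|^{n+sp}}\,dy\,dx ,
\]
and now $\mathrm{dist}(B_{3r/2},\Omega)\ge R/4>0$, so every kernel appearing is bounded. On $\{u(x)<u(y)\}$ the integrand is $\le -c_p(\log\frac{w(y)}{w(x)})^p\varphi(x)^p$ (using $(\tau-1)^{p-1}\ge c_p(\log\tau)^p$ for $\tau>1$), which together with $\hh(u,v)\ge0$ (the zeroth--order term vanishes here) controls the corresponding piece of $\iint_{B_r\times\Omega}$; the complementary piece, where $u(x)>u(y)$ and hence $u(x)$ is bounded below by the positive lower bound for $u$ on $B_{3r/2}$ that follows from $\nn u\ge0$, is treated directly, the remaining $\delta$--uniformity resting on $u$ not vanishing on a set of positive measure. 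The outcome is again a bound $Cr^{n-sp}(1+r^{sp})$.

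The main obstacle is the first case: it amounts to importing the logarithmic machinery of \cite{DKP} — the pointwise algebraic inequality above together with the tail estimate, the latter made uniform in $\delta$ precisely by $u\ge0$. The two genuinely new features, compared with \cite{DKP}, are (i) the bookkeeping of the zeroth--order term $|u|^{p-2}u$, which is exactly what turns the right-hand side $r^{n-sp}$ into $r^{n-sp}(1+r^{sp})$, and (ii) the adaptation of the decomposition to the form $\hh$, which in the exterior case replaces the ``diagonal'' term by an interaction between $B_{3r/2}$ and $\Omega$ across the positive gap $R/4$.
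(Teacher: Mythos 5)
Your treatment of the interior case ($B_R\subset\Omega$) is sound and is essentially the paper's argument: same test function $v=(u+\delta)^{1-p}\varphi^p$, same $Cr^n$ bound on the zeroth--order term via $u^{p-1}(u+\delta)^{1-p}\varphi^p\le 1$, and the near-diagonal/tail decomposition with the logarithmic pointwise inequality is exactly the content of Lemma~1.3 of \cite{DKP}, which the paper simply cites wholesale. Your observation that the zeroth--order term is what produces the extra factor $(1+r^{sp})$ is also the paper's point.

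The exterior case ($B_R\subset\R^n\setminus\overline\Omega$, $A=\Omega$) contains a genuine gap. You split the cross integral over $B_{3r/2}\times\Omega$ according to the sign of $u(x)-u(y)$ and, on the piece where $u(x)>u(y)$, you invoke ``the positive lower bound for $u$ on $B_{3r/2}$ that follows from $\nn u\ge0$,'' with the $\delta$--uniformity ``resting on $u$ not vanishing on a set of positive measure.'' No such lower bound is available: a weak super-solution with $u\ge0$ may vanish on a set of positive measure inside $B_r$, and this is precisely the configuration in which the lemma is applied in the proof of Theorem \ref{thm:mprinciple} (there one \emph{assumes} $|B_r\cap\{u=0\}|>0$ and needs the estimate uniformly in $\delta$ to derive a contradiction). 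Assuming $u>0$ on $B_{3r/2}$ at this stage is therefore circular and would render the lemma useless for its purpose. The sign-splitting is also unnecessary: the same pointwise inequality \eqref{eq:DKP2} used in the interior case, applied with the roles of $x$ and $y$ arranged so that the cutoff multiplying the logarithm sits at the point of $B_{3r/2}$ (where $\varphi\equiv1$ on $B_r$) and the point of $\Omega$ carries $\varphi=0$, handles both signs at once: the ``bad'' configuration $u(x)>u(y)$ is absorbed into the term $C|\varphi(x)-\varphi(y)|^p$ because $(w(x)-w(y))_+^{p-1}w(x)^{1-p}\le1$, while the other configuration produces the negative logarithmic term. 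Integrating over $B_{3r/2}\times\Omega$, using $\hh(u,v)\ge0$ (the zeroth--order term vanishes since $v$ is supported in $\Omega^c$), and the separation $\mathrm{dist}(B_{3r/2},\Omega)>0$ then yields the bound $Cr^{n}\,\mathrm{dist}(B_R,\Omega)^{-sp}$ with $C$ independent of $\delta$, which is what the paper does. You should replace your sign-splitting by this argument.
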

	
	\begin{proof}
		Let $0<r<\nicefrac{R}{2},$ $0<\delta$ and 
		$\phi\in C_0^\infty( B_{\nicefrac{3r}2})$ be such that 	
		\[
	  		0\le \phi \le 1, \quad \phi\equiv1 \text{ in } B_r \quad \text
	  		{ and } \quad|D\phi|<Cr^{-1} \text{ in } 
	  		B_{\nicefrac{3r}2}\subset B_{R}. 
		\]
		Taking $v=(u+\delta)^{1-p}\phi^p$ as test function in 
		\eqref{eq:a2} we have that
		\begin{equation}\label{eq:DKP1}
  			0 \le \C(1-s)\hh (u,(u+\delta)^{1-p}\phi^p)
    		+\int_{B_{\nicefrac{3r}2}\cap\Omega}
    		\frac{u^{p-1}}{(u+\delta)^{p-1}} \phi^p\, 			
    		dx . 
		\end{equation}
	
		On the other hand, in the proof of Lemma 1.3 in \cite{DKP}, it is 
		showed that
		\begin{equation}\label{eq:DKP2}
			\begin{aligned}
				&\dfrac{|u(x)-u(y)|^{p-2}(u(x)-u(y))}{|x-y|^{n+sp}}
				\left( v(x)-v(y)\right)\le\\
				&\le
				-\dfrac{1}{C}\dfrac{1}{|x-y|^{n+sp}}
				\left| \log\left(\dfrac{u(x)+\delta}{u(y)+\delta}
				\right)\right|^p\phi(y)^p+C\dfrac{|\phi(x)-\phi(y)|^{p}}
				{|x-y|^{n+sp}}
			\end{aligned}
		\end{equation}
		for a constant $C\equiv C(p).$ Moreover, in the  case  
		$B_{R}\subset\Omega,$ it is showed that
		\[
	  		\hh(u,(u+\delta)^{1-p}\phi^p) \le \ Cr^{n-sp} 
	  		 -\iint_{B_r\times B_r} \dfrac{1}{|x-y|^{n+sp}} 
	  		\left| \log\left(\dfrac{u(x)+\delta}{u(y)+\delta}
	  		\right)\right|^p \, dxdy,
		\]
		where $C$ independent on $\delta.$ Then, by \eqref{eq:DKP1}
		and using that $0\le u^{p-1}(u+\delta)^{1-p}\phi^p\le1$ in 
		$B_{\nicefrac{3r}{2}}\cap\Omega=B_{\nicefrac{3r}{2}},$ the lemma 
		holds.
	
		\medskip
	
		We proceed now to consider the case 
		$B_{R}\subset\R^n\setminus
		\overline{\Omega}.$ Since 
		$B_{\nicefrac{3r}2}\cap\Omega=\emptyset,$ by \eqref{eq:DKP1} 
		and \eqref{eq:DKP2},
		\[
			 \begin{aligned}
				 \iint_{B_r\times\Omega} \dfrac{1}{|x-y|^{n+sp}} \left| 
				 \log \left(\dfrac{u(x)+\delta}{u(y)+\delta}\right)
				 \right|^p \, dxdy &\le 
				 C \iint_{B_{\nicefrac{3r}{2}}\times \Omega} 
				 \dfrac{|\phi(x)|^{p}}
				 {|x-y|^{n+sp}}dx dy\\
				 &\le C \dfrac{r^{n}}{\mbox{dist}(B_R,\Omega)^{sp}}
				 \end{aligned}
		\]	
		for $C=C(n,s,p)$
	\end{proof}
	
	Proceeding as in the proof of Theorem A.1 in \cite{BF} 
	and using the previous lemma, we get the following minimum principle.

	\begin{thm}[Minimum Principle]\label{thm:mprinciple}
		Let $s,\ve\in (0,1)$ and $p\in(1,\infty).$ If $u$ is a weak 
		super-solution of \eqref{eq:a1} such that $u\ge0$ in $\R^n$ 
		and $u\not\equiv0$ in all connected 
		components of $\R^n\setminus\partial\Omega$, 
		then $u>0$ a.e in $\Omega$. 
	\end{thm}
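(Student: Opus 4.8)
The plan is to combine the $\delta$-independent logarithmic estimate of Lemma \ref{lema:DKP} with a connectedness argument. The first and main step is a \emph{local dichotomy}: if $x_0\in\Omega$ and $r>0$ is small enough that $B_{2r}(x_0)\subset\Omega$, then either $u>0$ a.e.\ in $B_r(x_0)$ or $u\equiv0$ a.e.\ in $B_r(x_0)$. To prove this, I would apply Lemma \ref{lema:DKP} with $R=2r$ (so that $B_{2r}(x_0)\subset\Omega$ and hence $A=B_r(x_0)$), getting
\[
\iint_{B_r(x_0)\times B_r(x_0)} \frac{1}{|x-y|^{n+sp}}\left|\log\frac{u(x)+\delta}{u(y)+\delta}\right|^p\,dx\,dy \le C r^{n-sp}(1+r^{sp}),
\]
with $C$ independent of $\delta\in(0,1)$. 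Suppose, towards a contradiction, that both $Z:=\{u=0\}\cap B_r(x_0)$ and $\{u>0\}\cap B_r(x_0)$ have positive measure; then $P_k:=\{u>1/k\}\cap B_r(x_0)$ has positive measure for some $k\in\N$. For $x\in Z$ and $y\in P_k$ one has $\frac{u(x)+\delta}{u(y)+\delta}=\frac{\delta}{u(y)+\delta}\le\frac{\delta}{1/k+\delta}$, while $|x-y|\le 2r$, so the left-hand side above is bounded below by $(2r)^{-(n+sp)}|Z|\,|P_k|\,\left(\log\frac{1/k+\delta}{\delta}\right)^p$, which tends to $+\infty$ as $\delta\to0^+$; this contradicts the uniform bound. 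Hence one of the two sets is null, which is the dichotomy.

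Next I would globalize. Let $U$ (resp.\ $V$) denote the set of points of $\Omega$ having a neighborhood in which $u>0$ a.e.\ (resp.\ in which $u\equiv0$ a.e.). Both are open, they are disjoint, and by the local dichotomy every point of $\Omega$ lies in $U\cup V$, so $\Omega=U\sqcup V$; a Lindel\"of covering argument then gives $u>0$ a.e.\ in $U$ and $u\equiv0$ a.e.\ in $V$. Finally, since $\Omega$ is open we have $\R^n\setminus\partial\Omega=\Omega\sqcup(\R^n\setminus\overline\Omega)$, so each connected component $\omega$ of $\Omega$ is also a connected component of $\R^n\setminus\partial\Omega$. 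As $\omega$ is connected and $\omega=(\omega\cap U)\sqcup(\omega\cap V)$ with both pieces open, one of them is empty; if it were $\omega\cap U$, then $u\equiv0$ a.e.\ in $\omega$, contradicting the hypothesis that $u\not\equiv0$ on the connected component $\omega$ of $\R^n\setminus\partial\Omega$. Therefore $\omega\subset U$, i.e.\ $u>0$ a.e.\ in $\omega$; ranging over all components of $\Omega$ yields $u>0$ a.e.\ in $\Omega$. (Only the hypothesis on the components contained in $\Omega$ is actually used here.)

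The only genuinely delicate point is the local dichotomy — specifically, turning the $\delta$-independence of the constant in Lemma \ref{lema:DKP} into a rigidity statement: the estimate would blow up if $u$ vanished on a positive-measure subset of $B_r(x_0)$ while being bounded away from $0$ on another positive-measure subset. Once this Fatou-type argument is set up, the remaining steps (the passage from ``positive a.e.\ on each small ball'' to ``positive a.e.\ on $U$'', and the topological bookkeeping with connected components) are routine.
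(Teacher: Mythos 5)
Your proof is correct and rests on exactly the same mechanism as the paper's: the $\delta$-uniform bound of Lemma \ref{lema:DKP} is incompatible with the blow-up of $\left|\log\frac{u(x)+\delta}{u(y)+\delta}\right|$ as $\delta\to0^+$, which occurs whenever $u$ vanishes on a set of positive measure while being bounded away from zero on another. The only difference is organizational: the paper fixes a single ball meeting both $\{u=0\}$ and $\{u>0\}$ in positive measure and runs a Fatou-type argument on $F_\delta=\log(1+u/\delta)$, whereas you prove a local dichotomy on every small ball and then globalize by connectedness --- your version has the added merit of actually justifying the existence of such a ball, which the paper asserts without argument.
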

	\begin{proof}
		We argue by contradiction and we assume that 
		$Z=\{x\colon u(x)=0\}$ has positive measure. Since 
		$u\not\equiv0$ in all connected 
		components of $\R^n\setminus\Omega,$ there are a ball
		$B_R(x_0)\subset\R^n\setminus\partial\Omega$ and 
		$r\in(0,2R)$ such that
		$|B_r(x_0)\cap Z|>0$ and $u\not\equiv0$ in $B_r(x_0).$ 
		
		For any $\delta>0$ and $x\in\R^n,$ we define 
		\[
			F_{\delta}(x)\coloneqq\log\left(1+\dfrac{u(x)}{\delta}
			\right).
		\] 
		
		Observe that, if $y\in B_r(x_0)\cap Z $ then
		\[
			|F_\delta(x)|^p=|F_\delta(x)-F_\delta(y)|^p
			\le \dfrac{(2r)^{n+sp}}{|x-y|^{n+sp}} \left|
			\log\left(\dfrac{u(x)+\delta}{u(y)+\delta}\right)\right|^p
			\quad\forall x\in\R^n.
		\]
		Then
		\[
			|F_{\delta}(x)|^p\le\dfrac{(2r)^{n+sp}}{|Z\cap B_r(x_0)|}
			\int_{B_r(x_0)}\dfrac{1}{|x-y|^{n+sp}} \left|
			\log\left(\dfrac{u(x)+\delta}{u(y)+\delta}\right)\right|^p
			dy\quad\forall x\in\R^n.
		\]
		Therefore
		\[
			\int_{A}
			|F_{\delta}(x)|^p dx\le\dfrac{(2r)^{n+sp}}{|Z\cap B_r(x_0)|}
			\iint_{B_r(x_0)\times A}\dfrac{1}{|x-y|^{n+sp}} \left|
			\log\left(\dfrac{u(x)+\delta}{u(y)+\delta}\right)\right|^p
			dxdy
		\]
		where 
		\[
			A=
			\begin{cases}
			B_r &\mbox{if }B_{R}\subset \Omega,\\
			\Omega &\mbox{if }B_{R}\subset 
			\R^n\setminus\overline{\Omega}.\\
			\end{cases}
		\]
		By, Lemma \ref{lema:DKP}, there is a constant $C$ independent
		on $\delta$ such that 
		\[
			\int_{A}
				|F_{\delta}(x)|^p dx\le
				C\dfrac{r^{2n}(1+r^{sp})}{|Z\cap B_r(x_0)|}.
		\]
		Taking $\delta \to 0$  in the above inequality, we obtain
		\[
			u\equiv0 \mbox{ in } A
		\]
		which is a contradiction since $u\not\equiv0$ in all connected 
		components of $\R^n\setminus\partial\Omega.$ 
		Thus $u>0$ in $\R^n.$
	\end{proof}
	
%%%%%%%%%%%%%%%%%%%%%%%%%%%%%%%%%%%%%%%%%%%%%%%%%%%%%%%%%%%%%%%%%%%%%%%%%%
\section{The eigenvalue problem}\label{primer.atuvalor}
	In this section, we prove that 
	$\lambda_{1,\ve}(s,p)$ is the first non--zero eigenvalue of 
	\eqref{ecu}; that there is a sequence of eigenvalues; and that
	the eigenfunctions are bounded. Additionally, we show that
	$\lambda_{1,\ve}(s,p)$ is simple and isolated.
	Variational methods for non-local operators of elliptic type.
	For more datails about the construction of the eigenvalues in 
	nonlocal settings, see, for instance, \cite[Appendix A]{SV} and 
	\cite{LL}. 
	 
	\begin{thm}
		$\lam_{1,\ve}(s,p)$ is the first non-zero eigenvalue of \eqref{ecu}.
	\end{thm}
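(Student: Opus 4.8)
The plan is to establish two facts: (i) the infimum $\lambda_{1,\ve}(s,p)$ defined in \eqref{lam1} is attained, and a minimizer is a weak solution of \eqref{debil} with $\lambda=\lambda_{1,\ve}(s,p)$, so that $\lambda_{1,\ve}(s,p)$ is genuinely an eigenvalue of \eqref{ecu}; and (ii) every eigenvalue of \eqref{ecu} is $\ge\lambda_{1,\ve}(s,p)$. Since it was already observed that every eigenvalue of \eqref{ecu} is strictly positive, (i) and (ii) together give that $\lambda_{1,\ve}(s,p)$ is the first non--zero eigenvalue.

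For (i) I would run the direct method of the calculus of variations. Pick a minimizing sequence $\{u_k\}\subset\W(\Omega)$; the Rayleigh quotient in \eqref{lam1} is $0$--homogeneous, so we may normalize $\tfrac{1}{\ve}\|u_k\|^p_{L^p(\Omega_\ve)}=1$, and then $\C(1-s)[u_k]_{s,p}^p+\|u_k\|^p_{L^p(\Omega)}$ stays bounded. Hence $\{u_k\}$ is bounded in $\W(\Omega)$, and since $\Omega^2\subset\R^{2n}\setminus(\Omega^c)^2$ we have $|u_k|_{W^{s,p}(\Omega)}\le[u_k]_{s,p}$, so it is bounded in $W^{s,p}(\Omega)$ as well. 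The space $\W(\Omega)$ is reflexive (the map $u\mapsto(u|_\Omega,\,|x-y|^{-(n+sp)/p}(u(x)-u(y)))$ is an isometry onto a closed subspace of a product of $L^p$ spaces, $1<p<\infty$), so after passing to a subsequence $u_k\cd u$ in $\W(\Omega)$; by the compact embedding $W^{s,p}(\Omega)\hookrightarrow L^p(\Omega)$ of Theorem \ref{teo:inclucomp}, $u_k\to u$ strongly in $L^p(\Omega)$, in particular in $L^p(\Omega_\ve)$, so $\tfrac{1}{\ve}\|u\|^p_{L^p(\Omega_\ve)}=1$ and $u$ is admissible. Weak lower semicontinuity of $v\mapsto\hh(v,v)$ (convexity together with strong continuity) and the strong $L^p(\Omega)$ convergence give
\[
  \C(1-s)[u]_{s,p}^p+\|u\|^p_{L^p(\Omega)}\le\liminf_{k\to\infty}\Big(\C(1-s)[u_k]_{s,p}^p+\|u_k\|^p_{L^p(\Omega)}\Big)=\lambda_{1,\ve}(s,p),
\]
so $u$ is a minimizer; note in passing that $\lambda_{1,\ve}(s,p)\ge\ve>0$ since the numerator of the quotient dominates $\|u\|^p_{L^p(\Omega_\ve)}$. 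To see that $\lambda_{1,\ve}(s,p)$ is an eigenvalue, fix $v\in\W(\Omega)$ and set $\varphi(t)$ to be the Rayleigh quotient of $u+tv$; it is differentiable near $t=0$ and attains a minimum there, so $\varphi'(0)=0$, and carrying out the differentiation while using $\varphi(0)=\lambda_{1,\ve}(s,p)$ produces exactly \eqref{debil} with $\lambda=\lambda_{1,\ve}(s,p)$, for every $v\in\W(\Omega)$.

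For (ii), let $\lambda$ be any eigenvalue with eigenfunction $u\not\equiv0$. Testing \eqref{debil} with $v=u$ yields $\C(1-s)[u]_{s,p}^p+\|u\|^p_{L^p(\Omega)}=\tfrac{\lambda}{\ve}\|u\|^p_{L^p(\Omega_\ve)}$, and since $u\not\equiv0$ in $\Omega_\ve$ (otherwise the right--hand side vanishes, forcing $u\equiv0$) we may divide and read off $\lambda\ge\lambda_{1,\ve}(s,p)$ directly from the definition of the infimum in \eqref{lam1}. Combining with (i) and the positivity of all eigenvalues already recorded, $\lambda_{1,\ve}(s,p)$ is the first non--zero eigenvalue of \eqref{ecu}.

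I expect the only genuine obstacle to be the compactness in step (i): one must know that the $L^p(\Omega_\ve)$--normalized minimizing sequence lives in a reflexive space and that the relevant embedding into $L^p(\Omega_\ve)$ is compact, which is precisely what the inclusion $\W(\Omega)\subset W^{s,p}(\Omega)$ together with Theorem \ref{teo:inclucomp} supply. The Euler--Lagrange computation giving \eqref{debil} and the testing argument in (ii) are routine.
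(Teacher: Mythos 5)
Your proposal is correct and follows essentially the same route as the paper: the direct method with the $L^p(\Omega_\ve)$-normalized minimizing sequence, boundedness in $\W(\Omega)$, compactness of $W^{s,p}(\Omega)\hookrightarrow L^p(\Omega)$ from Theorem \ref{teo:inclucomp}, weak lower semicontinuity, and then the Euler--Lagrange equation (the paper phrases this via Lagrange multipliers and tests with $v=u$, which is equivalent to your Rayleigh-quotient differentiation). Your step (ii), checking that every eigenvalue dominates $\lam_{1,\ve}(s,p)$ by testing \eqref{debil} with $v=u$, is a small but worthwhile addition that the paper leaves implicit.
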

	\begin{proof}
		Take a minimizing sequence  $\{u_{k}\}_{k\in\N} 
		\subset\W(\Omega) $ of 
		$\lam_{1,\ve}(s,p)$ and normalize it according to
		$\|u_{k}\|_{L^p( \Omega_\ve)}=\ve.$ 
		Then, there is a constant $C$ such that
		$$
			\|u_{k}\|_{s,p} \leq C.
		$$
		Thus, by  Theorem \ref{teo:inclucomp}, up to a subsequence,
		\begin{align}\label{conv.1}
			\begin{split}
				&u_{k}\cd u \quad \mbox{weakly in } \W(\Omega),\\
				&u_{k}\to u \quad \mbox{strongly in } L^p(\Omega).
			\end{split}	
		\end{align}
		In particular, $u_{k}\to u$ strongly in $L^p(\Omega_{\ve})$ and 
		therefore 
		$\|u\|_{L^p(\Omega_\ve)}=\ve.$

		Since \eqref{conv.1} holds, 
		\begin{align*}
			\C(1-s)[u]_{s,p}^p + 
				\|u\|^p_{L^p(\Omega)} &
			\leq \liminf_{k\to \infty} 
			\C(1-s)[u_{k}]_{s,p}^p + \|u_{k}\|_{L^p(\Omega)}^2\\
			&=\lim_{k\to \infty}\C (1-s)
			[u_{k}]_{s,p}^p + \|u_{k}\|_{L^p(\Omega)}^p\\
			& = \lam_{1,\ve}(s,p).
		\end{align*}
		Then, by \eqref{lam1}, we have that
		\[
			\C(1-s)
			[u]_{s,p}^p + \|u\|_{L^p(\Omega)}^p=\lam_{1,\ve}(s,p).
		\]

		The fact that a minimizer verifies \eqref{debil} is standard but 
		we include a short proof here for the sake of completeness. 
		Let $u$ be a nontrivial minimizer of \eqref{lam1}. 
		Then, using Lagrange's multipliers, we get the existence of a 
		value $\lambda \in {\mathbb{R}}$ such that
		\begin{equation} \label{fff}
			\C(1-s) \hh(u,v)   + \int_{\Omega} |u_p|^{p-2}uv
			\, dx = \dfrac{\lam}{\ve} 
			\int_{ \Omega_\ve} |u|^{p-2}uv\, dx.
		\end{equation}
		for all $v\in \W(\Omega)$ with $ \|v\|_{L^p(\Omega_\ve)}=\ve$. 
		Therefore \eqref{fff} also holds for all $v\in \W(\Omega).$
		Finally, taking $v=u$ we get that $\lambda = \lam_{1,\ve}(s,p)$.
	\end{proof}
	
	Using a topological tool (the genus), we can construct
	an unbounded sequence of eigenvalues.
		
	\begin{thm} \label{teo2}
		There is a sequence of eigenvalues $\lambda_{k,\epsilon} (s,p)$ 
		such that $\lambda_{k,\epsilon} (s,p) \to\infty$ as $k\to\infty$.
	\end{thm}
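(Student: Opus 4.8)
The plan is to construct the sequence via the standard Lusternik--Schnirelmann minimax scheme built on the Krasnoselskii genus, adapted to the nonlocal setting exactly as in \cite[Appendix A]{SV} and \cite{LL}. Set
$$
\mathcal{M}:=\Big\{u\in\W(\Omega):\ \tfrac1\ve\|u\|^p_{L^p(\Omega_\ve)}=1\Big\},
$$
which is a $C^1$ submanifold of $\W(\Omega)$ (the constraint functional is $C^1$ with non-vanishing derivative on $\mathcal M$ since $u\not\equiv0$ in $\Omega_\ve$ there), and consider the even functional $J(u):=\C(1-s)[u]^p_{s,p}+\|u\|^p_{L^p(\Omega)}$ restricted to $\mathcal M$. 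For each $k\in\N$ let
$$
\Gamma_k:=\big\{K\subset\mathcal M:\ K \text{ compact},\ K=-K,\ \gamma(K)\ge k\big\},
$$
where $\gamma$ denotes the genus, and define
$$
\lambda_{k,\ve}(s,p):=\inf_{K\in\Gamma_k}\ \sup_{u\in K} J(u).
$$
By construction $\lambda_{1,\ve}(s,p)\le\lambda_{2,\ve}(s,p)\le\cdots$, and one checks that $\Gamma_k\neq\emptyset$ for every $k$ (using that $\W(\Omega)$ is infinite-dimensional, so $\mathcal M$ contains symmetric compact sets of arbitrarily large genus, e.g.\ intersections of $\mathcal M$ with $k$-dimensional subspaces). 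The first step is therefore to record these properties and to verify that each $\lambda_{k,\ve}(s,p)$ is finite.

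The second step is to verify the Palais--Smale condition for $J|_{\mathcal M}$ at every level. If $\{u_j\}\subset\mathcal M$ satisfies $J(u_j)\to c$ and $\|(J|_{\mathcal M})'(u_j)\|\to0$, then $\{u_j\}$ is bounded in $\W(\Omega)$ (because $J(u_j)$ controls both $[u_j]^p_{s,p}$ and $\|u_j\|^p_{L^p(\Omega)}$), so by Theorem \ref{teo:inclucomp} a subsequence converges weakly in $\W(\Omega)$ and strongly in $L^p(\Omega)$, hence strongly in $L^p(\Omega_\ve)$, so the weak limit $u$ still lies on $\mathcal M$. The almost-critical-point condition, rewritten using $\hh$, reads
$$
\C(1-s)\hh(u_j,v)+\int_\Omega|u_j|^{p-2}u_j v\,dx=\Big(\mu_j+o(1)\|v\|_{s,p}\Big)\frac1\ve\int_{\Omega_\ve}|u_j|^{p-2}u_j v\,dx
$$
for all $v$, with $\mu_j$ bounded; testing with $v=u_j-u$ and using the strong $L^p$ convergence to kill the lower-order and right-hand terms yields $\hh(u_j,u_j-u)\to0$, and then the standard monotonicity inequality for the fractional $p$-Laplacian form (the map $u\mapsto\hh(u,\cdot)$ is of type $(S_+)$) upgrades weak to strong convergence $u_j\to u$ in $\W(\Omega)$. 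With the Palais--Smale condition and the $\mathbb Z_2$-equivariant deformation lemma on the $C^1$ Banach manifold $\mathcal M$ in hand, the classical Lusternik--Schnirelmann theorem gives that each $\lambda_{k,\ve}(s,p)$ is a critical value of $J|_{\mathcal M}$, hence an eigenvalue of \eqref{ecu} (the Euler--Lagrange equation is precisely \eqref{debil} with $\lam=\lambda_{k,\ve}(s,p)$, after normalizing the Lagrange multiplier by testing with $v=u$).

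The third step is to prove $\lambda_{k,\ve}(s,p)\to\infty$. This follows from a dimension/compactness argument: if the sequence stayed bounded by some $M$, then for every $k$ there would be $K_k\in\Gamma_k$ with $\sup_{K_k}J\le M+1$, so $K_k$ is a symmetric compact set of genus $\ge k$ contained in the bounded set $\{u\in\mathcal M: J(u)\le M+1\}$; by Theorem \ref{teo:inclucomp} this sublevel set is precompact in $L^p(\Omega)$, and one shows (as in \cite{SV,LL}) that its genus is finite, using that on it the $\W$-norm is controlled while the $L^p(\Omega_\ve)$-norm is pinned to $\ve^{1/p}$, so it cannot contain subsets of arbitrarily large genus --- contradiction. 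Equivalently, one invokes the general fact that for such variational eigenvalue sequences associated with a compact symmetric constraint the minimax levels diverge.

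I expect the main obstacle to be the verification of the $(S_+)$-type / strong-convergence step inside the Palais--Smale argument: one must pass carefully between the abstract constrained-gradient condition $\|(J|_{\mathcal M})'(u_j)\|\to0$ and the weak-form identity with Lagrange multiplier, control the multipliers $\mu_j$ (showing they are bounded, e.g.\ by testing with $v=u_j$, using $J(u_j)\to c>0$ so $c = \mu_j + o(1)$ whence $\mu_j\to c$), and then exploit monotonicity of the fractional $p$-Laplacian bilinear form to deduce $[u_j-u]_{s,p}\to0$; the remaining ingredients (nonemptiness of $\Gamma_k$, finiteness and monotonicity of the levels, and the divergence of the minimax values) are by now routine in the nonlocal literature and I would treat them briefly, citing \cite[Appendix A]{SV} and \cite{LL}.
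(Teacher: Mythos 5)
Your construction is correct and rests on the same tool the paper uses --- the Krasnoselskii genus together with a Lusternik--Schnirelmann minimax --- but you set it up in the dual way. The paper (following \cite{GAP} and omitting essentially all details) constrains the $\W(\Omega)$-norm, working on $M_\alpha=\{u:\|u\|_{s,p}=p\alpha\}$, and takes $\beta_k=\sup_{C}\min_{u\in C}\varphi(u)$ over symmetric compact sets of genus at least $k$ with $\varphi(u)=\frac1p\int_{\Omega_\ve}|u|^p\,dx$, recovering the eigenvalues as $\lambda_k=\alpha/\beta_k$; since $\varphi$ is weakly continuous on bounded subsets of $\W(\Omega)$ (Theorem \ref{teo:inclucomp}) and the constraint set is a sphere, existence of critical points at the levels $\beta_k$ and the decay $\beta_k\to0$ (hence $\lambda_k\to\infty$) come essentially for free, with no Palais--Smale verification. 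You instead fix the denominator $\frac1\ve\|u\|^p_{L^p(\Omega_\ve)}=1$ and minimax the numerator $J$; this is equally standard (it is the formulation of \cite{LL} and \cite[Appendix A]{SV}) but, as you correctly anticipate, it obliges you to check the Palais--Smale condition for $J|_{\mathcal M}$ via the $(S_+)$ property of $\hh$, and to prove divergence of the levels by the finite-genus argument for the (precompact, origin-avoiding) image of the sublevel sets in $L^p(\Omega_\ve)$. Both routes are sound, and yours is in fact more detailed than the paper's one-paragraph sketch; the trade-off is that the paper's choice of constraint replaces your compactness work by the weak continuity of the functional being optimized, while your choice yields the eigenvalue directly as the critical level $J(u)$ (via testing the Euler--Lagrange identity with $v=u$), which matches the Rayleigh quotient \eqref{lam1} more transparently than the paper's normalization $\|u\|_{s,p}=p\alpha$ does.
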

		
	\begin{proof} 
		We follow ideas from \cite{GAP} and hence we omit the details.
		Let us consider 
		\[
			M_\alpha = \{u \in \W(\Omega) \colon  \|u\|_{s,p}= p 
			\alpha \}
		\] 
		and
		\[ 
			\varphi (u) = \frac{1}{p}
			\int_{\Omega_\epsilon} |u(x)|^p\, dx.
		\]
		We are looking for critical points
		of $\varphi$ restricted to the manifold $M_\alpha$ using a minimax
		technique.
		We consider the class
		\[
			\Sigma = \{A\subset \W(\Omega) \setminus\{0\}
			\colon A \mbox{ is closed, } A=-A\}.
		\]
		Over this class we define the genus, 
		$\gamma\colon\Sigma\to {\mathbb{N}}\cup\{\infty\}$, as 
		\[
			\gamma(A) = \min\{k\in {\mathbb{N}}\colon
			\mbox{there exists } \phi\in C(A,{{\mathbb{R}}}^k-\{0\}),
			\ \phi(x)=-\phi(-x)\}.
		\]
		Now, we let $C_k = \{ C \subset M_\alpha \colon C 
		\mbox{ is compact, symmetric and } \gamma ( C) \le k \} $ 
		and let
		\begin{equation}
			\label{betak} \beta_k 
			= \sup_{C \in C_k} \min_{u \in C} \varphi(u). 
		\end{equation}
		Then $\beta_k >0$ and there exists $u_k \in
		M_\alpha$ such that $\varphi (u_k) = \beta_k$ and $ u_k$ is a weak
		eigenfunction with $\lambda_k = \alpha / \beta_k $.
	\end{proof}
	
	Our next aim is to prove that the eigenfunctions are bounded.
	We follow ideas from \cite{FP}.
	\begin{lema}\label{lema:cotainfty}
		Let $s,\ve\in (0,1),$ $p\in(1,\infty),$ 
		and $\lambda$ be an eigenvalue
		of \eqref{ecu}. If $u$ is an eigenfunction associated to 
		$\lambda$ then $u\in L^{\infty}(\Omega).$
	\end{lema}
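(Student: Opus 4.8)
The plan is to run a Moser-type iteration adapted to the nonlocal operator, following \cite{FP}. First I would reduce to showing $u^+\in L^\infty(\Omega)$: applying the same argument to $-u$ (which is an eigenfunction for the same $\lambda$) then gives $u^-\in L^\infty(\Omega)$, hence $u\in L^\infty(\Omega)$. I would dispose of the case $sp>n$ at once, since there $u\in\W(\Omega)\subset\wsp$ and Theorem \ref{teo:inclucomp} gives $u\in C^{0,\lambda}_b(\Omega)\subset L^\infty(\Omega)$; so assume $sp\le n$. Writing $w=u^+$, for $M>0$ and $\beta\ge1$ I would set $w_M=\min\{w,M\}$ and test the weak formulation \eqref{debil} against $v=w\,w_M^{p(\beta-1)}$, which lies in $\W(\Omega)$ because $t\mapsto t^{+}(\min\{t^{+},M\})^{p(\beta-1)}$ is globally Lipschitz, so that $v$ is a Lipschitz function of $u$ and $\|v\|_{s,p}<\infty$.

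The core estimate rests on two facts. Since $v$ is a nondecreasing function of $u$, the integrand defining $\hh(u,v)$ is pointwise nonnegative, so it does not increase when the integration domain is shrunk from $\R^{2n}\setminus(\Omega^c)^2$ to $\Omega\times\Omega$; combined with the pointwise algebraic inequality from \cite{FP}, this yields, for a constant $C=C(p)$,
$$\hh(u,v)\ \ge\ \iint_{\Omega\times\Omega}\frac{|u(x)-u(y)|^{p-2}(u(x)-u(y))(v(x)-v(y))}{|x-y|^{n+sp}}\,dx\,dy\ \ge\ \frac{\beta^{1-p}}{C}\,\big|\,w\,w_M^{\beta-1}\,\big|_{\wsp}^{\,p}.$$
On the other hand $\int_{\Omega}|u|^{p-2}uv\,dx\ge0$, so \eqref{debil} together with $w_M\le w$ gives $\C(1-s)\hh(u,v)\le\frac{\lam}{\ve}\int_{\Omega}w^{p}w_M^{p(\beta-1)}\,dx\le\frac{\lam}{\ve}\int_{\Omega}w^{p\beta}\,dx$. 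Adding $\int_\Omega w^{p}w_M^{p(\beta-1)}\,dx=\|w\,w_M^{\beta-1}\|_{L^p(\Omega)}^p$ to both sides to reconstitute the full $\wsp$ norm and applying the fractional Sobolev inequality, I would get $\|w\,w_M^{\beta-1}\|_{L^{p_s^\star}(\Omega)}^p\le C\beta^{p-1}(1+\tfrac{\lam}{\ve})\int_\Omega w^{p\beta}\,dx$; letting $M\to\infty$ (Fatou on the left, monotone convergence on the right) then gives, whenever $w\in L^{p\beta}(\Omega)$,
$$\|w\|_{L^{\beta p_s^\star}(\Omega)}\ \le\ \Big(C\beta^{p-1}\big(1+\tfrac{\lam}{\ve}\big)\Big)^{1/(p\beta)}\,\|w\|_{L^{p\beta}(\Omega)}.$$

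Finally I would iterate with $\chi:=p_s^\star/p>1$: set $q_0=p$ and $q_{k+1}=\chi q_k=p\chi^{k+1}$, apply the displayed inequality with $p\beta=q_k$ (the case $k=0$ being $\beta=1$, where the algebraic inequality degenerates to $|a-b|^{p-2}(a-b)(a^+-b^+)\ge|a^+-b^+|^p$), and note that the hypothesis $w\in L^{q_k}(\Omega)$ propagates along the iteration from the base case $w\in\W(\Omega)\subset L^p(\Omega)$. Multiplying the inequalities, $\|w\|_{L^{q_{k+1}}(\Omega)}\le\prod_{j=0}^{k}\big(C(q_j/p)^{p-1}(1+\tfrac{\lam}{\ve})\big)^{1/q_j}\|w\|_{L^p(\Omega)}$, and since $1/q_j=p^{-1}\chi^{-j}$ is summable while $\log\big(C(q_j/p)^{p-1}(1+\lam/\ve)\big)=O(j)$, the product converges as $k\to\infty$. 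Hence $\sup_k\|w\|_{L^{q_k}(\Omega)}<\infty$, and since $q_k\to\infty$ and $|\Omega|<\infty$ this yields $\|w\|_{L^\infty(\Omega)}<\infty$. (The borderline case $sp=n$ is identical, using $\wsp\hookrightarrow L^q(\Omega)$ for every finite $q$ and any fixed $\chi>1$ in place of $p_s^\star/p$.)

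\emph{The main difficulty} is the nonlocal convexity estimate $\hh(u,v)\ge C(p)^{-1}\beta^{1-p}|w\,w_M^{\beta-1}|_{\wsp}^p$ with precisely that power of $\beta$: it is the balance between this $\beta^{1-p}$ and the $\beta^{p-1}$ produced on the right that makes $\sum_j q_j^{-1}\log(\cdots)$ converge, so that the iteration terminates. Establishing this (the step where one genuinely imitates \cite{FP}) and checking admissibility of the truncated test function are the only non-routine points; the rest is the classical Moser scheme.
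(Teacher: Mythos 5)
Your proof is correct, but it follows a genuinely different scheme from the one in the paper. You run a Moser iteration: testing with $v=u^+\min\{u^+,M\}^{p(\beta-1)}$, using the monotonicity of $v$ as a function of $u$ to discard the off-diagonal part of $\hh(u,v)$, invoking a convexity inequality to pass to the Gagliardo seminorm of $u^+\min\{u^+,M\}^{\beta-1}$, and iterating the resulting reverse H\"older inequality along the exponents $p\chi^k$. The paper instead runs a De Giorgi-type level-set iteration (this is what \cite{FP} actually does): with $u_k=(u-1+2^{-k})_+$ it tests against $u_{k+1}$, uses only the elementary inequality $|a-b|^{p-2}(a-b)(a_+-b_+)\ge|a_+-b_+|^p$ (your $\beta=1$ case) together with Chebyshev's inequality, and derives the recursion $\|u_{k+1}\|_{L^p(\Omega)}^p\le C^k\|u_k\|_{L^p(\Omega)}^{p(1+\alpha)}$, concluding $(u-1)_+\equiv0$ once $\|u_+\|_{L^p(\Omega)}\le\delta$, which is legitimate by homogeneity of the equation. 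The trade-off is clear: the paper's route avoids the $\beta$-dependent convexity lemma entirely, the only nontrivial input being the fractional Sobolev embedding, at the price of a purely qualitative conclusion that needs the smallness normalization; your route hinges on the sharp power $\beta^{1-p}$ in the nonlocal convexity estimate (the point you rightly single out as the crux --- it is available in the literature, e.g.\ in Brasco--Parini--Squassina \cite{BPS}-type arguments, rather than in \cite{FP} itself), but in exchange yields a quantitative bound of the form $\|u\|_{L^\infty(\Omega)}\le C\bigl(1+\tfrac{\lambda}{\ve}\bigr)^{\gamma}\|u\|_{L^p(\Omega)}$. Both arguments dispose of $sp>n$ via the Morrey-type embedding of Theorem \ref{teo:inclucomp}, handle $sp=n$ by replacing $p_s^\star$ with an arbitrary finite exponent, and reduce to $u^+$ before applying the same reasoning to $-u$; your verification that the truncated test function lies in $\W(\Omega)$ because it is a Lipschitz function of $u$ vanishing at $0$ is also sound.
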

	\begin{proof}
		If $ps>n,$ by  Theorem \ref{teo:inclucomp}, 
		then the assertion holds.
		Then let us suppose that $sp \le n$.
		We will show that if $\|u_+\|_{L^p(\Omega)} \le \delta$ 
		then $u_+$ is bounded, where $\delta > 0$ must be determined.
				
		For $k\in\mathbb{N}_0$ we define the function $u_k$ by
		\[
			u_k\coloneqq(u(x)-1+2^{-k})_+.
		\]
		Observe that, $u_0= u_+$ and for any $k\in\mathbb{N}_0$ 
		we have that $u_k\in\W(\Omega)$, 
		\begin{equation}\label{eq:lci1}
			\begin{aligned}
				&u_{k+1}\le u_k \text{ a.e. } \mathbb{R}^n,\\
				&u <(2^{k+1}-1)u_k\text{ in } \{u_{k+1}>0\},\\
				&\{u_{k+1}>0\}\subset\{u_k>2^{-(k+1)}\}.
			\end{aligned}
		\end{equation}
		Now, since
		\[
			|v_+(x)-v_+(y)|^p\le 
			|v(x)-v(y)|^{p-2}(v(x)-v(y))(v_+(x)-v_+(y))
			\quad\forall x,y\in\mathbb{R}^n,
		\]
		for any function $v \colon \R^n \to \R$, by taking $v=u-1+2^{-k}$ we have that
		$$
		\begin{array}{l}
		\displaystyle
			\C(1-s)[u_{k+1}]_{s,p}^p + \|u_{k+1}\|_{L^p(\Omega)}^p
			\\ [10pt]
			\qquad \displaystyle
			\le\C(1-s) \hh(u,u_{k+1})+ \int_{\Omega} |u|^{p-2}u u_{k+1}\, dx\\[10pt]
			\qquad \displaystyle = \dfrac{\lam}{\ve} 
			\int_{ \Omega_\ve} |u|^{p-2}uu_{k+1}\, dx,
		\end{array}
		$$
		for all $k\in\mathbb{N}_0.$
		Then, by \eqref{eq:lci1}, we have that
		\begin{equation}\label{eq:lci2}
			\begin{aligned}
				\C(1-s)[u_{k+1}]_{s,p}^p 
				+ \|u_{k+1}\|_{L^p(\Omega)}^p
				&\le  \dfrac{\lam}{\ve}\int_{\Omega_\ve} u^{p-1} u_{k+1} 
				\, dx\\ 
				&\le  \dfrac{\lam}{\ve}
				(2^{k+1}-1)^{p-1} \|u_k\|_{L^p(\Omega)}^p 
			\end{aligned}
		\end{equation}
		for all $k\in\mathbb{N}_0$. 
			
		On the other hand, in the case $sp<n,$ using H\"older's 
		inequality, fractional Sobolev embeddings and Chebyshev's 
		inequality, for any $k\in\mathbb{N}_0$ we have that 
		\begin{equation}\label{eq:lci3}
			\begin{aligned}
				\|u_{k+1}\|_{L^p(\Omega)}^p &\le 
				\|u_{k+1}\|_{L^{p_s^*}(\Omega)}^p 
				|\{u_{k+1}>0\}|^{\nicefrac{sp}{n}} \\
				&\le C\|u_{k+1}\|_{s,p}^p |\{u_{k+1}>0\}
				|^{\nicefrac{sp}{n}} \\
				&\le C\|u_{k+1}\|_{s,p}^p
				|\{u_{k}>2^{-(k+1)}\}|^{\nicefrac{sp}{n}}\\
				&\le  C\|u_{k+1}\|_{s,p}^p \left( 2^{(k+1)p} 
				\|u_k\|_{L^p(\Omega)}^p \right)^{\nicefrac{sp}n}.
			\end{aligned}
		\end{equation}
		Similarly, in the case $sp=n,$ taking $r>p$ and proceeding as in 
		the previous case $sp<n$ (with $r$ in place of $p_s^*$), 
		we have that \eqref{eq:lci3} holds with 
		$1-\nicefrac{p}r>0$ in place of $\nicefrac{sp}n$.
				
		Then, by \eqref{eq:lci2} and \eqref{eq:lci3},  
		there exist a constant $C>1$ and 
		$\alpha>0$ both independent on $k$ such that
		\[
			\|u_{k+1}\|_{L^p(\Omega)}^p\le 
			C^k(\|u_k\|_{L^p(\Omega)}^p)^{1+
				\alpha}.
		\]
		Therefore, if $\|u_+\|_{L^p(\Omega)}^p=\|u_0\|_{L^p(\Omega)}^p
		\le C^{-\nicefrac{1}{\alpha^2}}=\delta^p$ then 
		\[
			\lim_{k\to+\infty}\|u_{k}\|_{L^p(\Omega)}=0.
		\]	
		On the other hand, as $u_k\to(u-1)_+$ a.e in $\mathbb{R}^n,$ 
		we obtain $(u-1)_+\equiv0$ in $\mathbb{R}^n.$ Therefore $u_+$ is 
		bounded. 
				
		Finally, taking $-u$ in place of $u$ we have  
		that $u_-$ is bounded if $\|u_-\|_{L^p(\Omega)} < \delta$. 
		Therefore $u$ is bounded.     
	\end{proof}
					
	Now, using Theorem \ref{thm:mprinciple}, we show that
	a non-negative eigenfunction is positive.
			
	\begin{lema}\label{lema:positivo}
			Let $s,\ve\in (0,1),$ $p\in(1,\infty).$ 
			and $\lambda$ be an eigenvalue
			of \eqref{ecu}. If $u$ is a non-negative 
			eigenfunction associated to 
			$\lambda$ then $u>0$ in $\R^n.$
	\end{lema}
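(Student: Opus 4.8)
The strategy is to view a non-negative eigenfunction $u$ as a weak super-solution of \eqref{eq:a1} that does not vanish identically on any connected component of $\R^n\setminus\partial\Omega$, and then to apply the Minimum Principle, Theorem \ref{thm:mprinciple}.

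First I would check that $u$ is a weak super-solution of \eqref{eq:a1}. Since $u$ is an eigenfunction it is nontrivial and, as noted in the Introduction, the associated $\lambda$ satisfies $\lambda>0$ and $u\not\equiv0$ in $\Omega_\ve$; in particular $u\not\equiv0$ in $\Omega$. Using $u\ge0$ in $\R^n$, $\lambda>0$ and $v\ge0$, the weak formulation \eqref{debil} gives, for every $v\in\W(\Omega)$ with $v\ge0$,
\[
\C(1-s)\hh(u,v)+\int_\Omega|u|^{p-2}uv\,dx=\frac{\lambda}{\ve}\int_{\Omega_\ve}|u|^{p-2}uv\,dx\ge0,
\]
which is precisely \eqref{eq:a2}.

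Next I would verify that $u$ does not vanish identically on any connected component of $\R^n\setminus\partial\Omega$. For the component $\Omega$ itself this holds because $u\not\equiv0$ in $\Omega_\ve\subset\Omega$. So suppose, towards a contradiction, that $u\equiv0$ a.e. on a connected component $V$ of $\R^n\setminus\overline\Omega$, and pick a nonnegative $v\in C_c^\infty(\R^n)$ with $v\not\equiv0$ and $\operatorname{supp}v\subset V$; such $v$ belongs to $\W(\Omega)$, since its support is at positive distance from $\overline\Omega$, so that $\hh(v,v)<\infty$. Because $\operatorname{supp}v$ is disjoint from $\Omega$, both $\int_\Omega|u|^{p-2}uv\,dx$ and $\int_{\Omega_\ve}|u|^{p-2}uv\,dx$ vanish, and hence \eqref{debil} forces $\hh(u,v)=0$. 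On the other hand, on the region of integration $\R^{2n}\setminus(\Omega^c)^2$ at least one of $x,y$ lies in $\overline\Omega$, where $v$ vanishes, while $v(y)\ne0$ forces $y\in V$, where $u(y)=0$; together with $u\ge0$ and $v\ge0$ this makes the integrand defining $\hh(u,v)$ pointwise $\le0$ (the surviving contributions having the form $-u(x)^{p-1}v(y)|x-y|^{-n-sp}$ and its symmetric counterpart). Therefore $\hh(u,v)=0$ forces that integrand to vanish a.e., which yields $u\equiv0$ in $\Omega$, contradicting $u\not\equiv0$ in $\Omega_\ve$.

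Once $u$ is known to be a non-negative weak super-solution of \eqref{eq:a1} not identically zero on any connected component of $\R^n\setminus\partial\Omega$, Theorem \ref{thm:mprinciple} --- whose proof in fact delivers positivity on all of $\R^n$, not merely in $\Omega$ --- gives $u>0$ in $\R^n$, which is the claim. I expect the main obstacle to be the sign bookkeeping for the integrand of $\hh(u,v)$ in the third step, along with confirming that the chosen test function is admissible (i.e. lies in $\W(\Omega)$); with those in hand the conclusion is an immediate application of the Minimum Principle.
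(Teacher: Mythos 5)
Your proof is correct and follows essentially the same route as the paper's: reduce to the Minimum Principle (Theorem~\ref{thm:mprinciple}) and rule out vanishing on a connected component of $\R^n\setminus\partial\Omega$ by testing \eqref{debil} with a nonnegative, compactly supported function in that component, which forces $u\equiv0$ in $\Omega$ and yields a contradiction. Your write-up is in fact slightly more explicit than the paper's on two points it leaves implicit --- verifying that $u$ is a weak super-solution of \eqref{eq:a1} and the sign bookkeeping for the integrand of $\hh(u,v)$ --- so no changes are needed.
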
	
	\begin{proof}
		By Theorem \ref{thm:mprinciple}, we only need to show that
		$u\not\equiv0$ in all connected components of 
		$\R^n\setminus\partial\Omega.$ Suppose, by contradiction, that
		there is $Z$ a connected components of 
		$\R^n\setminus\partial\Omega$ such that $u\equiv0$ in $Z.$
		Taking $\phi\in C_0^{\infty}(Z)$ as a test 
		function in \eqref{debil}, we get
		\[
			\hh(u,\phi)=0.
		\]
		Therefore
		\[
			\int_{\Omega}(u(x))^{p-1}
			\int_{Z}\dfrac{\phi(y)}{|x-y|^{n+sp}}dydx=0
			\qquad\forall \phi\in C_0^{\infty}(Z).
		\]
		Then $u=0$ in $\Omega.$ Thus, since $u$ s a non-negative 
		eigenfunction associated to $\lambda,$ we obtain that
		\[
			[u]_{s,p}=\hh(u,u)=
			\dfrac{1}{\C(1-s)}
			\left(\dfrac{\lambda}{\ve}
			\int_{\Omega_{\ve}}|u|^p\, dx- \int_{\Omega}|u|^p\, dx
			\right)=0.
		\]
		Hence $u\equiv0$ in $\R^n$ which is a contradiction since
		$u\not\equiv0$ in $\R^n.$
	\end{proof}

	Note that, if $u$ is an eigenfunction associated to 
	$\lambda_{1,\ve}(s,p)$ then  
	\[
		u_{+}(x)=\max\{u(x),0\}\not\equiv 0 
		\mbox{ or } 
		u_{-}(x)=\max\{-u(x),0\}\not\equiv0
	\] 
	in $\Omega_{\ve}$. If $u_{+}(x)\not\equiv0$ in 
	$\Omega_{\epsilon},$ then
	\begin{align*}
		\C(1-s)[u_+]_{s,p}^p + 
			\|u_+\|^p_{L^p(\Omega)} &
		\leq  
		\C(1-s)\hh(u,u_+) 
		+ \int_{\Omega}|u|^{p-2}uu_{+} dx\\
		& = \dfrac{\lam_{1,\ve}(s,p)}{\ve}\int_{\Omega_\ve}|u|^{p-2}uu_{+} 
		dx\\	
		& = \dfrac{\lam_{1,\ve}(s,p)}{\ve}\|u_+\|_{L^p(\Omega_\ve)}^p,
	\end{align*}
	that is, $u_+$ is a minimizer of \eqref{lam1}. Therefore $u_+$ is a
	non-negative eigenfunction associated to $\lambda_{1,\ve}(s,p).$ Then,
	by Lemma \ref{lema:positivo}, $u_{+}>0$ in $\Omega.$

	In the same manner we can see that if $u_{-}(x)\not\equiv0$
	in $\Omega_{\epsilon},$ then $u_{-}>0$ in $\Omega.$ 
	Thus the next theorem is proved.

	\begin{thm}\label{thm:signoconstante}
		Any eigenfunction associated to $\lambda_{1,\ve}(s,p)$ 
		has constant sign. 
	\end{thm}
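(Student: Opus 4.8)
The plan is to combine the computation carried out immediately before the statement with Lemma \ref{lema:positivo}. Let $u\in\W(\Omega)$ be an eigenfunction associated with $\lam_{1,\ve}(s,p)$ and write $u=u_+-u_-$ with $u_\pm\ge0$. Recall that every eigenfunction of \eqref{ecu} is not identically zero in $\Omega_\ve$, so at least one of $u_+$, $u_-$ is not identically zero in $\Omega_\ve$. The goal is to show that in fact exactly one of the two vanishes identically in $\R^n$, which is precisely the assertion that $u$ has constant sign.

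First I would treat the case $u_+\not\equiv0$ in $\Omega_\ve$. Testing \eqref{debil} with $v=u_+$ and using the elementary convexity inequality
\[
|v_+(x)-v_+(y)|^p\le |v(x)-v(y)|^{p-2}(v(x)-v(y))(v_+(x)-v_+(y)),
\]
together with the pointwise identities $|u|^{p-2}uu_+=u_+^p$ on $\Omega$ and on $\Omega_\ve$, one obtains
\[
\C(1-s)[u_+]_{s,p}^p+\|u_+\|_{L^p(\Omega)}^p\le \frac{\lam_{1,\ve}(s,p)}{\ve}\|u_+\|_{L^p(\Omega_\ve)}^p.
\]
Since $u_+\not\equiv0$ in $\Omega_\ve$, comparing with the variational characterization \eqref{lam1} forces equality, so $u_+$ is a minimizer of \eqref{lam1} and hence a non-negative eigenfunction associated with $\lam_{1,\ve}(s,p)$. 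By Lemma \ref{lema:positivo} we conclude $u_+>0$ a.e. in $\R^n$, which means $u=u_+>0$ a.e. in $\R^n$, i.e. $u_-\equiv0$ and $u$ is positive.

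The case $u_-\not\equiv0$ in $\Omega_\ve$ is handled identically after replacing $u$ by $-u$, and yields $u<0$ a.e. in $\R^n$. These two cases are exhaustive and mutually exclusive, since $u_+>0$ a.e. and $u_->0$ a.e. would contradict $u_+u_-\equiv0$; hence $u$ has constant sign. The only non-routine ingredients are the pointwise convexity inequality, which is what allows $u_+$ to enter as an admissible competitor with the correct energy inequality despite the nonlocality of $\lf$, and the strong minimum principle (Theorem \ref{thm:mprinciple}) that underlies Lemma \ref{lema:positivo}. Both are already available, so the present argument is essentially bookkeeping and I do not expect a genuine obstacle; the substantive work lies in those earlier results rather than in this deduction.
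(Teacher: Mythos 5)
Your argument is correct and coincides with the paper's own proof: the paper likewise tests \eqref{debil} with $u_{+}$, uses the same convexity inequality to show that $u_{+}$ (or $u_{-}$) is a minimizer of \eqref{lam1}, and then invokes Lemma \ref{lema:positivo} to conclude positivity. The only (harmless) difference is that you spell out the mutual exclusivity of the two cases, which the paper leaves implicit.
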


	A key ingredient in the next sections is the simplicity of the 
	first eigenvalue $\lambda_{1,\ve}(s,p)$. 
	In order to prove this result we need the 
	following Picone-type identity (see Lemma~6.2  in \cite{Amghibech}).

	\begin{lema}\label{lem:laux1}  
		Let $p\in(1,\infty)$. For $u,v\colon\mathbb{R}^n\to\mathbb{R}$ such
		 that $u \ge 0$ and $v>0$,  we have
		\[
			L(u,v)\ge0 \quad \mbox{in } \R^n \times \R^n,
		\]
		where
		\[
	  		L(u,v)(x,y) = |u(x)-u(y)|^p-|v(x)-v(y)|^{p-2}(v(x) - v(y))
	  		\left(\dfrac{u^p(x)}{v^{p-1}(x)} 
	  		- \dfrac{u^p(y)}{v^{p-1}(y)}\right). 
		\]
		The equality holds if and only if $u=kv$ a.e. in 
		$\mathbb{R}^n$ for some constant $k$.
	\end{lema}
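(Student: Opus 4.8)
The plan is to reduce the statement to a pointwise algebraic inequality and then to a one--variable calculus problem. Since $L(u,v)$ is defined pointwise, it suffices to prove that for all real numbers $a,b\ge0$ and $c,d>0$
\[
|a-b|^p\;\ge\;|c-d|^{p-2}(c-d)\left(\frac{a^p}{c^{p-1}}-\frac{b^p}{d^{p-1}}\right),
\]
with equality if and only if $a/c=b/d$; applying this with $a=u(x)$, $b=u(y)$, $c=v(x)$, $d=v(y)$ then gives $L(u,v)(x,y)\ge0$. A useful preliminary remark is that $L(u,v)(x,y)=L(u,v)(y,x)$: under the swap $x\leftrightarrow y$ both the factor $|v(x)-v(y)|^{p-2}(v(x)-v(y))$ and the bracket $\frac{u^p(x)}{v^{p-1}(x)}-\frac{u^p(y)}{v^{p-1}(y)}$ change sign, so their product is unchanged. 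Hence I may assume $c\ge d$, in which case $|c-d|^{p-2}(c-d)=(c-d)^{p-1}\ge0$.

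Setting $r:=d/c\in(0,1]$ and using $(c-d)^{p-1}/c^{p-1}=(1-r)^{p-1}$ and $(c-d)^{p-1}/d^{p-1}=((1-r)/r)^{p-1}$, the inequality rewrites as
\[
|a-b|^p+b^p\left(\frac{1-r}{r}\right)^{p-1}\;\ge\;a^p(1-r)^{p-1}.
\]
The case $a=0$ is immediate; when $a>0$ this is homogeneous of degree $p$ in $(a,b)$, so dividing by $a^p$ and putting $t:=b/a\ge0$ reduces the matter to showing
\[
g(t):=|1-t|^p+\left(\frac{1-r}{r}\right)^{p-1}t^p\;\ge\;(1-r)^{p-1}\qquad\text{for every }t\ge0,
\]
together with the characterization of equality.

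To analyze $g$ I would argue as follows. If $r=1$ the second term vanishes and $g(t)=|1-t|^p\ge0=(1-r)^{p-1}$, with equality only at $t=1=r$; so assume $r<1$. On $[1,\infty)$ the function $g$ is strictly increasing, hence $g(t)\ge g(1)=((1-r)/r)^{p-1}>(1-r)^{p-1}$ there. On $(0,1)$,
\[
g'(t)=-p(1-t)^{p-1}+p\left(\frac{1-r}{r}\right)^{p-1}t^{p-1},
\]
and, taking $(p-1)$-th roots of positive quantities, $g'(t)=0$ precisely when $\frac{1-r}{r}\,t=1-t$, i.e. $t=r$; since $g'(0^+)=-p<0$ and $g'(1^-)=p((1-r)/r)^{p-1}>0$, the point $t=r$ is the unique critical point in $(0,1)$ and the global minimizer of $g$ on $[0,\infty)$. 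Finally
\[
g(r)=(1-r)^p+\left(\frac{1-r}{r}\right)^{p-1}r^p=(1-r)^{p-1}\big[(1-r)+r\big]=(1-r)^{p-1},
\]
which proves the inequality, with equality exactly when $t=r$, i.e. $b/a=d/c$, i.e.\ $u(x)/v(x)=u(y)/v(y)$.

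It remains to upgrade the pointwise equality case. If $L(u,v)=0$ a.e.\ on $\R^n\times\R^n$, then for a.e.\ $(x,y)$ we are in the equality case above, so $u(x)/v(x)=u(y)/v(y)$ for a.e.\ $(x,y)$; by Fubini there is a point $y_0$ for which this holds for a.e.\ $x$, and then $u/v\equiv u(y_0)/v(y_0)=:k$ a.e., i.e.\ $u=kv$ a.e. The converse is a one--line substitution. The only genuinely delicate point is the calculus step — checking that the interior critical point $t=r$ is the global minimum of $g$ on all of $[0,\infty)$; note that for $1<p<2$ the quantity $g''$ blows up at $t=0$ and $t=1$, but this only reinforces that the endpoints are not minima. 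Everything else (the symmetry of $L$, the homogeneity reduction, and the handling of the degenerate configurations $a=0$, $b=0$, $r=1$, which form a null set and therefore do not affect the ``equality a.e.'' statement) is routine bookkeeping.
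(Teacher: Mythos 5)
Your argument is correct. The reduction to the scalar inequality, the symmetrization allowing $c\ge d$, the homogeneity reduction to the one-variable function $g$, the identification of $t=r$ as the unique interior critical point with $g(r)=(1-r)^{p-1}$, and the Fubini upgrade of the pointwise equality case to $u=kv$ a.e.\ all check out (including the boundary cases $a=0$, $r=1$, and the comparison $g(1)=\bigl(\tfrac{1-r}{r}\bigr)^{p-1}>(1-r)^{p-1}$ on $[1,\infty)$). The paper does not prove this lemma at all: it simply cites Lemma~6.2 of Amghibech, whose proof (like the standard discrete Picone arguments of Frank--Seiringer type) rests on the convexity of $t\mapsto|t|^p$, or equivalently on Young's inequality applied to the cross terms $|v(x)-v(y)|^{p-1}\,u^p(x)/v^{p-1}(x)$. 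Your calculus route is more elementary and entirely self-contained, at the cost of a case analysis; the convexity route is shorter and makes the equality case ($u/v$ constant) transparent as the equality case of a strictly convex inequality. One cosmetic inaccuracy: your closing remark that the configurations $a=0$, $b=0$, $r=1$ ``form a null set'' is not true in general (e.g.\ $u$ may vanish or $v$ may be constant on sets of positive measure), but this is harmless because you in fact handled those cases directly and obtained the same equality characterization $u(x)/v(x)=u(y)/v(y)$ there, so the a.e.\ argument is unaffected; you should simply delete that clause.
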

	
	\begin{thm}\label{thm:autoval1}
  			Let $\Omega \subset \R^n$ be a bounded open connected 
  			set with Lipschitz boundary. 
  			Assume that $u$ is a positive eigenfunction corresponding to 
  			$\lambda_{1,\ve}(s,p).$ 
  			Then if $\lambda > 0$ is such that there exists a 
  			non-negative eigenfunction $v$ of \eqref{ecu} with eigenvalue 
  			$\lambda,$ then $\lambda = \lambda_{1,\ve}(s,p)$ and 
  			there exists $c \in \R$ such that $v = c u$ a.e. in 
  			$\R^n$. 
	\end{thm}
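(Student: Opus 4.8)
The plan is to carry out the classical Picone argument of Lemma~\ref{lem:laux1} in this nonlocal setting: test the equation solved by $v$ with a regularization of $u^p/v^{p-1}$, compare the outcome with the Rayleigh characterization \eqref{lam1} of $\lambda_{1,\ve}(s,p)$, and extract $\lambda=\lambda_{1,\ve}(s,p)$ together with the proportionality of $u$ and $v$ from the equality case of Lemma~\ref{lem:laux1}. Two preliminary facts are used. By Lemma~\ref{lema:cotainfty}, $u,v\in L^\infty(\Omega)$, and since $v\ge0$ satisfies $\nn v=0$ in $\Omega^c$, looking at the integrand in \eqref{deriv} shows $\|v\|_{L^\infty(\R^n)}=\|v\|_{L^\infty(\Omega)}$ (if $v$ exceeded $\|v\|_{L^\infty(\Omega)}$ on a positive-measure subset of $\Omega^c$, then $\nn v$ would be a.e.\ positive there); the same holds for $u$, so $u,v\in L^\infty(\R^n)$. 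Also, by Lemma~\ref{lema:positivo}, $v>0$ in $\R^n$.

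For $\delta>0$ I would take $\varphi_\delta:=u^p/(v+\delta)^{p-1}$ as test function in the weak formulation \eqref{debil} of the equation solved by $v$ with eigenvalue $\lambda$. As $u,v$ are bounded and $v+\delta\ge\delta$, the map $(a,b)\mapsto a^p(b+\delta)^{1-p}$ is Lipschitz on the range of $(u,v)$, hence $|\varphi_\delta(x)-\varphi_\delta(y)|\le C_\delta(|u(x)-u(y)|+|v(x)-v(y)|)$ and $\varphi_\delta\in\W(\Omega)$. Plugging it in gives
\[
\C(1-s)\,\hh(v,\varphi_\delta)+\int_\Omega\Big(\tfrac{v}{v+\delta}\Big)^{p-1}u^p\,dx=\frac{\lambda}{\ve}\int_{\Omega_\ve}\Big(\tfrac{v}{v+\delta}\Big)^{p-1}u^p\,dx,
\]
while Lemma~\ref{lem:laux1} applied to the pair $u\ge0$, $v+\delta>0$, divided by $|x-y|^{n+sp}$ and integrated over $\R^{2n}\setminus(\Omega^c)^2$, yields
\[
\hh(u,u)-\hh(v,\varphi_\delta)=\iint_{\R^{2n}\setminus(\Omega^c)^2}\frac{L(u,v+\delta)(x,y)}{|x-y|^{n+sp}}\,dx\,dy\ge0.
\]

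Letting $\delta\to0^+$ in the previous two displays — the two mass terms converge by monotone convergence because $(v/(v+\delta))^{p-1}u^p\nearrow u^p$ — I obtain
\[
\frac{\lambda}{\ve}\|u\|_{L^p(\Omega_\ve)}^p\le\C(1-s)[u]_{s,p}^p+\|u\|_{L^p(\Omega)}^p=\frac{\lambda_{1,\ve}(s,p)}{\ve}\|u\|_{L^p(\Omega_\ve)}^p,
\]
the last equality since $u$ is an eigenfunction for $\lambda_{1,\ve}(s,p)$. As $u>0$, cancelling $\|u\|_{L^p(\Omega_\ve)}^p>0$ gives $\lambda\le\lambda_{1,\ve}(s,p)$; since $v>0$ is a competitor in \eqref{lam1} whose quotient equals $\lambda$, also $\lambda\ge\lambda_{1,\ve}(s,p)$. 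Hence $\lambda=\lambda_{1,\ve}(s,p)$, and then all the inequalities above become asymptotic equalities, so $\hh(v,\varphi_\delta)\to\hh(u,u)$. Fatou applied to $0\le L(u,v+\delta)\to L(u,v)$ therefore forces $L(u,v)=0$ a.e.\ on $\R^{2n}\setminus(\Omega^c)^2$. The equality case of Lemma~\ref{lem:laux1} (the pointwise identity $L(u,v)(x,y)=0$ means $u(x)/v(x)=u(y)/v(y)$) then makes $u/v$ a.e.\ constant on the connected set $\Omega$, say $u=kv$ a.e.\ in $\Omega$ with $k>0$; using the vanishing of $L(u,v)$ on $\Omega\times\Omega^c$ and $v>0$ this propagates to $\Omega^c$, so $u=kv$ a.e.\ in $\R^n$, i.e.\ $v=cu$ with $c=1/k$.

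The main obstacle is exactly the admissibility of the test function: guaranteeing $\varphi_\delta\in\W(\Omega)$ (in particular $\hh(\varphi_\delta,\varphi_\delta)<\infty$, which involves the cross region $\Omega\times\Omega^c$) is why one must first upgrade boundedness of the eigenfunctions from $\Omega$ to $\R^n$, and why one regularizes the denominator by $v+\delta$ instead of testing directly with $u^p/v^{p-1}$, which need not lie in $\W(\Omega)$ when $v$ degenerates near $\partial\Omega$. Once this is settled, the passage to the limit and the identification of the equality case are routine (monotone/dominated convergence and Fatou).
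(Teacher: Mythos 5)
Your proposal is correct and follows essentially the same route as the paper: regularize the denominator ($v+\delta$ versus the paper's $v+\nicefrac1k$), verify that $u^p/(v+\delta)^{p-1}$ lies in $\W(\Omega)$ via a Lipschitz-type difference estimate, test the equation for $v$ with it, combine with the Picone inequality of Lemma~\ref{lem:laux1} and the Rayleigh characterization of $\lambda_{1,\ve}(s,p)$, pass to the limit by monotone convergence and Fatou, and invoke the equality case. Your preliminary observation that $\nn v=0$ in $\Omega^c$ upgrades the $L^\infty(\Omega)$ bound of Lemma~\ref{lema:cotainfty} to an $L^\infty(\R^n)$ bound is a point the paper leaves implicit, and it is a welcome addition since the admissibility estimate genuinely needs $u$ bounded on all of $\R^n$.
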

	\begin{proof} 
 		 Since $\lambda_{1,\ve}(s,p)$ is the first eigenvalue, we have that 
 		 $\lambda_{1,\ve}(s,p) \le \lambda$. 
 		 On the other hand, by Lemma \ref{lema:positivo}, $v>0$ in 
 		 $\R^n.$
 		 
 		 For $k \in \N$ take
 		 $v_k\coloneqq v + \nicefrac1{k}$. 
 		 We begin by proving that $w_{k} \coloneqq u^{p} / v_k^{p-1}
 		 \in\W(\Omega)$. 
        First observe that $w_{k}\in L^{p}(\Omega),$ due to 
        $u\in L^{\infty}(\Omega)$, see Lemma \ref{lema:cotainfty}. 
        Now, for all $(x,y) \in \R^n \times \R^n$ we have
  		\begin{align*}
    		|w_{k}(x)-w_{k}(y)| =& 
    		\left| \dfrac{u^p(x) - u^p(y)}{v_k^{p-1}(x)} - 
    		\dfrac{u^p(y) 
    		\left( v_k^{p-1}(x) - v_k^{p-1}(y) 
    		\right)}{v_k^{p-1}(x) v_k^{p-1}(y)} \right| \\
			\le &\ k^{p-1} \left|u^p(x) - u^p(y) \right| + 
			\|u\|_{\infty}^p 
			\dfrac{\left|v_k^{p-1}(x) - v_k^{p-1}(y) \right|}
			{v_k^{p-1}(x)v_k^{p-1}(y)} \\
			\le&\ p k^{p-1} (u^{p-1}(x) + u^{p-1}(y))|u(x) - u(y)| \\
			& + (p-1)\|u\|_{\infty}^p 
			\dfrac{v_k^{p-2}(x) + v_k^{p-2}(y)}{v_k^{p-1}(x) v_k^{p-1}(y)}
			|v_k(x)-v_k(y)|\\
			\le &\ 2pk^{p-1}\|u\|_\infty^p |u(x)-u(y)| \\
			& + (p-1) \|u\|_{\infty}^p
			\left(\dfrac1{v_k(x) v_k^{p-1}(y)} 
			+ \dfrac1{v_k^{p-1}(x) v_k(y)} \right)|v(x)-v(y)| \\
	 		\le &\ C(k,\|u\|_\infty,p)
			\left( |u(x)-u(y)|+|v(x)-v(y)| \right).
	  	\end{align*} 
  		As $u,v \in \W(\Omega)$, we deduce that 
  		$w_k \in W(\Omega)$ for all $k \in \N$.

  		Recall that $u,v\in\W(\Omega)$ are two eigenfunctions of problem  		
  		\eqref{ecu}  with eigenvalue $\lambda_1(s,p)$ 
  		and $\lambda$ respectively. Then, by using the previous lemma, 
  		we deduce that 
  		\begin{align*}
    		0 \le& 
    		\C(1-s)\iint_{\R^{2n}\setminus{(\Omega^c)^2}} 
    		\dfrac{L(u,v_k)(x,y)}{|x-y|^{n+sp}} \, d x d y \\
    		\le &\ \C(1-s)\iint_{\R^{2n}\setminus{(\Omega^c)^2}} 
    		\dfrac{|u(x)-u(y)|^p}{|x-y|^{n+sp}} \, d x d y \\ 
    		& - \C(1-s) 
    		\iint_{\R^{2n}\setminus{(\Omega^c)^2}}
    		\dfrac{|v(x)-v(y)|^{p-2}(v(x)-v(y))}{|x-y|^{n+sp}}	\\
    		& \qquad \qquad \qquad \qquad \times \left(\dfrac{u^p(x)}{v_k^{p-1}(x)} 
    		- \dfrac{u^p(y)}{v_k^{p-1}(y)} \right) \, dxdy \\
			\le &\ \C(1-s)\iint_{\R^{2n}\setminus{(\Omega^c)^2}} 
    		\dfrac{|u(x)-u(y)|^p}{|x-y|^{n+sp}} \, d x d y 
    		\\ & - \dfrac{\lambda}{\ve} \int_{\Omega_\ve} 
			v^{p-1} \dfrac{u^p}{v_k^{p-1}}\, d x+\int_{\Omega}v^{p-1}
			\dfrac{u^p}{v_k^{p-1}}\, d x\\
			\le &\dfrac{\lambda_{1,\ve}(s,p)}{\ve} 
			\int_{\Omega_\ve} u^p \, d x - \int_{\Omega}|u|^p \, dx  
			- \dfrac{\lambda}{\ve} \int_{\Omega_\ve} 
			v^{p-1} \dfrac{u^p}{v_k^{p-1}}\, d x+\int_{\Omega}v^{p-1}
			\dfrac{u^p}{v_k^{p-1}}\, d x.
		\end{align*}
		Taking $k \to \infty$ and using Fatou's lemma and 
		the dominated convergence theorem, we infer that
		\[
      		\iint_{\R^{2n}\setminus{(\Omega^c)^2}} 
     		 \dfrac{L(u,v)(x,y)}{|x-y|^{n+sp}} \, dx dy = 0
		\]
		(recall that $\lambda_{1.\ve}(s,p)\le \lambda$).
		Therefore, by the previous lemma, $L(u,v)(x,y)=0$ a.e. in
		$\R^{2n}\setminus{(\Omega^c)^2}$
		and $u=cv$ for some constant $c>0$.
\end{proof}
%%%%%%%%%%%%%%%%%%%%%%%%%%%%%%%%%%%%%%%%%%%%%%%%%%%%%%%%%%%%%%%%%%%%%%%%%%%%%%%%
    
We will need the following lemma.
	
	\begin{lema}\label{lema:auxiaut}
				Let $\ve>0$.
				If $u$ is an eigenfunction associates to $\lambda
				>\lambda_{1,\ve}(s,p)$ there exist 
				$C>0$ and $\alpha>0$ independent on $\lambda,$ 
				$u$ and $\ve$ such that
				\[
					\left(\dfrac{C\ve}{\lambda}
					\right)^\alpha\le|\Omega^{\pm}|.
				\] 
				 Here $\Omega^+=\{x\in\Omega\colon u(x)>0\},$ 
				 and $\Omega^-=\{x\in\Omega\colon u(x)<0\}.$ 
	\end{lema}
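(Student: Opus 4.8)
The plan is to test the weak formulation \eqref{debil} with $v=u_{+}$ and then estimate the resulting right-hand side by Hölder's inequality together with the Sobolev embedding of Theorem \ref{teo:inclucomp}. First, since $|u_{+}(x)-u_{+}(y)|\le|u(x)-u(y)|$ (so $\hh(u_{+},u_{+})\le\hh(u,u)$) and $\|u_{+}\|_{L^p(\Omega)}\le\|u\|_{L^p(\Omega)}$, we have $u_{+}\in\W(\Omega)$ and it is an admissible test function. Using the pointwise inequality $|u_{+}(x)-u_{+}(y)|^p\le|u(x)-u(y)|^{p-2}(u(x)-u(y))(u_{+}(x)-u_{+}(y))$ already recorded in the proof of Lemma \ref{lema:cotainfty}, together with the identity $|u|^{p-2}u\,u_{+}=u_{+}^{p}$, testing \eqref{debil} with $v=u_{+}$ yields
\[
	\C(1-s)[u_{+}]_{s,p}^{p}+\|u_{+}\|_{L^p(\Omega)}^{p}\le\frac{\lambda}{\ve}\,\|u_{+}\|_{L^p(\Omega_\ve)}^{p}.
\]

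Next I would bound the right-hand side in terms of $|\Omega^{+}|$. Since $\Omega_\ve\subset\Omega$, the set $\{x\in\Omega_\ve:u(x)>0\}$ is contained in $\Omega^{+}$, so by Hölder's inequality, the continuous embedding $W^{s,p}(\Omega)\hookrightarrow L^{p_s^\star}(\Omega)$ of Theorem \ref{teo:inclucomp}, and $\|u_{+}\|_{W^{s,p}(\Omega)}\le\|u_{+}\|_{s,p}$, one gets in the case $sp<n$
\[
	\|u_{+}\|_{L^p(\Omega_\ve)}^{p}\le\|u_{+}\|_{L^{p_s^\star}(\Omega)}^{p}\,|\Omega^{+}|^{sp/n}\le C_{1}\,\|u_{+}\|_{s,p}^{p}\,|\Omega^{+}|^{sp/n},
\]
with $C_{1}=C_{1}(n,s,p,\Omega)$; in the case $sp=n$ one replaces $p_s^\star$ by an arbitrary $r>p$ and obtains the exponent $1-\nicefrac{p}{r}$ in place of $\nicefrac{sp}{n}$, and in the case $sp>n$ one uses $W^{s,p}(\Omega)\hookrightarrow L^{\infty}(\Omega)$ and obtains exponent $1$. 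In all cases there are $\theta=\theta(n,s,p)\in(0,1]$ and $C_{1}$ with $\|u_{+}\|_{L^p(\Omega_\ve)}^{p}\le C_{1}\|u_{+}\|_{s,p}^{p}|\Omega^{+}|^{\theta}$. Combining this with the previous display and the trivial lower bound $\C(1-s)[u_{+}]_{s,p}^{p}+\|u_{+}\|_{L^p(\Omega)}^{p}\ge\min\{\C(1-s),1\}\,\|u_{+}\|_{s,p}^{p}=:c_{s}\,\|u_{+}\|_{s,p}^{p}$ gives $c_{s}\|u_{+}\|_{s,p}^{p}\le\frac{\lambda}{\ve}C_{1}\|u_{+}\|_{s,p}^{p}|\Omega^{+}|^{\theta}$.

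Finally I would divide by $\|u_{+}\|_{s,p}^{p}$, which is the step requiring care: I need $u_{+}\not\equiv0$. If $\|u_{+}\|_{s,p}=0$ then $u_{+}\equiv0$ a.e. in $\R^{n}$ (from $\|u_{+}\|_{L^p(\Omega)}=0$ and $\hh(u_{+},u_{+})=0$), hence $-u\ge0$ is a non-negative eigenfunction with eigenvalue $\lambda$; by Lemma \ref{lema:positivo} it is positive in $\R^{n}$, so the simplicity statement of Theorem \ref{thm:autoval1} forces $\lambda=\lambda_{1,\ve}(s,p)$, contradicting $\lambda>\lambda_{1,\ve}(s,p)$. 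Therefore $\|u_{+}\|_{s,p}>0$, and dividing gives $|\Omega^{+}|^{\theta}\ge c_{s}\ve/(C_{1}\lambda)$, i.e. $|\Omega^{+}|\ge(C\ve/\lambda)^{1/\theta}$ with $C=c_{s}/C_{1}$ and $\alpha=1/\theta$, both depending only on $s,p,n,\Omega$ and in particular independent of $\lambda$, $u$ and $\ve$. Running the same argument with $-u$ in place of $u$ (whose positive part is $u_{-}$) gives the corresponding bound for $|\Omega^{-}|$. I expect the only delicate points to be this dichotomy — excluding $u_{+}\equiv0$ by invoking the simplicity of $\lambda_{1,\ve}(s,p)$ — and the bookkeeping that ensures all constants are uniform in $\lambda$, $u$ and $\ve$.
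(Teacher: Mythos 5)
Your proposal is correct and follows essentially the same route as the paper: test with $u_{+}$, use the pointwise inequality to get $\C(1-s)[u_{+}]_{s,p}^{p}+\|u_{+}\|_{L^p(\Omega)}^{p}\le\frac{\lambda}{\ve}\|u_{+}\|_{L^p(\Omega_\ve)}^{p}$, then combine the Sobolev embedding with H\"older's inequality to extract the factor $|\Omega^{+}|^{\theta}$ and divide. Your explicit justification that $u_{+}\not\equiv0$ (via Lemma \ref{lema:positivo} and Theorem \ref{thm:autoval1}) spells out what the paper compresses into the remark that $u$ must change sign when $\lambda>\lambda_{1,\ve}(s,p)$.
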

	\begin{proof} 
		Let $u^+(x)=\max\{0,u(x)\}.$ 
		Since $u$ is an eigenfunction associates to 
		$\lambda>\lambda_{1,\ve}(s,p),$ $u$ changes sign then $u^+
		\not\equiv0.$
		In addition,
		\begin{equation}\label{eq:aumedes1}
			\begin{aligned}
				\min\left\{\C(1-s),1\right\}
				\|u^{+}\|_{s,p}^p&\le
				\C(1-s)[u^+]_{s,p}^p + 
					\|u^+\|^p_{L^p(\Omega)}\\ 
					&\leq  
				\C(1-s)\hh(u,u^+) 
				+ \int_{\Omega}|u|^{p-2}uu^{+} dx\\
				& = \dfrac{\lam}{\ve}\int_{\Omega_\ve}|u|^{p-2}uu^{+} 
				dx\\	
				& = \dfrac{\lam}{\ve}\|u^+\|_{L^p(\Omega_\ve)}^p.
			\end{aligned}
		\end{equation}
	
		On the other hand, by Sobolev embedding theorem, there exists 
	    a constant $C$ independent on $\lambda,$ $u$ and $\ve$ such
		that
		\[
			\|u^+\|_{L^q(\Omega)}
			\le C\|u^+\|_{s,p}
	    \]
	    where $1<q<p_{s}^\star.$ 
		Then, by \eqref{eq:aumedes1} and H\"older's inequality, 
		there exists a constant $C$ independent on $\lambda,$ $u$ and $\ve$ such
		that
		\[
			\|u^+\|^p_{L^{q}(\Omega)}
			\le C\dfrac{\lambda}\ve
			\|u^{+}\|_{L^{q}(\Omega)}^{p}
			   |\Omega^+|^{\frac{q-p}{q}}
		   \quad\forall p<q<p_{s}^\star.
	    \]
	    Fix any $p<q<p_{s}^\star$ and take $\alpha=\dfrac{q}{q-p}$
	    \[
			\left(\dfrac{\ve}{C\lambda}\right)^{\alpha}\le|\Omega^{+}|.
    	\]

		In order to prove the second inequality, it will suffice 
		to proceed as above, using the function $u^-(x)=\max\{0,-u(x)\}$ 
		instead of
		$u^+.$
	\end{proof}

	\begin{thm}\label{thm:isolated}
		For each fixed value $\ve>0$, 	$\lam_{1,\ve}(s,p)$ is isolated.
	\end{thm}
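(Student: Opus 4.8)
The plan is to argue by contradiction. Assume $\lam_{1,\ve}(s,p)$ is not isolated. Since every eigenvalue of \eqref{ecu} is at least $\lam_{1,\ve}(s,p)$, there must then exist a sequence of eigenvalues $\lam_k > \lam_{1,\ve}(s,p)$ with $\lam_k \to \lam_{1,\ve}(s,p)$ and associated eigenfunctions $u_k$, which I normalize by $\|u_k\|_{L^p(\Omega_\ve)} = \ve$. Taking $v = u_k$ in the weak formulation \eqref{debil} gives $\C(1-s)[u_k]_{s,p}^p + \|u_k\|_{L^p(\Omega)}^p = \lam_k \ve^{p-1}$, and since the $\lam_k$ are bounded this produces a uniform bound $\|u_k\|_{s,p} \le C$. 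By the compact embedding of Theorem \ref{teo:inclucomp}, after passing to a subsequence I have $u_k \cd u$ weakly in $\W(\Omega)$ and $u_k \to u$ strongly in $L^p(\Omega)$; in particular $\|u\|_{L^p(\Omega_\ve)} = \ve$, so $u$ is not identically zero on $\Omega_\ve$.

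The next step is to identify $u$ as an eigenfunction associated to $\lam_{1,\ve}(s,p)$. Weak lower semicontinuity of the Gagliardo-type seminorm together with the strong $L^p$ convergence yields $\C(1-s)[u]_{s,p}^p + \|u\|_{L^p(\Omega)}^p \le \liminf_k \big( \C(1-s)[u_k]_{s,p}^p + \|u_k\|_{L^p(\Omega)}^p \big) = \lim_k \lam_k \ve^{p-1} = \lam_{1,\ve}(s,p)\,\ve^{p-1}$, while the definition \eqref{lam1} (using $\|u\|_{L^p(\Omega_\ve)}^p = \ve^p \ne 0$) gives the opposite inequality. Hence $u$ is a minimizer in \eqref{lam1}, and therefore an eigenfunction for $\lam_{1,\ve}(s,p)$. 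By Theorem \ref{thm:signoconstante} it has constant sign, and replacing $u_k$ by $-u_k$ if necessary I may assume $u \ge 0$; then Lemma \ref{lema:positivo} gives $u > 0$ a.e. in $\Omega$.

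To reach the contradiction I look at the negative sets $\Omega_k^- = \{x \in \Omega : u_k(x) < 0\}$. On $\Omega_k^-$ one has $|u_k - u| = u - u_k \ge u > 0$, so for every $\delta > 0$ one has $\Omega_k^- \subset \{x \in \Omega : u(x) \le \delta\} \cup \{x \in \Omega : |u_k(x) - u(x)| > \delta\}$; the last set has vanishing measure as $k \to \infty$ by strong $L^p$ convergence, and $|\{x \in \Omega : u(x) \le \delta\}| \to 0$ as $\delta \to 0^+$ because $u > 0$ a.e. Thus $|\Omega_k^-| \to 0$. On the other hand, since $\lam_k \ne \lam_{1,\ve}(s,p)$, Theorem \ref{thm:autoval1} forces each $u_k$ to change sign, so Lemma \ref{lema:auxiaut} applies and gives $|\Omega_k^-| \ge (C\ve/\lam_k)^\alpha$ with $C, \alpha$ independent of $k$; since $\lam_k \to \lam_{1,\ve}(s,p) < \infty$, the right-hand side stays bounded away from $0$, contradicting $|\Omega_k^-| \to 0$. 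This contradiction shows that $\lam_{1,\ve}(s,p)$ is isolated.

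I expect the two delicate points to be (i) verifying that the weak limit $u$ is genuinely an eigenfunction for $\lam_{1,\ve}(s,p)$ — which I handle via lower semicontinuity and the variational characterization \eqref{lam1}, thereby avoiding passing to the limit directly inside the nonlinear operator — and (ii) showing $|\Omega_k^-| \to 0$, where strict positivity of $u$ from Lemma \ref{lema:positivo} is essential. The substantive ingredient that makes the argument work is Lemma \ref{lema:auxiaut}: the uniform lower bound on the nodal volumes is exactly what rules out a sequence of sign-changing eigenfunctions converging to a one-signed first eigenfunction.
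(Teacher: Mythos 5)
Your proof is correct and follows essentially the same route as the paper: contradiction via a sequence $\lam_k\searrow\lam_{1,\ve}(s,p)$, normalization and compactness to extract a limit that is a signed first eigenfunction, and then the uniform nodal-volume bound of Lemma \ref{lema:auxiaut} against the shrinking of the negative sets. The only (immaterial) difference is in the last step, where you show $|\Omega_k^-|\to 0$ directly via Chebyshev's inequality and the strict positivity of $u$, whereas the paper invokes Egorov's theorem.
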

	
	\begin{proof}
		From its definition, we have
		that $\lambda_{1,\ve}(s,p)$ is left--isolated.
	
		To prove that $\lambda_{1,\ve}(s,p)$ is right--isolated, 
		we argue by contradiction. We assume that there exists a  sequence 
		of  eigenvalues $\{\lambda_k\}_{k\in\N}$ such that  
		$\lambda_k>\lambda_{1,\ve}(s,p)$ and $\lambda_k\searrow 
		\lambda_{1,\ve}(s,p)$ as $k\to +\infty.$  
		Let $u_k$ be an eigenfunction associated to $\lambda_k,$ 
		we can assume that 
		\[
			\frac1\ve\int_{\Omega_{\ve}}|u_k(x)|^p\,dx=1.
		\] 
		Then $\{u_k\}_{k\in\N}$
		is bounded in $\W(\Omega)$ and therefore
		we can extract a subsequence 
		(that we still denoted by $\{u_k\}_{k\in\N}$) 
		such that 
		\begin{align*}
			u_k\rightharpoonup u&\quad \mbox{ weakly in } \W(\Omega),\\
			u_k\to u &\quad \mbox{ strongly in } L^p(\Omega).
		\end{align*}
		Then 
		\[
			\dfrac1\ve\int_{\Omega_{\ve}}|u(x)|^p\,dx=1
		\]
		and
		\begin{align*}
			\C(1-s)[u]_{s,p}^p+\|u\|_{L^p(\Omega)}^p&
			\le\C(1-s)
			\liminf_{k\to+\infty}[u_k]_{s,p}^p+\|u\|_{L^p(\Omega)}^{p}\\
					&=\lim_{k\to+\infty}\lambda_k=\lambda_{1,\ve}(s,p).
		\end{align*}
		Hence, $u$ is an eigenfunction associates to $\lambda_{1,\ve}(s,p).$ 
		By Theorem \ref{thm:signoconstante}, we can assume that $u>0.$
	
		On the other hand, by the Egorov's theorem, for any $\delta>0$
		there exists a subset $A_\delta$ of $\Omega$ such that 
		$|A_\delta|<\delta$ and $u_k\to u>0$ uniformly in 
		$\Omega\setminus A_{\delta}.$ This contradicts the fact that,
		by Lemma \ref{lema:auxiaut},
		\[
						\left(\dfrac{C\ve}{\lambda_k}
						\right)^\alpha
						\le|\{x\in\Omega\colon u_k(x)<0\}|.
		\]
		This proves the theorem.		
	\end{proof}

%%%%%%%%%%%%%%%%%%%%%%%%%%%%%%%%%%%%%%%%%%%%%%%%%%%%%%%%%%%%%%%%%%%%%%%%%%%
\section{The limit of $\lam_{1,1-s}(s,p)$ as $s\to 1^-$.} \label{sect-limite}
%%%%%%%%%%%%%%%%%%%%%%%%%%%%%%%%%%%%%%%%%%%%%%%%%%%%%%%%%%%%%%%%%%%%%%%%%%%
	
	Throughout this section, we assume that $\Omega$ is a 
	smooth bounded domain and take $\ve=1-s.$
	
	Here we analyze the behavior of $\lam_{1,1-s}(s,p)$ as $s\to 1^-$. 
	For simplicity, we omit the subscript $1-s$ and we just write
	$\lam_1(s,p)$.

	\medskip
	
	First we show that 
	\[
		\limsup_{s\to1^-}\lambda_1(s,p)\le \lambda_1(p).
	\]
	For this purpose, we state some convergence results. 
	We start with the following lemma.
	
	\begin{lema}  \label{int1}
		Let $\Omega$ be a domain in $\R^n$ with Lipschitz boundary
		and $p\in(1,\infty)$.
		If $u\in W^{1,p}(\Omega)$ then
		\begin{equation*}
		\lim_{\ve \to 0^+}\frac{1}{\ve} \int_{\Omega_\ve} |u|^p\, dx =\int_{\partial \Omega} |u|^p \, dS.
		\end{equation*}
	\end{lema}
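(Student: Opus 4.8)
The plan is to reduce the claim to the standard trace formula
$$
\lim_{\ve\to 0^+}\frac{1}{\ve}\int_{\Omega_\ve}|u|^p\,dx=\int_{\partial\Omega}|u|^p\,dS
$$
by a combination of the coarea formula and the smoothness of the boundary. First I would use that, since $\partial\Omega$ is Lipschitz (and locally the graph of a Lipschitz function), the signed distance function $d(x)=\mathrm{dist}(x,\partial\Omega)$ is Lipschitz with $|\nabla d|=1$ a.e., so that $\Omega_\ve=\{x\in\Omega\colon d(x)\le\ve\}$ is exactly the sublevel set $\{0<d<\ve\}$ up to a null set. Then by the coarea formula,
$$
\int_{\Omega_\ve}|u|^p\,dx=\int_0^\ve\left(\int_{\{d=t\}}|u|^p\,d\mathcal H^{n-1}\right)dt,
$$
so that
$$
\frac{1}{\ve}\int_{\Omega_\ve}|u|^p\,dx=\frac{1}{\ve}\int_0^\ve g(t)\,dt,\qquad g(t):=\int_{\{d=t\}}|u|^p\,d\mathcal H^{n-1}.
$$
The result then follows if $g(t)\to g(0)=\int_{\partial\Omega}|u|^p\,dS$ as $t\to 0^+$, by the elementary fact that Cesàro averages of a function that is right-continuous at $0$ converge to its value there.

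The second step is to establish that convergence of the surface integrals $g(t)\to g(0)$. For $u\in C^1(\overline\Omega)$ this is immediate: the level sets $\{d=t\}$ converge to $\partial\Omega$ smoothly (or, for Lipschitz domains, one can flatten the boundary in a finite covering by charts and use that the graphs $\{x_n=\varphi(x')+t\}$ converge to $\{x_n=\varphi(x')\}$ with surface measures converging as well), so $g(t)\to g(0)$ by dominated convergence of the pulled-back integrands. For general $u\in W^{1,p}(\Omega)$ I would argue by density: pick $u_j\in C^1(\overline\Omega)$ with $u_j\to u$ in $W^{1,p}(\Omega)$. By the continuity of the trace operator $W^{1,p}(\Omega)\to L^p(\partial\Omega)$ we have $\int_{\partial\Omega}|u_j|^p\,dS\to\int_{\partial\Omega}|u|^p\,dS$. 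For the tube integrals one needs a uniform (in $\ve$ and $j$) control: a standard trace-type inequality on the strip gives
$$
\frac{1}{\ve}\int_{\Omega_\ve}|w|^p\,dx\le C\left(\|w\|_{L^p(\Omega)}^p+\|w\|_{L^p(\Omega)}^{p-1}\|\nabla w\|_{L^p(\Omega)}\right)
$$
for all $w\in W^{1,p}(\Omega)$ and all small $\ve$, with $C=C(\Omega,p,n)$ independent of $\ve$; applying this to $w=u-u_j$ shows that $\frac1\ve\int_{\Omega_\ve}|u|^p$ and $\frac1\ve\int_{\Omega_\ve}|u_j|^p$ are uniformly close for $j$ large. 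A three-$\varepsilon$ argument then passes from the $C^1$ case to the general case.

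The main obstacle is establishing that uniform-in-$\ve$ estimate on the boundary strip together with the $g(t)\to g(0)$ convergence; both are routine for smooth domains but require care for merely Lipschitz boundaries, where one must work in a finite atlas of boundary charts, flatten via bi-Lipschitz maps, and keep track of how the Jacobians of the flattening maps behave near the boundary. A cleaner route, which I would present, is to first do the whole argument for $u\in C^1(\overline\Omega)$ using the coarea computation above (so the surface-measure convergence is needed only for $C^1$ integrands, where it is clear), and then extend to $W^{1,p}(\Omega)$ purely by the density/uniform-bound argument of the previous paragraph, so that the delicate geometric step is never invoked for rough functions. Alternatively, since the strips $\Omega_\ve$ shrink to $\partial\Omega$, one may cite the well-known result (see, e.g., the references on Sobolev trace inequalities) that $\frac1\ve\int_{\Omega_\ve}|u|^p\,dx\to\int_{\partial\Omega}|u|^p\,dS$ for Lipschitz $\Omega$, which is exactly the statement to be proved.
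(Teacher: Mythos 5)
Your proof is correct, and its backbone --- the coarea formula for the distance function, reducing the strip average to the Ces\`aro average of $g(t)=\int_{\{d=t\}}|u|^p\,d\mathcal{H}^{n-1}$ --- is exactly the paper's. Where you diverge is in the final convergence step. The paper applies the mean value theorem for integrals to replace the average by a single surface integral $\int_{\omega_{r_0}}|u|^p\,dS$ and then cites an external result (Arrieta--Rodriguez-Bernal--Rossi) for the continuity of the trace of a fixed $W^{1,p}$ function with respect to the hypersurface; note that the mean value theorem already presupposes continuity of $t\mapsto g(t)$, so the cited reference is doing the real work there. You instead prove the convergence self-containedly: first for $u\in C^1(\overline\Omega)$, then for general $u\in W^{1,p}(\Omega)$ by density, using a uniform-in-$\ve$ trace inequality on the strip $\Omega_\ve$ to run a three-epsilon argument. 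This buys a proof that does not outsource the key analytic fact, at the cost of establishing the strip inequality (standard, but requiring the boundary-flattening work you acknowledge). One small point to spell out: passing from $w=u-u_j$ to the difference of the $p$-th powers requires the elementary inequality $\bigl||a|^p-|b|^p\bigr|\le p\left(|a|^{p-1}+|b|^{p-1}\right)|a-b|$ followed by H\"older on the strip, not merely the strip inequality applied to $u-u_j$ alone; with that inserted, your argument is complete.
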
	
	
	In order to deal with the integrals on $\Omega_{\ve}$ we will state 
	the following lemma, which is an immediate consequence of the Coarea 
	formula. See \cite[Section 3.4.4]{EvGa},  for details. 
	
	\begin{lema} \label{lema.coarea}
		Given $g:\R^n \to \R$ an integrable function, and $f:\R^n\to\R$  
		a Lipschitz function such that $\mbox{essinf}\,|Df|>0$. Then it 
		follows that
		\begin{equation} \label{coarea}
		\int_{\{0<f<t\}} g\, dx= \int_0^t\left( \int_{\{f=r\}} 
		\frac{g}{|Df|}dS\right)dr.
		\end{equation}		
	\end{lema}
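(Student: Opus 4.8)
The plan is to obtain \eqref{coarea} as a one-line consequence of the classical coarea formula for Lipschitz maps, exactly as indicated by the reference to \cite[Section 3.4.4]{EvGa}. Recall that formula in the form we need: if $f\colon\R^n\to\R$ is Lipschitz and $h\colon\R^n\to\R$ is $\mathcal{L}^n$-integrable, then $r\mapsto\int_{\{f=r\}} h\,dS$ is measurable and
\[
\int_{\R^n} h(x)\,|Df(x)|\,dx=\int_{-\infty}^{\infty}\left(\int_{\{f=r\}} h\,dS\right)dr ,
\]
where $dS=d\mathcal{H}^{n-1}$ and, for a.e.\ $r$, the level set $\{f=r\}$ carries locally finite $\mathcal{H}^{n-1}$-measure.

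First I would exploit the hypothesis $m:=\mbox{essinf}\,|Df|>0$: it gives $1/|Df|\le 1/m$ a.e., so the function
\[
h:=\frac{g}{|Df|}\,\chi_{\{0<f<t\}}
\]
is measurable and satisfies $|h|\le |g|/m\in L^1(\R^n)$, hence is admissible in the coarea formula. Substituting this $h$, the left-hand side becomes $\int_{\R^n}\frac{g}{|Df|}\chi_{\{0<f<t\}}\,|Df|\,dx=\int_{\{0<f<t\}} g\,dx$, since $|Df|>0$ a.e. For the right-hand side, on each slice $\{f=r\}$ the cutoff $\chi_{\{0<f<t\}}$ equals $1$ when $0<r<t$ and $0$ otherwise, so $\int_{\{f=r\}} h\,dS$ equals $\int_{\{f=r\}}\frac{g}{|Df|}\,dS$ for $r\in(0,t)$ and vanishes for $r\notin(0,t)$; the outer $r$-integral therefore collapses to $\int_0^t\big(\int_{\{f=r\}}\frac{g}{|Df|}\,dS\big)\,dr$, which is precisely \eqref{coarea}.

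I do not expect any genuine obstacle: the only delicate points — the measurability of $r\mapsto\int_{\{f=r\}}\frac{g}{|Df|}\,dS$ and the finiteness of the inner integral for a.e.\ $r$ — are already built into the statement of the coarea theorem (applied to the $L^1$ function $|g|/|Df|$), so nothing beyond the choice of test function needs to be checked. The same argument works verbatim with $\{0<f<t\}$ replaced by any $\{a<f<b\}$, which is the form actually used later when estimating $\int_{\Omega_\ve} g\,dx$ through the distance function to $\partial\Omega$.
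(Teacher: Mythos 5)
Your proof is correct and is exactly the argument the paper intends: the paper gives no written proof, stating only that the lemma ``is an immediate consequence of the Coarea formula'' with a citation to \cite[Section 3.4.4]{EvGa}, and your substitution of $h=\frac{g}{|Df|}\chi_{\{0<f<t\}}$ (made admissible by $\mbox{essinf}\,|Df|>0$) is the standard one-line derivation being invoked.
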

	
	Now we are ready to proceed with the proof of Lemma \ref{int1}.
	
	\begin{proof}[Proof of Lemma \ref{int1}]
		We consider the $(n-1)$-dimensional hyper-surface in $\R^n$ given 
		by $\omega_r=\{x\in\R^n \colon \, d(x,\Omega^c)=r\}$, where 
		$d(x,\Omega)=\inf_{y\in\Omega} |x-y|$. Observe that
		$\Omega_\ve=\{x\in \R^n \colon \, x\in \omega_r \mbox{ for } r\in 
		[0,\ve]\}$ and $\omega_0=\partial \Omega$.
		By applying Lemma \ref{lema.coarea} with $g=|u|^p$ and $f(x)=d(x,\Omega^c)$ we get
		$$
		\int_{\Omega_\ve} |u|^p\, dx = \int_0^\ve \left(\int_{\omega_r} |u|^p \, dS\right)dr
		$$
		since $|Df|=1.$
		The Mean value theorem for integrals asserts that there exists $r_0\in [0,\ve]$ such that
		$$
		\int_0^\ve \left(\int_{\omega_r} |u|^p \, dS\right)dr= \ve  \int_{\omega_{r_0}} |u|^p \, dS.
		$$
		Since $r_0$ tends to $0$ as $\ve\to 0^+$, we get  that $\omega_{r_0}$ tends to  $\partial \Omega$ as $\ve \to 0^+$ and the result follows using that the trace operator for a fixed function in $W^{1,p} (\Omega)$ 
		depends continuously on the hyper-surface (see \cite{ARBR}) and hence
		$$
		\int_{\omega_{r_0}} |u|^p \, dS \to \int_{\partial \Omega} |u|^p \, dS, 
		$$
		as $r_0 \to 0+$.
	\end{proof}
	
	If $\Omega$ is a smooth bounded domain in $\R^n$ then, by Theorem 7.25 in \cite{GT}, for any open ball $B_R\supset\supset \Omega$
	there is a bounded linear extension operator $E$ from 
	$W^{1,p}(\Omega)$ into $W^{1,p}_0(B_R)$ such that 
	$Eu=u$ in $\Omega.$ Our next goal is to prove that
	\begin{equation}\label{eq:limiteimportante}
		\mathcal{K}_{n,p}(1-s)[Eu]_{s,p}\to\|\nabla u\|_{\lp}^p
	\end{equation}
	as $s\to1^-.$  To this end, we need the following result.
	For the proof we refer to \cite[Corollary 2]{BBM}.
	
	\begin{thm}
		\label{teo:bbm1}	
		Let	$\Omega$ be a smooth bounded domain and $p\in(1,\infty).$
		Assume $u\in L^p(\Omega),$ then
		\[
		\lim_{s\to 1^-}\C(1-s)|u|_{ W^{s,p}(\Omega)}^p=|u|_{ W^{1,p}(\Omega)}^p 
		\]
		with
		\[
		|u|_{ W^{1,p}(\Omega)}^p =\begin{cases}
		\|\nabla u\|^p_{\lp} 
		&\text{ if } u\in W^{1,p}(\Omega),\\
		\infty &\text{ otherwise}.
		\end{cases}	
		\]	
	\end{thm}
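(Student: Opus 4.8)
The plan is to split the argument according to whether $u\in\wup$ or $u\in\lp\setminus\wup$, handling the first case by the limit \eqref{lim.grad} that is already available to us and the second by the compactness side of the Bourgain--Brezis--Mironescu theory.

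\emph{Case $u\in\wup$.} By the definition of the Gagliardo seminorm, $|u|_{W^{s,p}(\Omega)}^p=\iint_{\Omega\times\Omega}\frac{|u(x)-u(y)|^p}{|x-y|^{n+sp}}\,dx\,dy$, so in this case the assertion is precisely \eqref{lim.grad}, which gives $\lim_{s\to1^-}\C(1-s)|u|_{W^{s,p}(\Omega)}^p=\int_\Omega|\nabla u|^p\,dx=\|\nabla u\|_{\lp}^p$. Nothing further is needed here.

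\emph{Case $u\in\lp\setminus\wup$.} We must show that $\C(1-s)|u|_{W^{s,p}(\Omega)}^p\to+\infty$ as $s\to1^-$. Arguing by contradiction, suppose there are $s_k\uparrow1$ and $M<\infty$ with $\C(1-s_k)|u|_{W^{s_k,p}(\Omega)}^p\le M$ for all $k$. Rewrite the kernel as
\[
\C(1-s_k)\frac{1}{|x-y|^{n+s_kp}}=\frac{\rho_k(x-y)}{|x-y|^p},\qquad
\rho_k(z):=\C(1-s_k)\,|z|^{\,p(1-s_k)-n}\,\chi_{\{|z|\le D\}},\quad D:=\operatorname{diam}\Omega,
\]
the cutoff being irrelevant for $x,y\in\Omega$. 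A computation in polar coordinates gives $\int_{\R^n}\rho_k=\C\,\omega_{n-1}\,p^{-1}\,D^{\,p(1-s_k)}\to\C\,\omega_{n-1}\,p^{-1}\in(0,\infty)$, with $\omega_{n-1}=|S^{n-1}|$, and $\int_{\{|z|>\delta\}}\rho_k\to0$ for every $\delta>0$; hence the normalized family $\widehat\rho_k:=\rho_k/\|\rho_k\|_{L^1}$ is a sequence of radial mollifiers in the sense of \cite{BBM}, and the standing hypothesis is equivalent to the uniform bound $\iint_{\Omega\times\Omega}\frac{|u(x)-u(y)|^p}{|x-y|^p}\widehat\rho_k(x-y)\,dx\,dy\le M'$. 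By the compactness statement of \cite[Theorem~2]{BBM}, such a bound on a mollifier family forces $u\in\wup$, contradicting the case assumption. Therefore $\C(1-s)|u|_{W^{s,p}(\Omega)}^p\to+\infty=|u|_{W^{1,p}(\Omega)}^p$.

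\emph{Main obstacle.} Case~1 is immediate once \eqref{lim.grad} is granted; the whole weight of the proof is the compactness step of Case~2, namely the implication ``a uniform bound on $\iint_{\Omega\times\Omega}\frac{|u(x)-u(y)|^p}{|x-y|^p}\widehat\rho_k(x-y)\,dx\,dy$ implies $u\in\wup$''. The standard route (BBM) mollifies $u^\eta:=u*\varphi_\eta$ with a fixed smooth kernel $\varphi$, and uses Jensen's inequality together with the concentration of the $\widehat\rho_k$ to bound $\|\nabla u^\eta\|_{L^p(\Omega')}^p$ by $C\,\liminf_k\iint_{\Omega\times\Omega}\frac{|u(x)-u(y)|^p}{|x-y|^p}\widehat\rho_k(x-y)\,dx\,dy$ for every $\Omega'\subset\subset\Omega$, uniformly in $\eta$; since $u^\eta\to u$ in $\lp$, passing to a weak limit yields $\nabla u^\eta\cd\nabla u$ in $L^p(\Omega')$, hence $u\in W^{1,p}_{\mathrm{loc}}(\Omega)$ with a bound uniform in $\Omega'$, and the smoothness of $\partial\Omega$ upgrades this to $u\in\wup$. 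Since the companion limit \eqref{lim.grad} is itself quoted from \cite{BBM}, in the write-up it is cleanest simply to invoke \cite[Corollary~2]{BBM}.
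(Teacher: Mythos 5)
Your proposal is correct, and it is essentially the paper's approach: the paper gives no argument at all for this statement, simply deferring to \cite[Corollary~2]{BBM}, which is exactly the dichotomy you prove. Your unpacking of that citation --- \eqref{lim.grad} for the case $u\in\wup$, and the reduction of the kernel $\C(1-s)|x-y|^{-n-sp}$ to a normalized radial mollifier family (the exponent and normalization computations check out) followed by the BBM compactness theorem for the case $u\in\lp\setminus\wup$ --- is sound, and your closing remark that one may as well just invoke the corollary matches what the authors actually do.
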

	
	We now show \ref{eq:limiteimportante}, 
	which will be key  in the proof of next results. 
	
	\begin{lema}  \label{int2}
		If $u\in W^{1,p}(\Omega),$ then
		$$
		\lim_{s\to 1^-}\C(1-s)[Eu]_{s,p}^p =
		\|\nabla u\|_{\lp}^p. 
		$$
	\end{lema}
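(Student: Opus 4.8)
The statement to prove is Lemma \ref{int2}: for $u\in W^{1,p}(\Omega)$, we have $\lim_{s\to 1^-}\C(1-s)[Eu]_{s,p}^p = \|\nabla u\|_{\lp}^p$. First I would recall that $[Eu]_{s,p}^p = \hh(Eu,Eu)$, the Gagliardo-type seminorm computed over $\R^{2n}\setminus(\Omega^c)^2$, i.e. over all pairs $(x,y)\in\R^n\times\R^n$ except those with both coordinates outside $\overline\Omega$. Since $Eu\in W^{1,p}_0(B_R)$ with $\Omega\subset\subset B_R$, the function $Eu$ belongs to $W^{1,p}(B_R)$ and in particular $Eu\in L^p(B_R)$, so Theorem \ref{teo:bbm1} applies on the smooth bounded domain $B_R$ and gives
\[
\lim_{s\to 1^-}\C(1-s)\,|Eu|_{W^{s,p}(B_R)}^p = \|\nabla(Eu)\|_{L^p(B_R)}^p.
\]
The right-hand side equals $\|\nabla u\|_{\lp}^p + \|\nabla(Eu)\|_{L^p(B_R\setminus\Omega)}^p$, which is \emph{not} what we want: the extension contributes an extra term. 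So the main point is that the seminorm $\hh$, which excludes the region $(\Omega^c)^2$, kills exactly this spurious contribution in the limit.

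**Key steps.** I would split $\hh(Eu,Eu)$ according to the region of integration. Write $D := \R^{2n}\setminus(\Omega^c)^2$. A natural decomposition is $D = (\Omega\times\Omega)\cup(\Omega\times\Omega^c)\cup(\Omega^c\times\Omega)$ (up to null sets), so
\[
\hh(Eu,Eu) = |Eu|_{W^{s,p}(\Omega)}^p + 2\iint_{\Omega\times\Omega^c}\frac{|Eu(x)-Eu(y)|^p}{|x-y|^{n+sp}}\,dx\,dy.
\]
For the first term, apply Theorem \ref{teo:bbm1} on $\Omega$ directly: since $u\in W^{1,p}(\Omega)$, $\lim_{s\to1^-}\C(1-s)|Eu|_{W^{s,p}(\Omega)}^p = \lim_{s\to1^-}\C(1-s)|u|_{W^{s,p}(\Omega)}^p = \|\nabla u\|_{\lp}^p$. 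It then suffices to show that the cross term vanishes:
\[
\lim_{s\to 1^-}\C(1-s)\iint_{\Omega\times\Omega^c}\frac{|Eu(x)-Eu(y)|^p}{|x-y|^{n+sp}}\,dx\,dy = 0.
\]
Since $Eu$ is supported in $B_R$, the inner region reduces to $\Omega\times(B_R\setminus\Omega)$ plus a tail $\Omega\times(B_R^c)$; on the tail $|x-y|$ is bounded below by a fixed positive constant, so that piece is $O((1-s)\int_\Omega|u|^p)\to 0$ trivially. For the bounded piece $\Omega\times(B_R\setminus\Omega)$, I would obtain the bound from the full-space estimate: $\C(1-s)\iint_{\Omega\times(B_R\setminus\Omega)}\frac{|Eu(x)-Eu(y)|^p}{|x-y|^{n+sp}}\,dx\,dy \le \C(1-s)|Eu|_{W^{s,p}(B_R)}^p - \C(1-s)|Eu|_{W^{s,p}(\Omega)}^p - \C(1-s)|Eu|_{W^{s,p}(B_R\setminus\Omega)}^p$ is not quite right because $B_R\setminus\Omega$ need not be smooth; instead I would simply use the elementary bound
\[
\C(1-s)\iint_{\Omega\times(B_R\setminus\Omega)}\frac{|Eu(x)-Eu(y)|^p}{|x-y|^{n+sp}}\,dx\,dy \le \C(1-s)|Eu|_{W^{s,p}(B_R)}^p,
\]
which stays bounded as $s\to1^-$ by Theorem \ref{teo:bbm1} — but boundedness alone is not enough, I need it to vanish. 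The cleaner route: $\C(1-s)|Eu|_{W^{s,p}(B_R)}^p \to \|\nabla u\|_{\lp}^p + \|\nabla(Eu)\|_{L^p(B_R\setminus\Omega)}^p$, while $\C(1-s)|Eu|_{W^{s,p}(\Omega)}^p + \C(1-s)|Eu|_{W^{s,p}(B_R\setminus\Omega)}^p \to \|\nabla u\|_{\lp}^p + \|\nabla(Eu)\|_{L^p(B_R\setminus\Omega)}^p$ provided $B_R\setminus\Omega$ is Lipschitz (which it is, being the difference of a ball and a smooth domain, possibly after a harmless smoothing of $\partial B_R$). Subtracting, the cross-term $\C(1-s)\iint_{\Omega\times(B_R\setminus\Omega)} + \iint_{(B_R\setminus\Omega)\times\Omega}$ — which is $\le \C(1-s)(|Eu|^p_{W^{s,p}(B_R)} - |Eu|^p_{W^{s,p}(\Omega)} - |Eu|^p_{W^{s,p}(B_R\setminus\Omega)})$ — tends to $0$. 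Combining everything yields the claim.

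**Main obstacle.** The delicate point is the geometry of the region $B_R\setminus\Omega$: to apply Theorem \ref{teo:bbm1} to it I need it to be a smooth (or at least Lipschitz) bounded domain, which requires choosing $B_R$ generic (so $\partial B_R$ meets nothing pathological) or replacing the ball by a smooth domain $U\supset\supset\Omega$ — a cosmetic change since the extension operator of \cite{GT} works into $W^{1,p}_0(U)$ for any such $U$. Once that is arranged, the argument is purely a matter of splitting the Gagliardo integral over $\Omega\times\Omega$, $(B_R\setminus\Omega)\times(B_R\setminus\Omega)$, and the mixed region, invoking Theorem \ref{teo:bbm1} on each of the two product-of-same-set pieces, and noting that the mixed region carries exactly the remainder. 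The far tail outside $B_R$ is handled trivially by the $(1-s)$ prefactor. I expect no essential analytic difficulty beyond bookkeeping the decomposition and justifying Lipschitz regularity of the complementary domain.
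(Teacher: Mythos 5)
Your proposal is correct and follows essentially the same route as the paper: decompose $[Eu]_{s,p}^p$ into the $\Omega\times\Omega$ piece (handled by Theorem \ref{teo:bbm1}), the cross term over $\Omega\times(B_R\setminus\overline\Omega)$ (shown to vanish by writing it as $|Eu|^p_{W^{s,p}(B_R)}-|Eu|^p_{W^{s,p}(\Omega)}-|Eu|^p_{W^{s,p}(B_R\setminus\overline\Omega)}$ and applying Theorem \ref{teo:bbm1} to each term, the gradient norms being additive), and the far tail over $\Omega\times B_R^c$ (killed by the $(1-s)$ prefactor since $|x-y|\ge d(\Omega,B_R^c)>0$). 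Your worry about the regularity of $B_R\setminus\overline\Omega$ is moot since $\Omega\subset\subset B_R$ makes its boundary the disjoint union of the two smooth hypersurfaces $\partial B_R$ and $\partial\Omega$, so Theorem \ref{teo:bbm1} applies directly.
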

		
	\begin{proof}
		Observe that 
		\begin{align*}
		[Eu]_{s,p}^p=|u|_{W^{s,p}(\Omega)}^p
		&+2\iint_{\Omega\times(\Omega^c\cap B_R)}
		\dfrac{|Eu(x)-Eu(y)|^p}{|x-y|^{n+sp}}\, dy dx\\
		&+2\iint_{\Omega\times B_R^c}
		\dfrac{|Eu(x)-Eu(y)|^p}{|x-y|^{n+sp}}\, dy dx.
		\end{align*}
		Then, by Theorem \ref{teo:bbm1}, we need to show that
		\begin{align*}
		&(1-s)
		\iint_{\Omega\times(\Omega^c\cap B_R)}
		\dfrac{|Eu(x)-Eu(y)|^p}{|x-y|^{n+sp}}\, dy dx\to0,\\
		&(1-s)\iint_{\Omega\times B_R^c}
		\dfrac{|Eu(x)-Eu(y)|^p}{|x-y|^{n+sp}}\, dy dx
		\to 0,
		\end{align*}
		as $s\to1^{-}.$
		
		By Theorem \ref{teo:bbm1}, what we have  that 
		\begin{align*}
		&\mathcal{K}_{n,p}(1-s)|Eu|_{W^{s,p}(B_R)}^p\to \|\nabla 
		Eu\|_{L^p(B_R)}^p,\\
		&\mathcal{K}_{n,p}(1-s)|Eu|_{W^{s,p}(\Omega)}^p\to \|\nabla Eu\|_{\lp}^p,\\
		&\mathcal{K}_{n,p}(1-s)|Eu|_{W^{s,p}(\Omega\cap
			B_R^c)}^p\to \|\nabla Eu\|_{L^p(\Omega^c\cap B_R)}^p,
		\end{align*}
		as $s\to1^{-}.$ Therefore 
		\begin{align*}
		(1-s)
		\iint_{\Omega\times(\Omega^c\cap B_R)}&
		\dfrac{|Eu(x)-Eu(y)|^p}{|x-y|^{n+sp}}\, dy dx\\
		&=\dfrac{(1-s)}2\left(
		|Eu|_{W^{s,p}(B_R)}^p-|Eu|_{W^{s,p}(\Omega)}^p
		-|Eu|_{W^{s,p}(\Omega\cap
			B_R^c)}^p\right)\\
		&\to 0 \quad\mbox{ as } s\to 1^-.
		\end{align*}
		
		On the other hand
		\[		
		(1-s)\iint_{\Omega\times B_R^c}
		\dfrac{|Eu(x)-Eu(y)|^p}{|x-y|^{n+sp}}\, dy dx
		\le C_n\dfrac{(1-s)}{sp}
		\dfrac{1}{d(\Omega,B_R^c)^{sp}}\to 0
		\]
		as $s\to 1^-.$
	\end{proof}	
	
	From Lemmas \ref{int1} and \ref{int2}, we get
	\begin{cor}  \label{int3} 
		Let	$\Omega$ be a smooth bounded domain and $p\in(1,\infty).$ 
		For a fixed $u\in W^{1,p}(\Omega)\setminus W^{1,p}_0(\Omega),$ 
		it holds
		$$
			\lim_{s\to 1^-}
			\dfrac{\C(1-s)[Eu]_{s,p}^p+\|Eu\|_{\lp}^p}
			{\frac{1}{1-s}\|Eu\|_{L^p(\Omega_{1-s})}^p} =
			\dfrac{\|\nabla u\|_{\lp}^p +\|u\|_{\lp}^p}
			{\|u\|_{L^{p}(\partial\Omega)}^p}. 
		$$
	\end{cor}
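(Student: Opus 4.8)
The plan is to handle the numerator and the denominator of the quotient separately and then pass to the limit via the algebra of limits, the only point requiring care being that the limiting denominator is strictly positive. For the numerator, note first that since $Eu=u$ in $\Omega$ we have $\|Eu\|_{\lp}^p=\|u\|_{\lp}^p$, which does not depend on $s$. Combining this with Lemma \ref{int2}, which gives $\C(1-s)[Eu]_{s,p}^p\to\|\nabla u\|_{\lp}^p$ as $s\to1^-$, we obtain
\[
	\C(1-s)[Eu]_{s,p}^p+\|Eu\|_{\lp}^p\longrightarrow \|\nabla u\|_{\lp}^p+\|u\|_{\lp}^p.
\]

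For the denominator, set $\ve=1-s$, so that $\ve\to0^+$ precisely when $s\to1^-$. Since $\Omega_\ve\subset\Omega$ and $Eu=u$ there, we have $\|Eu\|_{L^p(\Omega_\ve)}^p=\|u\|_{L^p(\Omega_\ve)}^p$, and applying Lemma \ref{int1} to the fixed function $u\in W^{1,p}(\Omega)$ yields
\[
	\frac{1}{1-s}\|Eu\|_{L^p(\Omega_{1-s})}^p=\frac{1}{\ve}\int_{\Omega_\ve}|u|^p\,dx\longrightarrow \int_{\partial\Omega}|u|^p\,dS=\|u\|_{L^p(\partial\Omega)}^p.
\]

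The remaining and only delicate point is that this last limit does not vanish, and this is where the hypothesis $u\in W^{1,p}(\Omega)\setminus W^{1,p}_0(\Omega)$ enters: for a bounded domain with Lipschitz (in particular smooth) boundary, $W^{1,p}_0(\Omega)$ is exactly the kernel of the trace operator $W^{1,p}(\Omega)\to L^p(\partial\Omega)$, so $u\notin W^{1,p}_0(\Omega)$ forces the trace of $u$ on $\partial\Omega$ to be not identically zero, i.e. $\|u\|_{L^p(\partial\Omega)}>0$. Hence the denominator stays bounded away from $0$ for $s$ close to $1$, the quotient is well defined for such $s$, and dividing the two limits established above gives the claimed identity. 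There is no essential obstacle here beyond recalling this characterization of $W^{1,p}_0(\Omega)$ to rule out a zero denominator; all of the analytic content has already been carried out in Lemmas \ref{int1} and \ref{int2}.
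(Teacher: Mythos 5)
Your proposal is correct and follows exactly the route the paper intends: the corollary is stated as an immediate consequence of Lemmas \ref{int1} and \ref{int2}, obtained by passing to the limit in numerator and denominator separately. Your additional observation that $u\notin W^{1,p}_0(\Omega)$ guarantees a nonzero trace, hence a nonzero limiting denominator, is the right justification for the hypothesis and is a detail the paper leaves implicit.
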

	
	From this result the following corollary is straightforward.
	
	\begin{cor}  \label{limsup}
		Let	$\Omega$ be a smooth bounded domain and $p\in(1,\infty).$ Then
			\[
				\limsup_{s\to1^-}\lambda_1(s,p)\le \lambda_1(p).
			\]	
	\end{cor}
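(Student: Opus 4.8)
The plan is to feed the extension of an arbitrary admissible Steklov function into the variational characterization \eqref{lam1} (with $\ve = 1-s$) of $\lambda_1(s,p)$, and then pass to the limit using Corollary \ref{int3}. Since $\lambda_1(p)$ is an infimum, it suffices to bound $\limsup_{s\to1^-}\lambda_1(s,p)$ by the Steklov quotient of each fixed competitor.

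First I would fix an arbitrary $u\in\wup\setminus W^{1,p}_0(\Omega)$, so that $\|u\|_{L^p(\partial\Omega)}\neq 0$, and consider its extension $Eu\in W^{1,p}_0(B_R)$. Because $Eu$ has compact support and lies in $W^{1,p}(\R^n)$, it belongs to $W^{s,p}(\R^n)$ for every $s\in(0,1)$; in particular $[Eu]_{s,p}<\infty$, so $Eu\in\W(\Omega)$. Moreover, Lemma \ref{int1} applied to $u$ gives
\[
\frac{1}{1-s}\|Eu\|_{L^p(\Omega_{1-s})}^p\longrightarrow \|u\|_{L^p(\partial\Omega)}^p>0
\qquad\text{as } s\to1^-,
\]
so for $s$ sufficiently close to $1$ the denominator in \eqref{lam1} does not vanish and $Eu$ is an admissible competitor. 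Hence, for such $s$,
\[
\lambda_1(s,p)\le
\frac{\C(1-s)[Eu]_{s,p}^p+\|Eu\|_{\lp}^p}{\frac{1}{1-s}\|Eu\|_{L^p(\Omega_{1-s})}^p}.
\]

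Taking $\limsup_{s\to1^-}$ on both sides and evaluating the limit of the right-hand side with Corollary \ref{int3}, I obtain
\[
\limsup_{s\to1^-}\lambda_1(s,p)\le
\frac{\|\nabla u\|_{\lp}^p+\|u\|_{\lp}^p}{\|u\|_{L^p(\partial\Omega)}^p}.
\]
Since $u\in\wup\setminus W^{1,p}_0(\Omega)$ was arbitrary, taking the infimum over all such $u$ and recalling \eqref{lam2} yields $\limsup_{s\to1^-}\lambda_1(s,p)\le\lambda_1(p)$.

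There is no real obstacle here: the argument is a direct combination of the infimum definition of $\lambda_1(s,p)$ with the convergence established in Corollary \ref{int3}. The only points needing a word of care are the two admissibility checks for the competitor $Eu$ — finiteness of the nonlocal seminorm $[Eu]_{s,p}$ (which follows from the compact support of $Eu$ and the inclusion $W^{1,p}\hookrightarrow W^{s,p}$ on bounded sets, and is already implicit in the proof of Lemma \ref{int2}) and nonvanishing of the denominator for $s$ near $1$ (which follows from Lemma \ref{int1} together with the choice $u\notin W^{1,p}_0(\Omega)$).
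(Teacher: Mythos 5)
Your proposal is correct and follows exactly the route the paper intends: Corollary \ref{int3} applied to the competitor $Eu$ in the variational characterization \eqref{lam1}, followed by taking the infimum over $u\in W^{1,p}(\Omega)\setminus W^{1,p}_0(\Omega)$. The admissibility checks you flag (finiteness of $[Eu]_{s,p}$ and nonvanishing of the denominator for $s$ near $1$) are exactly the details the paper leaves implicit when it calls the corollary ``straightforward.''
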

	
	\medskip
	
	With this result in mind, to prove the last part of Theorem \ref{teo.1}, we need to show that
	\begin{equation}\label{eq:liminf}
		\lambda_1(p)\le	\liminf_{s\to1^-}\lambda_1(s,p).
	\end{equation}
	Before proving this, we need to state some auxiliary results.
	
	\medskip
	
	The next theorem is established in \cite[Corollary 7]{BBM}.
		
	\begin{thm}\label{BBM}
		Let	$\Omega$ be a smooth bounded domain, $p\in(1,\infty),$ 
		and $u_s \in W^{s,p}(\Omega).$ Assume that
		\[
			\|u_s\|_{L^p(\Omega)}\le C \quad \mbox{ and }
			\quad (1-s)|u_s|_{W^{s,p}(\Omega)}<C \quad
			\forall s>0.
		\]
		Then, up to a subsequence, $\{u_s\}$ converges in $\lp$
		(and, in fact, in $W^{s_0,p}(\Omega)$ for all $s_0\in(0,1)$)
		to some $u\in W^{1,p}(\Omega).$
	\end{thm}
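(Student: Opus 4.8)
The plan is to deduce the statement from the compactness theory of Bourgain--Brezis--Mironescu \cite{BBM} after a change of variables, splitting the task into (A) the relative compactness of $\{u_s\}$ in $L^p(\Omega)$ and (B) the membership in $W^{1,p}(\Omega)$ of any $L^p$-limit. The first move is to introduce the radial kernels $\rho_s(z):=p(1-s)\,|z|^{(1-s)p-n}\chi_{\{|z|<1\}}$. A computation in polar coordinates gives $\int_{\R^n}\rho_s\,dz=|S^{n-1}|$, the $\rho_s$ are non-increasing in $|z|$ and concentrate at the origin as $s\to1^-$ (that is, $\int_{\{|z|>\delta\}}\rho_s\,dz\to0$ for every fixed $\delta>0$), and, since $\rho_s(z)/|z|^p=p(1-s)\,|z|^{-n-sp}\chi_{\{|z|<1\}}$,
\[
\iint_{\Omega\times\Omega}\frac{|u_s(x)-u_s(y)|^p}{|x-y|^p}\,\rho_s(x-y)\,dx\,dy\ \le\ p(1-s)\,|u_s|_{W^{s,p}(\Omega)}^p\ \le\ C .
\]
Together with $\|u_s\|_{L^p(\Omega)}\le C$, this places us precisely in the hypotheses of the compactness theorem of \cite{BBM} for concentrating kernels.

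For step (A), after extending each $u_s$ to $\R^n$ with controlled norm (harmless for the Gagliardo energy on a smooth domain, or one localizes to cubes $Q\Subset\Omega$ and patches), I would use that, by Jensen's inequality and $\tfrac1{|S^{n-1}|}\int\rho_s=1$,
\[
\Big\|\,u_s-\tfrac1{|S^{n-1}|}\rho_s*u_s\,\Big\|_{L^p(\R^n)}^{\,p}\ \le\ \frac1{|S^{n-1}|}\int_{\R^n}\rho_s(h)\,\|u_s(\cdot+h)-u_s\|_{L^p(\R^n)}^{\,p}\,dh ;
\]
splitting this integral at $|h|=\delta$ and estimating the inner part by $|h|^p$ times $\|u_s(\cdot+h)-u_s\|_{L^p}^p/|h|^p$ (controlled by the previous display) and the outer part by $2^p\|u_s\|_{L^p}^p$ (controlled, as $s\to1^-$, by the concentration of $\rho_s$), one gets $\|u_s-\tfrac1{|S^{n-1}|}\rho_s*u_s\|_{L^p}\to0$. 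Feeding this into the compactness criterion of \cite[Corollary~7]{BBM}---which compares $u_s$ with a further, fixed, mollification and exploits that $\varphi_\eta*u_s$ is bounded in $W^{1,p}$ while $\|\tfrac1{|S^{n-1}|}\rho_s*w-w\|_{L^p}\le\|\nabla w\|_{L^p}(\tfrac{1-s}{2-s})^{1/p}$ for $w\in W^{1,p}$ (since $\int\rho_s(h)|h|^p\,dh=\tfrac{1-s}{2-s}|S^{n-1}|$)---one concludes that $\{u_s\}$ is totally bounded in $L^p(\Omega)$.

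For step (B), working with the extensions of the previous step, I would slice in one dimension. With $x-y=t\omega$, $t>0$, $\omega\in S^{n-1}$, Fubini's theorem gives
\[
\iint_{\R^n\times\R^n}\frac{|v(x)-v(y)|^p}{|x-y|^p}\,\rho_s(x-y)\,dx\,dy
\ =\ p(1-s)\int_{S^{n-1}}\int_{\omega^\perp}\Big(\int_0^1 t^{-1-sp}\,\|v_{y',\omega}(\cdot+t)-v_{y',\omega}\|_{L^p(\R)}^{\,p}\,dt\Big)\,dy'\,d\omega ,
\]
where $v_{y',\omega}(\tau):=v(y'+\tau\omega)$; up to a harmless localization the inner integral is the one-dimensional seminorm $|v_{y',\omega}|_{W^{s,p}(\R)}^p$. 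If $u_{s_j}\to u$ in $L^p(\Omega)$, then by Fubini $(u_{s_j})_{y',\omega}\to u_{y',\omega}$ in $L^p_{\mathrm{loc}}(\R)$ for a.e.\ $(y',\omega)$, so the one-dimensional case of Theorem~\ref{teo:bbm1} (elementary on the line) together with the lower semicontinuity of the Sobolev seminorm gives $\liminf_j(1-s_j)|(u_{s_j})_{y',\omega}|_{W^{s_j,p}(\R)}^p\ge c_p\|(u_{y',\omega})'\|_{L^p(\R)}^p$. Integrating in $(y',\omega)$ with Fatou's lemma and using the identity $\int_{S^{n-1}}|\xi\cdot\omega|^p\,d\omega=c_{n,p}|\xi|^p$, one obtains $c_{n,p}'\|\nabla u\|_{L^p(\Omega)}^p\le\liminf_j\iint_{\Omega\times\Omega}\frac{|u_{s_j}(x)-u_{s_j}(y)|^p}{|x-y|^p}\rho_{s_j}(x-y)\,dx\,dy\le pC$, hence $u\in W^{1,p}(\Omega)$. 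The convergence in $W^{s_0,p}(\Omega)$ for $s_0\in(0,1)$ then follows from the $L^p$-convergence and the a priori bounds by an interpolation argument; I would refer again to \cite[Corollary~7]{BBM} for this.

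The step I expect to be the real obstacle is the compactness in (A): the naive inequality $\|u_s(\cdot+h)-u_s\|_{L^p}^p\lesssim|h|^{sp}|u_s|_{W^{s,p}(\Omega)}^p$ is of no use here, since $|u_s|_{W^{s,p}(\Omega)}^p$ may blow up like $(1-s)^{-1}$ as $s\to1^-$. The key point---and the heart of the argument of \cite{BBM}---is that the factor $(1-s)$ must never be detached from the Gagliardo energy; recasting the hypothesis through the concentrating kernel $\rho_s$ keeps this factor in place and is exactly what makes the Riesz--Kolmogorov-type compactness argument go through.
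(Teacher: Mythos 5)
The paper does not actually prove this theorem---it is quoted directly from \cite[Corollary 7]{BBM}---and your argument is a correct reconstruction of the proof of that cited result (reformulation of the hypothesis through the concentrating kernels $\rho_s$, a Riesz--Kolmogorov total-boundedness argument for the $L^p$-compactness, and slicing plus lower semicontinuity to place the limit in $W^{1,p}$), which still invokes \cite[Corollary 7]{BBM} at the decisive compactness step, so it is in substance the same route as the paper's with the verification of the hypotheses of the cited result spelled out. The only point to flag is that the hypothesis as printed, $(1-s)|u_s|_{W^{s,p}(\Omega)}<C$, must be read as a bound on $(1-s)|u_s|_{W^{s,p}(\Omega)}^p$; this is what your estimates use, what \cite{BBM} requires, and what is actually available when the theorem is applied in Corollary \ref{liminf}.
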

		
	The proof of the following proposition can be found in 
		\cite[Proposition 3.10]{BPS}.
	
	\begin{prop}\label{prop:BPS}
		Let	$\Omega$ be a smooth bounded domain and $p\in(1,\infty).$
		Given $\{s_k\}\subset(0,1)$ an increasing sequence converging to
		1 and $\{u_k\}_{k\in\N}\subset\lp$ converging to $u$ in 
		$\lp,$ we have that
		\[
			\|\nabla u\|_{\lp}^p\le \lim_{k\to \infty}\mathcal{K}_{n,p}
			(1-s_k)|u_{s_k}|_{W^{s_k,p}(\Omega)}^p
		\]
	\end{prop}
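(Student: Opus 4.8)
The plan is to combine the compactness statement of Theorem~\ref{BBM} with the classical Bourgain--Brezis--Mironescu computation of the limit of $(1-s)[\,\cdot\,]_{W^{s,p}}^p$. The feature that makes the result nontrivial is that the exponent $s_k$ tends to $1$ \emph{simultaneously} with the functions $u_{s_k}$, so there is no direct lower semicontinuity estimate on $\Omega\times\Omega$: the mass of the Gagliardo kernel concentrates on the diagonal, and a pointwise limit inside the kernel gives only $0$. One is therefore forced to probe the seminorm at a fixed spatial scale $\rho$ and send $\rho\to0$ only at the very end.

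\smallskip

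First I would reduce to the interesting case: if $\liminf_k\C(1-s_k)|u_{s_k}|_{W^{s_k,p}(\Omega)}^p=+\infty$ there is nothing to prove, so after passing to a subsequence I may assume $\C(1-s_k)|u_{s_k}|_{W^{s_k,p}(\Omega)}^p\to\ell<+\infty$ and it suffices to show $\|\nabla u\|_{\lp}^p\le\ell$. Since $u_{s_k}\to u$ in $\lp$, the norms $\|u_{s_k}\|_{\lp}$ are bounded, and $(1-s_k)|u_{s_k}|_{W^{s_k,p}(\Omega)}^p\to\ell/\C$ is bounded too, so Theorem~\ref{BBM} applies and the function it produces must be $u$ by uniqueness of the $\lp$-limit; in particular $u\in W^{1,p}(\Omega)$.

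\smallskip

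The core of the argument is an estimate on interior subdomains. Fix $\Omega''\subset\subset\Omega$, put $\delta:=\tfrac13\,\mathrm{dist}(\Omega'',\partial\Omega)$, and for $\omega\in S^{n-1}$, $r>0$ set $D^\omega_r v(x):=(v(x+r\omega)-v(x))/r$ and $\Phi_k(r):=\int_{S^{n-1}}\|D^\omega_r u_{s_k}\|_{L^p(\Omega'')}^p\,d\omega$. Writing the inner integral in polar coordinates, for each $\rho\in(0,\delta]$,
\[
|u_{s_k}|_{W^{s_k,p}(\Omega)}^p\ \ge\ \int_{\Omega''}\int_{B_\rho(x)}\frac{|u_{s_k}(x)-u_{s_k}(y)|^p}{|x-y|^{n+s_kp}}\,dy\,dx\ =\ \int_0^{\rho} r^{\,p(1-s_k)-1}\,\Phi_k(r)\,dr .
\]
I would then use that $r\mapsto\|D^\omega_r v\|_{L^p(\Omega'')}$ is non-increasing (telescope $v(x+r\omega)-v(x)$ into equal steps and use convexity of $t\mapsto|t|^p$, at the price of an inessential enlargement of the integration domain), which gives $\Phi_k(r)\ge\Phi_k(\rho)$ for $r\le\rho$ and hence
\[
\C(1-s_k)|u_{s_k}|_{W^{s_k,p}(\Omega)}^p\ \ge\ \C(1-s_k)\,\Phi_k(\rho)\int_0^{\rho} r^{\,p(1-s_k)-1}\,dr\ =\ \frac{\C}{p}\,\rho^{\,p(1-s_k)}\,\Phi_k(\rho).
\]
Passing to the limit $k\to\infty$ (so $\rho^{p(1-s_k)}\to1$) yields $\C\,\limsup_k\Phi_k(\rho)\le p\,\ell$ for every $\rho\in(0,\delta]$; since $u_{s_k}\to u$ in $\lp$ forces $D^\omega_\rho u_{s_k}\to D^\omega_\rho u$ in $L^p(\Omega'')$ for fixed $\rho,\omega$, Fatou's lemma on $S^{n-1}$ then gives $\C\int_{S^{n-1}}\|D^\omega_\rho u\|_{L^p(\Omega'')}^p\,d\omega\le p\,\ell$, and letting $\rho\to0^+$ with $u\in W^{1,p}$ and Fatou once more,
\[
\C\int_{S^{n-1}}\|\partial_\omega u\|_{L^p(\Omega'')}^p\,d\omega\ \le\ p\,\ell .
\]
To finish I would use that $\int_{S^{n-1}}|\xi\cdot\omega|^p\,d\omega$ is independent of the unit vector $\xi$ and equals $p/\C$ --- precisely the identity pinning down the constant $\C$ in \eqref{lim.grad} --- so the left-hand side is $p\,\|\nabla u\|_{L^p(\Omega'')}^p$, whence $\|\nabla u\|_{L^p(\Omega'')}^p\le\ell$, and then exhaust $\Omega$ by such subdomains and apply monotone convergence to conclude $\|\nabla u\|_{\lp}^p\le\ell$.

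\smallskip

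The step I expect to be the main obstacle is the one just described: because the limit cannot be taken pointwise in the Gagliardo kernel, the seminorm must be evaluated at a fixed scale $\rho$ before sending $\rho\to0$, and this is where one needs both the (quasi-)monotonicity of $r\mapsto\|D^\omega_r v\|_{L^p}$ and the concentration of the measure $r^{p(1-s_k)-1}\,dr$ near the origin as $s_k\to1$. The two technical points that require care are the bookkeeping of the small domain translations hidden in that monotonicity and the verification that $\int_{S^{n-1}}|\xi\cdot\omega|^p\,d\omega=p/\C$, i.e. that the constant $\C$ of \cite{BBM} is exactly the one making $(1-s)[\,\cdot\,]_{s,p}^p\to\|\nabla\cdot\|_{\lp}^p$ also in this localized form.
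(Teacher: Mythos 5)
First, note that the paper does not actually prove this proposition: it is quoted verbatim from \cite[Proposition 3.10]{BPS}, where the $\Gamma$--$\liminf$ inequality is obtained by mollification (Jensen's inequality gives $|u_k\ast\eta_\delta|_{W^{s_k,p}(A)}\le|u_k|_{W^{s_k,p}(\Omega)}$ on $A\subset\subset\Omega$, and one then passes to the limit for the smooth functions $u_k\ast\eta_\delta$, which converge in $C^1(\overline A)$). Your route --- polar coordinates plus difference quotients at a fixed scale $\rho$, sent to $0$ only at the end --- is a legitimate alternative, and your reduction steps, the use of Theorem \ref{BBM}, the two applications of Fatou, and the identity $\int_{S^{n-1}}|\xi\cdot\omega|^p\,d\omega=|\xi|^p\,p/\C$ (which indeed matches the explicit formula for $\C$) are all correct.

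There is, however, one genuine gap: the claim that $r\mapsto\|D^\omega_r v\|_{L^p(\Omega'')}$ is non-increasing is false, and it is exactly what you use to pull $\Phi_k(\rho)$ out of the integral. The telescoping/convexity argument only yields, for an \emph{integer} $m$,
\[
\|D^\omega_{mh}v\|_{L^p(A)}^p\le\frac1m\sum_{j=0}^{m-1}\|D^\omega_{h}v(\cdot+jh\omega)\|_{L^p(A)}^p\le\|D^\omega_{h}v\|_{L^p(A_{mh})}^p,
\]
i.e.\ it compares scale $r$ with scales $r/m$ only; it says nothing about general $h<r$. And monotonicity genuinely fails: for $v(x)=\sin(Nx_1)$ and $\omega=e_1$ one has $\|D^{e_1}_r v\|_{L^p}=0$ at $r=2\pi/N$ but $\|D^{e_1}_r v\|_{L^p}>0$ at $r=3\pi/N$. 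Hence the inequality $\int_0^\rho r^{p(1-s_k)-1}\Phi_k(r)\,dr\ge\Phi_k(\rho)\int_0^\rho r^{p(1-s_k)-1}\,dr$ is unjustified as written. The argument can be repaired: decomposing $(0,\rho]=\bigcup_{j\ge0}(\rho 2^{-(j+1)},\rho 2^{-j}]$ and applying the integer-multiple inequality with $m=2^j$ gives $\Phi_k^{A_{2\rho}}(r)\ge\inf_{t\in[\rho/2,\rho]}\Phi_k^{A}(t)$ for every $r\in(0,\rho]$, so the same computation bounds $\limsup_k\inf_{t\in[\rho/2,\rho]}\Phi_k(t)$ by $p\ell/\C$; since $\Phi_k\to\Phi$ uniformly on $[\rho/2,\rho]$ (the operators $D^\omega_t$ have norm $\le 2/t\le 4/\rho$ there) and $\Phi(t)\to\frac{p}{\C}\|\nabla u\|^p_{L^p(\Omega'')}$ as $t\to0^+$ because $u\in W^{1,p}(\Omega)$, the conclusion survives. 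Either insert this dyadic correction or switch to the mollification argument of \cite{BPS}; as it stands the proof rests on a false lemma.
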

	
	\begin{lema}  \label{lema.ricardo}
		Let	$\Omega$ be a smooth bounded domain, $p\in(1,\infty)$
		and $\{u_s\}_{s\in(0,1)}$ be such that $u_s\to u$ strongly in 
		$W^{t,p}(\Omega)$ for some $t\in(\nicefrac{1}{p},1).$ Then
		\[
			\dfrac{1}{1-s}\int_{\Omega_{1-s}} |u_s|^p dx\to 
			\int_{\partial\Omega} |u|^p dS
		\]
		as $s\to1^-.$
	\end{lema}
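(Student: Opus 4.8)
The plan is to write
\[
\frac{1}{1-s}\int_{\Omega_{1-s}}|u_s|^p\,dx-\int_{\partial\Omega}|u|^p\,dS = I_s+II_s ,
\]
with
\[
I_s=\frac{1}{1-s}\int_{\Omega_{1-s}}\bigl(|u_s|^p-|u|^p\bigr)\,dx ,\qquad II_s=\frac{1}{1-s}\int_{\Omega_{1-s}}|u|^p\,dx-\int_{\partial\Omega}|u|^p\,dS ,
\]
and to prove that $I_s\to0$ and $II_s\to0$ as $s\to1^-$. The single ingredient driving both estimates is a trace inequality on thin boundary strips that is uniform in the width: there are $C>0$ and $\ve_0>0$, depending only on $\Omega$, $p$ and $t$, such that
\[
\frac1\ve\int_{\Omega_\ve}|v|^p\,dx\le C\,\|v\|_{W^{t,p}(\Omega)}^p\qquad\text{for all }v\in W^{t,p}(\Omega),\ 0<\ve<\ve_0 ;
\]
I will call this estimate $(\ast)$. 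To prove $(\ast)$ I would combine the Coarea formula (Lemma~\ref{lema.coarea} applied with $f(x)=d(x,\Omega^c)$, so $|Df|=1$, and $g=|v|^p$) with the trace theorem on each level surface $\omega_r=\{x\colon d(x,\Omega^c)=r\}$: since $\Omega$ is smooth, for $\ve_0$ below the reach of $\partial\Omega$ the $\omega_r$ with $0<r<\ve_0$ form a uniformly smooth family of hypersurfaces deforming $\partial\Omega$, and because $t>1/p$ the restriction $W^{t,p}(\Omega)\to L^p(\omega_r)$ is bounded with a constant that can be taken uniform in $r\in(0,\ve_0)$ (a fixed tubular–neighborhood chart and partition of unity handles all the $\omega_r$ at once). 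Then
\[
\frac1\ve\int_{\Omega_\ve}|v|^p\,dx=\frac1\ve\int_0^\ve\Bigl(\int_{\omega_r}|v|^p\,dS\Bigr)\,dr\le\sup_{0<r<\ve_0}\int_{\omega_r}|v|^p\,dS\le C\,\|v\|_{W^{t,p}(\Omega)}^p .
\]

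Granting $(\ast)$, for $I_s$ I would use the elementary inequality $\bigl||a|^p-|b|^p\bigr|\le p\bigl(|a|^{p-1}+|b|^{p-1}\bigr)|a-b|$ followed by H\"older's inequality with exponents $p$ and $p/(p-1)$:
\[
|I_s|\le\frac{p}{1-s}\,\|u_s-u\|_{L^p(\Omega_{1-s})}\Bigl(\|u_s\|_{L^p(\Omega_{1-s})}^{p-1}+\|u\|_{L^p(\Omega_{1-s})}^{p-1}\Bigr).
\]
Splitting $(1-s)^{-1}=(1-s)^{-1/p}(1-s)^{-(p-1)/p}$ and applying $(\ast)$ to each of the three $L^p(\Omega_{1-s})$ norms turns this into
\[
|I_s|\le C\,\|u_s-u\|_{W^{t,p}(\Omega)}\Bigl(\|u_s\|_{W^{t,p}(\Omega)}^{p-1}+\|u\|_{W^{t,p}(\Omega)}^{p-1}\Bigr)\longrightarrow 0\qquad(s\to1^-),
\]
since $u_s\to u$ strongly in $W^{t,p}(\Omega)$ forces $\|u_s-u\|_{W^{t,p}(\Omega)}\to0$ while $\|u_s\|_{W^{t,p}(\Omega)}$ stays bounded.

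For $II_s$ I would argue by density. For $\phi\in C^\infty(\overline\Omega)$ the Coarea formula together with the continuity of $r\mapsto\int_{\omega_r}|\phi|^p\,dS$ gives $\tfrac1\ve\int_{\Omega_\ve}|\phi|^p\,dx\to\int_{\partial\Omega}|\phi|^p\,dS$ (this is the computation already carried out in the proof of Lemma~\ref{int1}). For a general $u\in W^{t,p}(\Omega)$, whose trace on $\partial\Omega$ belongs to $L^p(\partial\Omega)$ because $t>1/p$, pick $\phi$ with $\|u-\phi\|_{W^{t,p}(\Omega)}$ small and split $II_s$ into: the same quantity for $\phi$, which tends to $0$; a strip remainder $\bigl|\tfrac1\ve\int_{\Omega_\ve}(|u|^p-|\phi|^p)\,dx\bigr|$, bounded by $C\|u-\phi\|_{W^{t,p}(\Omega)}^p$ \emph{uniformly in} $\ve$ by the argument used for $I_s$ together with $(\ast)$; and a boundary remainder $\bigl|\int_{\partial\Omega}(|\phi|^p-|u|^p)\,dS\bigr|$, bounded by $C\|u-\phi\|_{W^{t,p}(\Omega)}^p$ by continuity of the trace $W^{t,p}(\Omega)\to L^p(\partial\Omega)$. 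Hence $\limsup_{s\to1^-}|II_s|$ can be made as small as we wish, so $II_s\to0$. (In the applications of this lemma the limit $u$ even lies in $W^{1,p}(\Omega)$ by Theorem~\ref{BBM}, in which case $II_s\to0$ is literally Lemma~\ref{int1}.) Combining the two steps proves the lemma.

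The step I expect to be the genuine difficulty is $(\ast)$, i.e. obtaining a trace constant on the surfaces $\omega_r$ that does not blow up as $r\to0$; everything else is routine H\"older bookkeeping and density. The uniformity is believable because, $\Omega$ being smooth, the surfaces $\omega_r$ are obtained from $\partial\Omega$ by a smooth normal deformation of size $r<\ve_0$, so a single partition of unity flattening $\partial\Omega$ treats all of them simultaneously, the dependence on $r$ entering only through a smooth, non-degenerate change of variables.
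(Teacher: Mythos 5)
Your proof is correct and rests on exactly the same key ingredient as the paper's: the trace constant for $W^{t,p}(\Omega)\hookrightarrow L^p(\omega_r)$ is uniform in $r\in(0,\ve_0)$ because $t>1/p$ and $\partial\Omega$ is smooth, combined with the coarea formula over the strip. The paper states this and concludes immediately; you merely write out the triangle-inequality and density bookkeeping that the published proof leaves implicit, so there is nothing substantively different to compare.
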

	\begin{proof}
		We start observing that, since $\partial\Omega\in C^2$ and 
		$t>\nicefrac1p,$ the 
		trace constant in the embedding $W^{t,p}(\Omega)\hookrightarrow 
		L^p(\partial \Omega_\varepsilon)$ for all $\varepsilon\in
		(0,\varepsilon_0)$ is bounded uniformly (independently of $\varepsilon).$ Then, 
		there is a constant $C$ independent on
		$s$ such that
		\[
			\|u_s-u\|_{L^p(\partial\Omega_{\varepsilon})}
			\le C	
			\|u_s-u\|_{W^{t,p}(\Omega)}.
		\]
		Therefore
		\[
			\dfrac{1}{1-s}\int_{\Omega_{1-s}}|u_s(x)|^p dx
			=\dfrac{1}{1-s}\int_0^{1-s}
			\left(\int_{\partial\Omega_{r}}
			|u_{s}|^p dS \right)dr\to\int_{\partial\Omega} |u|^p dS
		\]
		as $s\to1^{-}.$
	\end{proof}
%%%%%%%%%%%%%%%%%%%%%%%%%%%%%%%%%%%%%%%%%%%%%%%%%%%%%%%%%%%%%%%%%%%%%%%%%%%%%%

	Now we are ready to prove \eqref{eq:liminf}.
	
	\begin{cor}  \label{liminf}
		Let	$\Omega$ be a smooth bounded domain and $p\in(1,\infty).$ Then
		\[
			\lambda_1(p)\le\liminf_{s\to1^-}\lambda_1(s,p).
		\]	
	\end{cor}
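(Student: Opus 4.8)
The plan is to establish \eqref{eq:liminf} by a standard lower-semicontinuity argument along minimizing sequences. First I would fix an increasing sequence $s_k \to 1^-$ realizing $\liminf_{s\to 1^-}\lambda_1(s,p)$, so that $\lambda_1(s_k,p)\to L:=\liminf_{s\to1^-}\lambda_1(s,p)$; by Corollary \ref{limsup} we already know $L\le \lambda_1(p)<\infty$. For each $k$ let $u_k\in\W(\Omega)$ be an eigenfunction associated with $\lambda_1(s_k,p)$, normalized by $\frac{1}{1-s_k}\|u_k\|_{L^p(\Omega_{1-s_k})}^p=1$, so that
\[
	\mathcal{K}_{n,p}(1-s_k)[u_k]_{s_k,p}^p+\|u_k\|_{L^p(\Omega)}^p=\lambda_1(s_k,p).
\]
Since the right-hand side is bounded, we get $\|u_k\|_{L^p(\Omega)}\le C$ and $(1-s_k)\,|u_k|_{W^{s_k,p}(\Omega)}^p\le C$ (using that $[u_k]_{s_k,p}^p\ge |u_k|_{W^{s_k,p}(\Omega)}^p$). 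Theorem \ref{BBM} then gives, up to a subsequence, $u_k\to u$ in $W^{t,p}(\Omega)$ for every $t\in(0,1)$ with $u\in W^{1,p}(\Omega)$.

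Next I would pass to the limit in the three pieces. For the numerator: by Proposition \ref{prop:BPS} applied to the sequence $\{u_k\}$,
\[
	\|\nabla u\|_{L^p(\Omega)}^p\le \liminf_{k\to\infty}\mathcal{K}_{n,p}(1-s_k)\,|u_k|_{W^{s_k,p}(\Omega)}^p\le \liminf_{k\to\infty}\mathcal{K}_{n,p}(1-s_k)\,[u_k]_{s_k,p}^p,
\]
and $\|u_k\|_{L^p(\Omega)}^p\to\|u\|_{L^p(\Omega)}^p$ by strong $L^p$ convergence. For the denominator: since $u_k\to u$ strongly in $W^{t,p}(\Omega)$ for some $t\in(\nicefrac1p,1)$, Lemma \ref{lema.ricardo} applies (with $\varepsilon=1-s_k$), so that
\[
	\frac{1}{1-s_k}\int_{\Omega_{1-s_k}}|u_k|^p\,dx\to \int_{\partial\Omega}|u|^p\,dS.
\]
In particular the normalization forces $\int_{\partial\Omega}|u|^p\,dS=1$, so $u\notin W^{1,p}_0(\Omega)$ and $u$ is admissible in \eqref{lam2}.

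Combining these, and using that the numerator in \eqref{lam1} splits additively, I would write
\[
	\lambda_1(p)\le \frac{\|\nabla u\|_{L^p(\Omega)}^p+\|u\|_{L^p(\Omega)}^p}{\int_{\partial\Omega}|u|^p\,dS}=\|\nabla u\|_{L^p(\Omega)}^p+\|u\|_{L^p(\Omega)}^p\le \liminf_{k\to\infty}\Big(\mathcal{K}_{n,p}(1-s_k)[u_k]_{s_k,p}^p+\|u_k\|_{L^p(\Omega)}^p\Big)=L,
\]
which is exactly \eqref{eq:liminf}. Finally, combining Corollary \ref{limsup} with \eqref{eq:liminf} yields $\lim_{s\to1^-}\lambda_1(s,p)=\lambda_1(p)$, completing the proof.

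The main obstacle is making sure the compactness and the denominator limit are genuinely compatible: the normalization is on the $(1-s_k)$-thin strip $\Omega_{1-s_k}$, a set that shrinks to $\partial\Omega$, so we need Lemma \ref{lema.ricardo} (which requires strong convergence in $W^{t,p}$ with $t>\nicefrac1p$, exactly what Theorem \ref{BBM} delivers) to guarantee that the normalization does not degenerate in the limit — i.e.\ that the limit $u$ really has $\|u\|_{L^p(\partial\Omega)}=1$ rather than trace zero. A secondary technical point is that Proposition \ref{prop:BPS} controls only the Gagliardo seminorm $|u_k|_{W^{s_k,p}(\Omega)}$, not the full $[u_k]_{s_k,p}$ which also integrates over $\Omega\times\Omega^c$; this is harmless since we only need the inequality $[u_k]_{s_k,p}^p\ge |u_k|_{W^{s_k,p}(\Omega)}^p$ for the lower bound, but it must be stated carefully.
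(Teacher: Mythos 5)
Your proposal is correct and follows essentially the same route as the paper: normalize the eigenfunctions on the thin strip, use Corollary \ref{limsup} for the uniform bounds, Theorem \ref{BBM} for compactness, Proposition \ref{prop:BPS} for lower semicontinuity of the gradient term, and Lemma \ref{lema.ricardo} to pass to the limit in the denominator. Your explicit remark that only the inequality $[u_k]_{s_k,p}^p\ge |u_k|_{W^{s_k,p}(\Omega)}^p$ is needed to reconcile the full seminorm with the Gagliardo seminorm is a point the paper glosses over, and is worth keeping.
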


	\begin{proof}
		Let $\{s_k\}_{k\in \N}$ be a sequence in $(0,1)$ such that 
		$s_k\to 1^-$ as $k\to\infty$ and
		$$
			\lim_{k\to \infty} 	\lam_{1}(s_k,p)= \liminf_{s\to 1^-} 	\lam_{1}(s,p).
		$$
		For $k\in\N$, let $u_k$ be the eigenfunctions of problem \eqref{ecu} with $s=s_k$ and $\lam=\lam_1(s_k,p)$ normalized such that 
		$$
			\frac{1}{1-s_k}\int_{\Omega_{1-s_k}}|u_k|^p \, dx =1.
		$$
		Moreover, by Corollary \ref{limsup}, there is a positive
		constant $C$ such that 
		\[
			\|u_k\|_{L^p(\Omega)}\le C \quad \mbox{ and }
			\quad (1-s_k)|u_k|_{W^{s,p}(\Omega)}<C \quad
			\forall k\in\N.
		\]
		Then, by Theorem \ref{BBM}, 
		up to a subsequence, $\{u_k\}$ converges in $\lp$
		(and, in fact, in $W^{s_0,p}(\Omega)$ for all $s_0\in(0,1)$)
		to some $u\in W^{1,p}(\Omega).$ Thus, by Proposition \ref{prop:BPS}
		and Lemma \ref{lema.ricardo}, we get
		\[
		\|\nabla u\|_{\lp}^p\le \lim_{k\to \infty}\mathcal{K}_{n,p}
		(1-s_k)|u_{s_k}|_{W^{s_k,p}(\Omega)}^p
		\]
		and 
		\[
			\lim_{k\to\infty}
			\dfrac{1}{1-s_k}\int_{\Omega_{1-s_k}}|u_k(x)|^p dx
			=\int_{\partial\Omega} |u|^p dS.
		\]
		Then
		$\|u\|_{L^p(\partial\Omega)}^p=1$ and
		\begin{align*}
				\|\nabla u\|_{\lp}^p+\|u\|_{\lp}^p
				&\le  \lim_{k\to \infty}\mathcal{K}_{n,p}
				(1-s_k)|u_{s_k}|_{W^{s_k,p}(\Omega)}^p
				+\|u_k\|_{\lp}^p\\
				&\le\lim_{k\to\infty}\lambda_1(s_k,p)
				=\liminf_{s\to1^-}\lambda_1(s,p).	
		\end{align*}
		Therefore 
		\[
				\lambda_1(p)\le\liminf_{s\to1^-}\lambda_1(s,p).
		\]
	\end{proof}

\end{document}